\pdfoutput=1
\documentclass[11pt]{amsart}
\usepackage[a4paper,margin=1.1in]{geometry}
\usepackage{amsmath,amssymb,amsthm,mathtools}
\usepackage{booktabs}
\usepackage{enumitem}
\usepackage{tikz}
\usetikzlibrary{arrows.meta,positioning,calc}
\usepackage[colorlinks=true,linkcolor=blue!50!black,citecolor=blue!50!black,urlcolor=blue!50!black]{hyperref}

\numberwithin{equation}{section}
\newtheorem{theorem}{Theorem}[section]
\newtheorem{proposition}[theorem]{Proposition}
\newtheorem{lemma}[theorem]{Lemma}
\newtheorem{corollary}[theorem]{Corollary}

\newtheorem{mainq}{Question}

\newtheorem*{claim*}{Main claim}
\theoremstyle{definition}
\newtheorem{definition}[theorem]{Definition}
\newtheorem{example}[theorem]{Example}
\theoremstyle{remark}
\newtheorem{remark}[theorem]{Remark}

\newcommand{\C}{\mathbb{C}}
\newcommand{\Z}{\mathbb{Z}}
\newcommand{\Q}{\mathbb{Q}}
\newcommand{\R}{\mathbb{R}}
\newcommand{\N}{\mathbb{N}}
\newcommand{\A}{\mathbb{A}}
\newcommand{\PP}{\mathbb{P}}
\newcommand{\OO}{\mathcal{O}}
\newcommand{\cA}{\mathcal{A}}
\newcommand{\cE}{\mathcal{E}}
\newcommand{\cF}{\mathcal{F}}
\newcommand{\cG}{\mathcal{G}}
\newcommand{\cI}{\mathcal{I}}
\newcommand{\cL}{\mathcal{L}}
\newcommand{\cQ}{\mathcal{Q}}
\newcommand{\cK}{\mathcal{K}}
\newcommand{\cV}{\mathcal{V}}
\newcommand{\m}{\mathfrak{m}}
\newcommand{\Rh}{\widehat{R}}
\DeclareMathOperator{\rk}{rk}
\DeclareMathOperator{\depth}{depth}
\DeclareMathOperator{\Spec}{Spec}
\DeclareMathOperator{\Proj}{Proj}
\DeclareMathOperator{\ch}{ch}
\DeclareMathOperator{\Tot}{Tot}
\DeclareMathOperator{\Bl}{Bl}
\DeclareMathOperator{\NS}{NS}
\DeclareMathOperator{\Sing}{Sing}
\DeclareMathOperator{\uc}{uc}
\DeclareMathOperator{\Tor}{Tor}
\newcommand{\iso}{\cong}

\expandafter\def\expandafter\UrlBreaks\expandafter{\UrlBreaks\do\-}
\begin{document}

\title[Counterexamples to the ESW problem and Hochster's conjecture]{The recent counterexamples to the Eisenbud--Schreyer--Weyman Ulrich problem (2003) and to Hochster's small Cohen--Macaulay conjecture (1973): a survey, with a generalization to arbitrary dimension}
\hypersetup{pdftitle={The recent counterexamples to the Eisenbud-Schreyer-Weyman Ulrich problem (2003) and to Hochster's small Cohen-Macaulay conjecture (1973): a survey, with a generalization to arbitrary dimension},pdfauthor={Cristian Anghel}}

\author{Cristian Anghel}
\address{Simion Stoilow Institute of Mathematics of the Romanian Academy, 21 Calea Grivi\c{t}ei, 010702 Bucharest, Romania}
\email{cristian.anghel@imar.ro}

\date{October 2026}

\subjclass[2020]{Primary 14J60, 13C14; Secondary 14J29, 13D45}
\keywords{Ulrich bundles, arithmetically Cohen--Macaulay bundles, maximal Cohen--Macaulay modules, small Cohen--Macaulay conjecture, Bogomolov inequality, signature, Hirzebruch--Kummer covers}

\begin{abstract}
In September 2026 four preprints and an AI-generated manuscript moved one numerical obstruction from projective geometry to local algebra. For a smooth complex projective surface $Y$ with an ample, globally generated divisor $H$ such that $H^2<\sigma(Y)$, the signature, one obtains in turn: no Ulrich bundles when $H$ is very ample (Anghel, arXiv:2609.02718; extended to every dimension by Lopez, arXiv:2609.06678), answering negatively the Eisenbud--Schreyer--Weyman question of 2003 for sheaves supported on the whole variety; no arithmetically Cohen--Macaulay bundles, i.e.\ section rings without graded maximal Cohen--Macaulay (MCM) modules in characteristic zero (Anghel, arXiv:2609.15589); and complete local domains without small Cohen--Macaulay modules, counterexamples to Hochster's 1973 conjecture, if $K_Y\equiv4H$ (Chen, arXiv:2609.24142, proposed) or $\sigma>2H^2$ (OpenAI, released 6 October). We recall the definitions and main statements, and add two things. First, through a finite projection to $\PP^2$ and an identity already present in the OpenAI manuscript, the three obstructions become one variance identity; for surfaces this explains the factor $2$ between the graded bound $\sigma\le H^2$ and the local $\sigma\le2H^2$. Second, we generalize the argument of the OpenAI manuscript to every dimension $m\ge2$, with complete proofs: if $X$ is a smooth complex projective variety of dimension $m$ and $H$ an ample, globally generated divisor, a small Cohen--Macaulay module over the completed section ring of $(X,H)$ forces $(K_X^2-2c_2(X))\cdot H^{m-2}\le(m+4)H^m$. Consequently the completed section rings of all Hesse pairs, including the ball quotient of the first preprint, have no small Cohen--Macaulay module, and products of Hesse surfaces with projective spaces give complete local domains with an isolated singularity, of every dimension $\ge3$, without small Cohen--Macaulay modules.
\end{abstract}

\maketitle

\setcounter{tocdepth}{1}
\tableofcontents
\section{Introduction}\label{sec:intro}

This survey tells a short story. In three weeks of September 2026 a numerical obstruction for vector bundles on polarized surfaces was pushed, one step at a time, from projective geometry to local commutative algebra. The central steps are recorded in three preprints and an AI-generated manuscript; a further preprint, [Lo], is a side branch towards higher dimension:
\begin{itemize}[leftmargin=2.2em]
	\item[{[An1]}] 2 September: smooth polarized surfaces with no Ulrich bundle \cite{An1};
	\item[{[Lo]}] 6 September (v3: 10 September): smooth polarized varieties of every dimension $\ge2$ with no Ulrich bundle \cite{Lo}; used in \cite[\S5]{An2}, not in the local arguments;
	\item[{[An2]}] 14 September: no arithmetically Cohen--Macaulay (aCM) bundles, hence section rings with no graded maximal Cohen--Macaulay (MCM) module \cite{An2};
	\item[{[Ch]}] 21 September: a proposed local cohomology obstruction for arbitrary modules over the completed section ring, under $K\equiv4H$ \cite{Ch};
	\item[{[OAI]}] 23 September (date of the manuscript): a complete local domain without a small Cohen--Macaulay module \cite{OAI}, published on 6 October in OpenAI's catalogue of mathematical manuscripts \cite{OAIrepo}.
\end{itemize}
Figure~\ref{fig:timeline} places them on a timeline, and Figure~\ref{fig:citations} records the citations among them (details in Section~\ref{sec:status}).

\begin{figure}[ht]
\centering
\begin{tikzpicture}[x=0.36cm,
	ev/.style={align=center,font=\scriptsize,text width=2.5cm,inner sep=1pt}]
	\draw[-{Stealth[length=2mm]},thick] (-0.6,0) -- (36.4,0);
	\foreach \d in {0,4,12,19,21,34} {\draw[thick] (\d,0.18) -- (\d,-0.18);}
	\node[ev,anchor=south] at (0,0.3) {\textbf{2 Sep} [An1]\\ no Ulrich bundles};
	\node[ev,anchor=north] at (4,-0.3) {\textbf{6 Sep} [Lo]\\ every dimension};
	\node[ev,anchor=south] at (12,0.3) {\textbf{14 Sep} [An2]\\ no aCM bundles, no graded MCM};
	\node[ev,anchor=north] at (19,-0.3) {\textbf{21 Sep} [Ch]\\ local, proposed ($K\equiv4H$)};
	\node[ev,anchor=south] at (21,0.3) {\textbf{23 Sep} [OAI]\\ local domain, no small CM};
	\node[ev,anchor=north east] at (35.5,-0.3) {\textbf{6 Oct}\\ OpenAI catalogue\\ (family 195)};
\end{tikzpicture}
\caption{The five texts (September--October 2026).}\label{fig:timeline}
\end{figure}

\begin{figure}[ht]
\centering
\begin{tikzpicture}[>=Stealth,x=1.25cm,y=1.1cm,
	paper/.style={draw,rounded corners=2pt,align=center,font=\small,inner sep=3pt}]
	\node[paper] (A1) at (0,0) {[An1]\\ 2 Sep};
	\node[paper,dashed] (L) at (1.6,0.95) {[Lo]\\ 6 Sep};
	\node[paper] (A2) at (1.6,-0.95) {[An2]\\ 14 Sep};
	\node[paper] (C) at (3.2,-0.95) {[Ch]\\ 21 Sep};
	\node[paper] (O) at (4.8,0) {[OAI]\\ 23 Sep};
	\draw[->,dashed] (A1) -- (L); \draw[->,dashed] (L) -- (A2);
	\draw[->,very thick] (A1) -- (A2); \draw[->,very thick] (A2) -- (C); \draw[->,very thick] (C) -- (O); \draw[->,very thick] (A2) -- (O);
	\draw[->,very thick] (A1.north) -- ++(0,1.25) -| (O.north);
	\node[paper,fill=black!6] (T) at (7.3,0) {\textbf{This paper}\\ higher-dimensional\\ counterexamples\\ to Hochster};
	\draw[->,dotted,thick] (O) -- (T);
	\draw[->,dotted,thick,preaction={draw,white,line width=3pt,-}] (L.north) -- ++(0,0.85) -| (T.north);
	\draw[->,dotted,thick] (A2.south) -- ++(0,-0.5) -| ([xshift=-4mm]T.south);
	\draw[->,dotted,thick] (A1.south) -- ++(0,-1.75) -| ([xshift=4mm]T.south);
\end{tikzpicture}
\caption{Citations among the five texts (an arrow points from a text to one that uses or cites it). The central chain is drawn thick; [Lo] (dashed) is a side branch towards higher dimension. Dotted arrows lead from the texts on which Section~\ref{sec:highdim} builds to this paper's higher-dimensional counterexamples to Hochster's conjecture: the argument of [OAI], the examples of [An1], the product construction of [An2], and the higher-dimensional Ulrich inequality of [Lo], which Section~\ref{sec:highdim} recovers.}\label{fig:citations}
\end{figure}

Three existence questions sit behind the story, and they form a tower (Figure~\ref{fig:ladder}). For a smooth projective surface $Y$ with a polarization $H$, let $R=R(Y,H)=\bigoplus_{t\ge0}H^0(Y,\OO_Y(tH))$ be the section ring and $\Rh$ its completion at the vertex. The three questions ask for the existence of
\begin{enumerate}[label=(R\arabic*),leftmargin=3em]
	\item an \emph{Ulrich bundle} on $(Y,H)$ --- the question of Eisenbud, Schreyer and Weyman \cite{ESW};
	\item an \emph{aCM bundle} on $(Y,H)$, equivalently a graded MCM module over $R$ --- the question Eisenbud and Schreyer asked after \cite{An1};
	\item a \emph{small Cohen--Macaulay module} over $\Rh$ --- Hochster's conjecture \cite{Ho73,Ho75,Ho17}.
\end{enumerate}
Existence propagates downwards: (R1)$\Rightarrow$(R2)$\Rightarrow$(R3) (Lemma~\ref{lem:ladder}). Non-existence propagates upwards, so each step of the story excludes a further level (\cite{Lo} extends the first level to higher dimension).

\begin{figure}[ht]
\centering
\begin{tikzpicture}[node distance=8mm,>=Stealth,
	rung/.style={draw,rounded corners=2pt,align=center,inner sep=4pt,font=\small,text width=7.3cm},
	tag/.style={font=\scriptsize,align=left}]
	\node[rung] (U) {(R1) Ulrich bundle on $(Y,H)$};
	\node[rung,below=of U] (A) {(R2) aCM bundle on $(Y,H)$ $\Longleftrightarrow$ graded MCM $R$-module};
	\node[rung,below=of A] (L) {(R3) small Cohen--Macaulay module over $\Rh$};
	\draw[->,thick] (U) -- (A) node[midway,left,font=\scriptsize]{Lemma \ref{lem:ladder}(a)};
	\draw[->,thick] (A) -- (L) node[midway,left,font=\scriptsize]{Lemma \ref{lem:ladder}(b)};
	\node[tag,right=3mm of U] {excluded by [An1], [Lo]};
	\node[tag,right=3mm of A] {excluded by [An2]};
	\node[tag,right=3mm of L] {excluded by [Ch] (proposed), [OAI]};
\end{tikzpicture}
\caption{The tower of existence questions.}\label{fig:ladder}
\end{figure}

All obstructions compare the \emph{degree} and the \emph{signature},
\[
d=H^2,\qquad \sigma=\sigma(Y)=K_Y^2-8\chi(\OO_Y)=\tfrac13\bigl(K_Y^2-2c_2(Y)\bigr).
\]
Table~\ref{tab:summary} compares the numerical criteria, including Chen's proposed one.

\begin{table}[ht]
\centering
\small
\renewcommand{\arraystretch}{1.15}
\setlength{\tabcolsep}{3pt}
\begin{tabular}{@{}llll@{}}
	\toprule
	& object excluded & numerical criterion & hypotheses on $H$\\
	\midrule
	{[An1]} & Ulrich bundles & $H^2<\sigma$ & $H$ very ample\\
	{[Lo]} & Ulrich bundles, $\dim X=m$ & $(m+1)H^m<(K^2-2c_2)\cdot H^{m-2}$ & $H$ very ample\\
	{[An2]} & aCM bundles / graded MCM & $H^2<\sigma$ & $H$ ample, bpf\\
	{[Ch]} & MCM over $\Rh$ (proposed) & $H^2<\sigma$ & $H$ ample, bpf, $K\equiv4H$\\
	{[OAI]} & MCM over $\Rh$ & $2H^2<\sigma$ & $H$ ample, bpf\\
	Thm.~\ref{thm:local-m} & MCM over $\Rh$, $\dim X=m$ & $(m+4)H^m<(K^2-2c_2)\cdot H^{m-2}$ & $H$ ample, bpf\\
	\bottomrule
\end{tabular}
\caption{The five results, and the extension of \cite{OAI} proved in Section~\ref{sec:highdim} (bpf $=$ base-point-free).}\label{tab:summary}
\end{table}

Both local arguments take their examples from the complete-quadrangle family $(Y_n,3A_n-E_n)$ (Section~\ref{sec:kummer}): Hirzebruch's surfaces with the polarization $3A_n-E_n$ of \cite[\S7.2]{An1}, whose degree lies below the signature for $n\ge6$. \cite{OAI} takes the construction from \cite[\S7.2]{An1} and uses the prime $n=7$, and \cite{Ch} takes $n=6$ from \cite[Rem.~4.2, Cor.~4.3]{An2}, where its section ring is shown to be quasi-Gorenstein. For both values of $n$, \cite{An2} had already shown that the section ring has no graded MCM module. The Hesse pairs, the main examples of \cite{An1}, are used by neither local argument; see Corollary~\ref{cor:hesse-local}.

\subsection*{What this survey adds}
Besides a unified exposition, the survey makes two observations explicit and proves one extension.
\begin{enumerate}[label=(\alph*),leftmargin=2em]
	\item Through a finite map $f\colon Y\to\PP^2$ with $f^*\OO(1)=\OO(H)$, the three obstructions are one computation: a \emph{variance} of the push-forward, decomposed along a filtration with semistable factors (Proposition~\ref{prop:synthesis}). This identity keeps all terms in \cite[(10), (11)]{OAI} and contains \cite[Prop.~2.6]{An2}. What changes from level to level is what we know about the push-forward: trivial, split, or the restriction to an exceptional plane of a sheaf that is trivial off it. For surfaces this explains the factor $2$ between $\sigma\le d$ (graded) and $\sigma\le 2d$ (local, \cite{OAI}); Chen's argument fits the same computation, with a stronger lower bound for the variance (Remark~\ref{rem:chen-v}).
	\item By \cite[Thm.~1.2]{OAI} (proved in Section~\ref{sec:highdim}), the completed section rings of the Hesse pairs $(Y_n,4A_n-E_n)$, $n\ge3$, of the dual Hesse pairs $(Y_n,6A_n-E_n)$, $n\ge6$, and of the complete-quadrangle pairs $(Y_n,3A_n-E_n)$, $n\ge7$, have no small CM module (Corollary~\ref{cor:hesse-local}); this includes the ball quotient $Y_3$ of \cite{An1}.
	\item We generalize the argument of \cite{OAI} to every dimension $m\ge2$ (Theorem~\ref{thm:local-m}, with complete proof): for $X$ smooth projective of dimension $m$ and $H$ ample and globally generated, a small CM module over $\Rh(X,H)$ forces $(K_X^2-2c_2(X))\cdot H^{m-2}\le(m+4)H^m$, compared with the bound $(m+1)H^m$ for Ulrich and aCM bundles. With $X=Y_n\times\PP^{m-2}$, the products used in \cite[Cor.~5.3]{An2} for the graded statement, this gives complete normal local domains with an isolated singularity, of every dimension $\ge3$, without small CM modules (Corollaries~\ref{cor:products} and~\ref{cor:alldim}). Adjoining power series variables, the local analogue of a remark in \cite[\S5]{An2}, gives such domains with non-isolated singularities (Corollary~\ref{cor:powerseries}). In characteristic $p$ the completed section rings of smooth varieties always have small CM modules, by Hartshorne--Hochster--Peskine--Szpiro and Shimomoto--Tavanfar, so the characteristic hypothesis in these results is essential even locally (Remark~\ref{rem:charp}).
\end{enumerate}

\subsection*{Organization}
Sections~\ref{sec:questions}--\ref{sec:oai} recall definitions and statements, with at most brief indications of proof: the three questions and the tower (Section~\ref{sec:questions}), the Ulrich obstruction (Section~\ref{sec:ulrich}), the examples (Section~\ref{sec:kummer}), the aCM obstruction (Section~\ref{sec:acm}), the local setting (Section~\ref{sec:local}) and the two local arguments (Sections~\ref{sec:chen} and~\ref{sec:oai}). Sections~\ref{sec:synthesis} and~\ref{sec:highdim} contain the new material, with complete proofs: a single variance identity on $\PP^2$ from which the three obstructions are read off (Section~\ref{sec:synthesis}), and the extension of the local obstruction to every dimension (Section~\ref{sec:highdim}). Section~\ref{sec:status} records provenance and status, and Section~\ref{sec:questions-open} open questions.

\subsection*{Conventions}
We work over $\C$. Surfaces are smooth, connected and projective. For a coherent sheaf $\cF$ and a divisor $H$, $\cF(t)=\cF\otimes\OO(tH)$. For a torsion-free sheaf $\cG$ of rank $\rho$ on a polarized surface $(Y,H)$ we use the slope $\mu_H(\cG)=c_1(\cG)\cdot H/\rho$ and the normalized slope $\mu(\cG)=\mu_H(\cG)/H^2$; the discriminant is $\Delta(\cG)=2\rho c_2-(\rho-1)c_1^2$. Bogomolov's inequality says $\Delta\ge0$ for $\mu_H$-semistable torsion-free sheaves \cite[Thm.~3.4.1]{HL}.

\section{Three existence questions}\label{sec:questions}

This section states the three questions Q1--Q3 behind the levels (R1)--(R3) of the tower (Figure~\ref{fig:ladder}) and the implications between them.

\subsection{Ulrich bundles}
Let $X\subset\PP^N$ be a smooth projective variety of dimension $m$ and $H=\OO_X(1)$.

\begin{definition}
A vector bundle $\cE$ on $X$ is \emph{Ulrich} if $H^i\bigl(X,\cE(-p)\bigr)=0$ for all $i$ and all $1\le p\le m$.
\end{definition}

\begin{proposition}[{\cite[Prop.~2.1, Cor.~2.2]{ESW}}]\label{prop:ESW}
For a vector bundle $\cE$ on $X$ the following are equivalent:
\begin{enumerate}[label=(\roman*)]
	\item $\cE$ is Ulrich;
	\item for some (equivalently, every) finite linear projection $\pi\colon X\to\PP^m$, the sheaf $\pi_*\cE$ is trivial, $\pi_*\cE\iso\OO_{\PP^m}^{\oplus\rk\cE\cdot\deg X}$;
	\item the module $\Gamma_*(\cE)=\bigoplus_tH^0(X,\cE(t))$ is Cohen--Macaulay of dimension $m+1$ and has a linear minimal free resolution over $S=\C[x_0,\dots,x_N]$, generated in degree $0$.
\end{enumerate}
On a surface, $\cE$ is Ulrich if and only if it is aCM (Definition~\ref{def:acm}) and $\chi(\cE(t))=\frac12\rk\cE\cdot H^2\,(t+1)(t+2)$.
\end{proposition}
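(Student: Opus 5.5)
The plan is to prove (i)$\Leftrightarrow$(ii)$\Leftrightarrow$(iii) by passing through the finite projection, and then to get the surface statement from a Hilbert-polynomial computation.

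\emph{(i)$\Leftrightarrow$(ii).} Fix a finite linear projection $\pi\colon X\to\PP^m$, given by $m+1$ general linear forms. Since $\pi$ is finite and $\pi^*\OO_{\PP^m}(1)=\OO_X(1)$, the projection formula gives
\[
H^i\bigl(X,\cE(t)\bigr)\iso H^i\bigl(\PP^m,(\pi_*\cE)(t)\bigr)\quad\text{for all } i,t.
\]
Because $X$ is smooth and $\pi$ is flat over the smooth base (miracle flatness), $\cF=\pi_*\cE$ is locally free, of rank $\rk\cE\cdot\deg X$. The Ulrich condition then says $H^i(\PP^m,\cF(-p))=0$ for all $i$ and all $1\le p\le m$. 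I would show that this forces $\cF$ to be trivial, using Beilinson's spectral sequence with terms $E_1^{p,q}=H^q(\cF(p))\otimes\Omega^{-p}(-p)$, $-m\le p\le0$. Every column with $p\ne0$ vanishes. In the column $p=0$ the spectral sequence must converge to $\cF$ in total degree $0$, so $H^q(\cF)=0$ for $q>0$ and
\[
\cF\iso H^0(\cF)\otimes\OO_{\PP^m}.
\]
The converse is immediate, since $\OO_{\PP^m}(-p)$ has no cohomology for $1\le p\le m$. This argument works for any such projection, which gives the ``some/every'' clause.

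\emph{(ii)$\Leftrightarrow$(iii).} I would take $\Gamma_*(\cE)$ as a module over $A=\C[y_0,\dots,y_m]$, the coordinate ring of $\PP^m$. Then $\Gamma_*(\cE)=\Gamma_*(\pi_*\cE)$. If $\pi_*\cE$ is trivial, this module is $A^{r}$ with $r=\rk\cE\cdot\deg X$, so it is free over $A$ and generated in degree $0$. Hence it is Cohen--Macaulay of dimension $m+1$ as an $S$-module. Moreover its Hilbert function is $r\binom{t+m}{m}$, and its $S$-resolution is linear. I would prove the linearity by computing $\Tor^S_i(M,\C)$ via a Koszul complex on the remaining $N-m$ variables, after reducing modulo the regular sequence $y_0,\dots,y_m$. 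The reduction is an $r$-dimensional vector space concentrated in degree $0$, which is killed by the linear forms. Conversely, a linear resolution generated in degree $0$ together with the Cohen--Macaulay property forces the following: the Artinian reduction modulo a linear system of parameters sits in degree $0$. By graded Nakayama, $M$ is then free over $A$ with generators in degree $0$, and sheafifying gives (ii).

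\emph{Surface statement.} If $\cE$ is Ulrich, then $\pi_*\cE$ is trivial. This has no intermediate cohomology in any twist, so $\cE$ is aCM, and $\chi(\cE(t))=r\binom{t+2}{2}$ with $r=\rk\cE\cdot H^2$. Conversely, suppose $\cE$ is aCM. Then $H^1(\cE(t))=0$ for all $t$. The given form of $\chi$ makes $\chi(\cE(-1))=\chi(\cE(-2))=0$, so for $p=1,2$ it suffices to show that $H^0$ and $H^2$ vanish. I expect this to be the main obstacle. The plan is to use again that $\cF=\pi_*\cE$ is aCM on $\PP^2$. By Horrocks' criterion $\cF$ splits as $\bigoplus\OO(a_j)$, with $\sum\binom{t+2+a_j}{2}=r\binom{t+2}{2}$ as polynomials in $t$. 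Comparing coefficients gives $\sum a_j=0$ and $\sum a_j^2=0$, so every $a_j=0$. Hence $\cF$ is trivial and $\cE$ is Ulrich by (ii).
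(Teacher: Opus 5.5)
Your proof is correct: the paper itself gives no argument for this proposition and simply cites \cite[Prop.~2.1, Cor.~2.2]{ESW}, and your route is the standard one behind that citation. That route is: cohomology is preserved under the finite linear projection, Beilinson's spectral sequence gives $\pi_*\cE\iso\OO^{r}$, and reduction modulo the $M$-regular sequence $y_0,\dots,y_m$ reads off linearity of the $S$-resolution. The one step you only assert, that an Artinian module with a linear resolution generated in degree $0$ lives in degree $0$, holds because its top-degree piece lies in the socle, which is detected by $\Tor_{N-m}$. Your treatment of the surface case, splitting $\pi_*\cE\iso\bigoplus_j\OO(a_j)$ by Horrocks and deducing $\sum_ja_j=\sum_ja_j^2=0$ from the Hilbert polynomial, is exactly the mechanism the paper uses later (Lemma~\ref{lem:horrocks}, and the case $v=v_a=0$ in Table~\ref{tab:synthesis}).
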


\begin{mainq}[Eisenbud--Schreyer--Weyman, 2003 {\cite[p.~543]{ESW}}]\label{q:ESW}
Does every embedded projective variety $X\subset\PP^N$ carry an Ulrich sheaf?
\end{mainq}

As in \cite[\S1]{An1}, following the restatement of Hanselka and Kummer \cite[Problem~4.3]{HK}, the sheaf is required to have support $X$; on smooth surfaces such sheaves are locally free \cite[Lemma~9.1]{An1}. Since $(X,\ell H)$ carries Ulrich bundles of rank two for $\ell\gg0$, for every smooth surface $X$ and ample $H$ \cite[Thm.~1.2]{CoHu}, Question~\ref{q:ESW} concerns a \emph{fixed} embedding. Following \cite{BES}, $\uc(X,H)$ denotes the smallest rank of an Ulrich bundle, and $\uc=\infty$ if there is none.

\subsection{aCM bundles and graded MCM modules}
Let $Y$ be a smooth surface and $H$ a divisor such that $|H|$ is base-point-free and $\varphi_H$ is finite; equivalently, $H$ is ample and globally generated.

\begin{definition}\label{def:acm}
A vector bundle $\cE$ on $Y$ is \emph{arithmetically Cohen--Macaulay} (aCM) with respect to $H$ if $H^1(Y,\cE(t))=0$ for every $t\in\Z$.
\end{definition}

Let $R=R(Y,H)=\bigoplus_{t\ge0}H^0\bigl(Y,\OO_Y(tH)\bigr)$ and $\m=R_+$. By \cite[Lemma~1.2]{An2}, $R$ is a finitely generated normal graded $\C$-domain of dimension $3$ with $\Proj R=Y$, and its punctured spectrum is smooth.

\begin{lemma}[{\cite[Lemma~3.2]{An2}}]\label{lem:acm-mcm}
The assignments $M\mapsto\widetilde M$ and $\cE\mapsto\Gamma_*(\cE)=\bigoplus_{t\in\Z}H^0(Y,\cE(t))$ are mutually inverse bijections between graded MCM $R$-modules and aCM bundles of positive rank on $(Y,H)$.
\end{lemma}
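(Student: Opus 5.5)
The plan is the standard dictionary between graded modules over $R$ and sheaves on $Y=\Proj R$, combined with the local-cohomology characterization of depth. I will use two facts about $R$ from \cite[Lemma~1.2]{An2}: $R$ is a normal graded domain of dimension $3$, and its punctured spectrum is smooth. Write $U=\Spec R\setminus\{\m\}$. For a finitely generated graded $R$-module $M$ there is the exact sequence
\[
0\to H^0_\m(M)\to M\to \bigoplus_{t\in\Z}H^0(Y,\widetilde M(t))\to H^1_\m(M)\to 0,
\]
together with the isomorphisms $H^{i+1}_\m(M)\iso\bigoplus_t H^i(Y,\widetilde M(t))$ for $i\ge1$. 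Since $H$ is only globally generated and ample, not necessarily very ample, $R$ need not be generated in degree one. I will therefore check that these identifications still hold for $\widetilde M$ defined via $\Proj R$ with $\OO_Y(t)=\OO_Y(tH)$. This is the point where some care is needed: one must use that $\OO_Y(tH)$ is the sheaf $\widetilde{R(t)}$ for every $t$, which holds because $R$ is the full section ring of $H$. I would verify it via the $\C^*$-bundle $U\to Y$, namely $U=\Spec_Y\bigoplus_{t\in\Z}\OO_Y(tH)$ minus the zero section. This description is valid precisely because $R$ is the section ring of an ample line bundle, even if that bundle is not very ample.

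\emph{From MCM modules to aCM bundles.} Let $M$ be a graded MCM module, so $\depth M=3$ and $M\neq0$. Then $H^0_\m(M)=H^1_\m(M)=H^2_\m(M)=0$, so $M\iso\Gamma_*(\widetilde M)$ and $H^1(Y,\widetilde M(t))=0$ for all $t$. It remains to see that $\widetilde M$ is locally free of positive rank. The sheaf $\widetilde M$ is the descent of $M|_U$ along the smooth $\C^*$-bundle $U\to Y$. At each point $\mathfrak p\in U$, $M_{\mathfrak p}$ is MCM over the regular local ring $R_{\mathfrak p}$, because localization preserves the MCM property (depth equals dimension at every prime in the support, with $\dim R_{\mathfrak p}+\dim R/\mathfrak p=3$ for the catenary domain $R$). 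By Auslander--Buchsbaum $M_{\mathfrak p}$ is therefore free. Full support (MCM over a domain, so $\operatorname{Supp}M=\Spec R$) gives positive rank. Freeness descends along the faithfully flat map $U\to Y$, so $\widetilde M$ is a vector bundle, and it is aCM.

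\emph{From aCM bundles to MCM modules.} Let $\cE$ be an aCM bundle of positive rank, and set $M=\Gamma_*(\cE)$. The first step is finite generation. Each graded piece of $M$ is finite-dimensional, and $M_t=0$ for $t\ll0$ because $\cE^\vee(-t)$ has no sections by Serre vanishing and duality. Generation in high degrees comes from Castelnuovo--Mumford-type surjectivity $H^0(\cE(t))\otimes H^0(\OO(kH))\to H^0(\cE(t+k))$ for a suitable $k$ with $kH$ very ample and $t\gg0$. Alternatively, I would identify $M$ with $H^0(U,\pi^*\cE)$, which is finite because $R$ is normal of dimension $\ge2$ and $M$ is the reflexive hull of a finite module. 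Next, $M$ is $\m$-torsion-free and satisfies $\Gamma_*(\widetilde M)=M$, so $H^0_\m=H^1_\m=0$. Finally $H^2_\m(M)=\bigoplus_t H^1(Y,\cE(t))=0$ by the aCM hypothesis, so $\depth M\ge3$ and $M$ is MCM. The two constructions are inverse to each other: $\widetilde{\Gamma_*(\cE)}=\cE$ holds in general, and $\Gamma_*(\widetilde M)=M$ was shown above.

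I expect the main obstacle to be the non-standard grading, that is, justifying the local cohomology and $\Gamma_*$ dictionary and finite generation when $R$ is not generated in degree $1$. I would handle both uniformly through the $\C^*$-bundle $U\to Y$, whose ring of functions graded by weight is exactly $R$.
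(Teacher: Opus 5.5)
Your proposal is correct and follows essentially the route this cited lemma rests on, using the ingredients the paper uses elsewhere: the identification of the punctured spectrum with $\operatorname{Spec}_Y\bigoplus_{t\in\Z}\OO_Y(tH)$ (that is, $\operatorname{Spec}_Y\bigoplus_{t\ge0}\OO_Y(tH)$ minus its zero section, as in the proof of Proposition~\ref{prop:model-m}(b)), the local cohomology dictionary, freeness of MCM modules over the regular local rings of $U$, and finite generation via reflexive hulls. Two points need stating more carefully: $\depth_\m M=3$ controls $M_{\mathfrak p}$ directly only for $\mathfrak p\subseteq\m$, so for the primes of $U$ not contained in $\m$ note that the non-free locus of $M|_U$ is cut out by a homogeneous Fitting ideal of $M$, and hence it suffices to check freeness at homogeneous primes $\mathfrak p\ne\m$, all of which lie in $\m$; and the vanishing $M_t=0$ for $t\ll0$ is $H^0(\cE(t))\iso H^2(\cE^\vee\otimes\omega_Y(-t))^\vee=0$ by Serre duality and Serre vanishing, not a statement about sections of $\cE^\vee(-t)$.
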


\begin{mainq}[Eisenbud--Schreyer, after \cite{An1}; see {\cite[Acknowledgements]{An2}}]\label{q:ES}
Do the polarized surfaces $(Y,H)$ constructed in \cite{An1} carry an aCM bundle; equivalently (Lemma~\ref{lem:acm-mcm}), do their section rings have a graded MCM module?
\end{mainq}
They also asked for the Boij--S\"oderberg cone of cohomology tables of aCM bundles on these surfaces. For $H^2<\sigma(Y)$, the cone spanned by the cohomology tables of vector bundles on $(Y,H)$ meets the tables with vanishing $h^1$-row only in $0$ \cite[Rem.~6.2]{An2}; see item~\ref{oq:bs} of Section~\ref{sec:questions-open}.

\subsection{Small Cohen--Macaulay modules}
Let $(R,\m)$ be a Noetherian local ring. A finitely generated $R$-module $M\ne0$ with $\depth_RM=\dim R$ is called a \emph{small} (or \emph{maximal}) \emph{Cohen--Macaulay module}.

\begin{mainq}[Hochster, 1973 {\cite{Ho73,Ho75}}]\label{conj:hochster}
Does every complete Noetherian local domain have a small Cohen--Macaulay module?
\end{mainq}
Hochster's \emph{small Cohen--Macaulay conjecture} predicts a positive answer. It holds in dimension $\le2$. In dimension three and characteristic $p>0$, every finitely generated $\N$-graded domain $R$ with $R_0$ a perfect field has a graded MCM module (Hartshorne, Hochster, Peskine--Szpiro \cite{Ho04,PS}), and the conjecture is known for further classes, such as three-dimensional complete weakly $F$-split rings \cite{Sch17}. In any characteristic there are existence criteria in terms of deficiency modules \cite[Thm.~3.2]{ShTa}, \cite{Xie1,Xie2}. In characteristic zero the conjecture was open from dimension three on, even for localizations of affine domains at maximal ideals \cite[\S1.3]{BHM}. The main motivation is Serre's positivity of intersection multiplicities \cite{Ho75}.

\subsection{The tower}

\begin{lemma}\label{lem:ladder}
Let $(Y,H)$ be as above and $\Rh$ the $\m$-adic completion of $R=R(Y,H)$.
\begin{enumerate}[label=(\alph*)]
	\item If $H$ is very ample, every Ulrich bundle on $(Y,H)$ is aCM.
	\item If $M$ is a graded MCM $R$-module, then $\widehat M=M\otimes_R\Rh$ is a small CM $\Rh$-module.
\end{enumerate}
\end{lemma}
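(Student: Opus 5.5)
For (a), I will use the definition of Ulrich directly, together with Serre duality and the restriction to a hyperplane section. Let $\cE$ be Ulrich on the surface $Y\subset\PP^N$, so $H^i(\cE(-1))=H^i(\cE(-2))=0$ for all $i$, and we must show $H^1(\cE(t))=0$ for every $t\in\Z$.

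The cleanest route is Proposition~\ref{prop:ESW}(ii). Take a finite linear projection $\pi\colon Y\to\PP^2$; then $\pi_*\cE\iso\OO_{\PP^2}^{\oplus r}$. Since $\pi$ is finite, the projection formula gives
\[
H^1(Y,\cE(t))=H^1\bigl(\PP^2,\pi_*\cE\otimes\OO(t)\bigr)=H^1\bigl(\PP^2,\OO(t)\bigr)^{\oplus r}=0
\]
for all $t$. This uses $\pi^*\OO(1)=\OO_Y(H)$, which holds because the projection is linear.

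If one prefers not to pass through (ii), the fallback is the classical induction. Restrict to a general smooth hyperplane curve $C\in|H|$. The sequences $0\to\cE(t-1)\to\cE(t)\to\cE|_C(t)\to0$ show that $\cE|_C$ is Ulrich on $C$, and then Castelnuovo--Mumford regularity gives $H^1(\cE(t))=0$ for $t\ge -1$. Serre duality, with $\cE^\vee(K_Y+3H)$ again Ulrich, handles $t\le -2$. I do not expect an obstacle here, since (a) is essentially \cite{ESW}.

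For (b), let $M$ be a finitely generated graded $R$-module with $\depth M_\m=3$; by the usual convention, graded MCM means that depth is $3$ at the homogeneous maximal ideal $\m$. The key steps are as follows.

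\begin{enumerate}[label=(\arabic*)]
\item Depth over $R_\m$ is computed by local cohomology, or equivalently by $\operatorname{Ext}$ or Koszul homology on generators of $\m$. These commute with the faithfully flat extension $R_\m\to\Rh$, since $\Rh$ is also the $\m R_\m$-adic completion of $R_\m$.
\item Hence $\depth_{\Rh}\widehat M=\depth_{R_\m}M_\m=3$, and $\dim\Rh=\dim R_\m=3$.
\item Finally, $\widehat M\ne0$ because $M\ne0$ is graded, so $M_\m\ne0$, and completion is faithfully flat. Finite generation is preserved by base change.
\end{enumerate}

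The only subtle point, and the expected main obstacle, is justifying depth at $\m$ from the graded hypothesis. If graded MCM is defined through the graded local cohomology $H^i_\m(M)=0$ for $i<3$, this already equals $H^i_{\m R_\m}(M_\m)$, because $M$ is $\m$-torsion-free in the relevant sense and local cohomology supported at $\m$ localizes. Then $H^i_{\m\Rh}(\widehat M)=H^i_\m(M)\otimes_R\Rh=0$ for $i<3$ by flat base change, which is the statement I will write out.
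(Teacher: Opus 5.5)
Your proposal is correct and follows the paper's own argument. For (a) you use the trivial push-forward of Proposition~\ref{prop:ESW}(ii), so $H^1(Y,\cE(t))$ is a sum of copies of $H^1(\PP^2,\OO(t))=0$; for (b) you use faithful flatness of $R_\m\to\Rh$ together with flat base change for local cohomology. The appeal to $M$ being ``$\m$-torsion-free'' is not the right justification and is not needed: $H^i_\m(M)$ is $\m$-torsion, so it already equals $H^i_{\m R_\m}(M_\m)$ for every $R$-module $M$.
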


Part (a) follows from Proposition~\ref{prop:ESW}(ii), and (b) from the faithful flatness of $R_\m\to\Rh$. Thus a complete local ring $\Rh(Y,H)$ without small CM modules has no graded MCM module over $R$, no aCM bundle and no Ulrich bundle.

\section{Degree, signature, and the Ulrich obstruction}\label{sec:ulrich}

For a smooth surface $Y$, Noether's formula $12\chi(\OO_Y)=K_Y^2+c_2(Y)$ gives
\[
\sigma(Y):=K_Y^2-8\chi(\OO_Y)=\frac{K_Y^2-2c_2(Y)}{3},
\]
the topological signature of $Y$. If $\sigma(Y)>0$ and $Y\not\iso\PP^2$, then $Y$ is of general type and, by the Bogomolov--Miyaoka--Yau inequality, $\sigma(Y)\le\chi(\OO_Y)$, with equality exactly for ball quotients; see \cite[Rem.~1.6]{An2}.

In a 2017 lecture \cite{Be17} Beauville observed that on a surface of Picard rank one Bogomolov's inequality for an Ulrich bundle gives $H^2\ge\sigma(S)$, and noted that it seems hard to obtain a counterexample to Question~\ref{q:ESW} this way. The Picard rank hypothesis is not needed.

\begin{proposition}[{\cite[Prop.~2.1]{An1}}]\label{prop:ulrich}
Let $(S,H)$ be a smooth surface with $H$ very ample. If $S$ carries an Ulrich bundle, then
\[
H^2\ \ge\ K_S^2-8\chi(\OO_S)=\sigma(S).
\]
\end{proposition}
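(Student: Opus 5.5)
We need the Ulrich bound $H^2\ge\sigma$ on a surface. Standard approach: for an Ulrich bundle $\cE$ of rank $r$ on $(S,H)$, $c_1(\cE)\cdot H=\frac r2(3H^2+H\cdot K_S)$ and $c_2$ is determined. Combining Riemann–Roch with Bogomolov applied to the Harder–Narasimhan factors should give the inequality; we avoid assuming Picard rank one by using Hodge index.

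The plan is to apply Bogomolov's inequality to $\cE$ and then bound $c_1^2$ by the Hodge index theorem. First compute the Chern classes. Ulrich means $\chi(\cE(-1))=\chi(\cE(-2))=0$, and Proposition~\ref{prop:ESW} gives $\chi(\cE(t))=\frac r2 d(t+1)(t+2)$ with $d=H^2$. Riemann–Roch gives $\chi(\cE(t))=r\chi(\OO_S)+\frac12 c_1(\cE(t))\cdot(c_1(\cE(t))-K)-c_2(\cE(t))$. Matching the linear and constant terms in $t$ yields
\[
c_1\cdot H=\tfrac r2(3d+H\cdot K),\qquad
c_2=\tfrac12 c_1^2-\tfrac12 c_1\cdot K+r\chi(\OO_S)-rd,
\]
where the second formula comes from $\chi(\cE)=rd$.

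Next, Ulrich bundles are $\mu_H$-semistable (standard, e.g. \cite{CoHu}), so Bogomolov applies:
\[
\Delta=2rc_2-(r-1)c_1^2=c_1^2-r\,c_1\cdot K+2r^2\chi(\OO_S)-2r^2d\ \ge 0 .
\]
Write $c_1=\frac r2(3H+K)+\gamma$. Then $\gamma\cdot H=0$, so Hodge index gives $\gamma^2\le0$. Substituting,
\[
c_1^2-rc_1\cdot K=\tfrac{r^2}{4}(9d-K^2)+\gamma^2+\gamma\cdot\bigl(r(3H+K)-rK\bigr)
=\tfrac{r^2}{4}(9d-K^2)+\gamma^2,
\]
since $\gamma\cdot 3rH=0$. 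Hence
\[
0\le\Delta\le\tfrac{r^2}{4}(9d-K^2)+2r^2\chi(\OO_S)-2r^2d=\tfrac{r^2}{4}\bigl(d-K^2+8\chi(\OO_S)\bigr),
\]
that is, $d\ge K^2-8\chi(\OO_S)=\sigma$.

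The main obstacle is the semistability input. If it were unavailable, one would instead run the argument on the Harder–Narasimhan factors, as the variance identity later does. The key point to verify carefully is the cancellation of $\gamma\cdot K$ in the substitution above, which is what removes the Picard-rank-one hypothesis from Beauville's observation.
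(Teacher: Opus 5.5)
Your proof is correct and is the same computation the paper uses: with $\gamma=r\alpha$, your identity $\Delta(\cE)=\gamma^2+\frac{r^2}{4}(d-\sigma)$ is exactly the variance formula of Proposition~\ref{prop:synthesis} evaluated at $v(f_*\cE)=0$ ($f_*\cE$ is trivial for a linear projection $f$), i.e.\ the case $m=2$ of Remark~\ref{rem:rr-checks}(iii). The only difference is that the paper does not cite semistability (the standard reference is \cite[Thm.~2.9]{CH}); it derives semistability from the triviality of $f_*\cE$ by majorization, since Lemma~\ref{lem:maj} gives $v_b\le v_a=0$ and hence a single Harder--Narasimhan factor, and this is the form of the argument that carries over to the aCM and local cases.
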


\begin{proposition}[{\cite[Lemma~3.1]{Lo}}]\label{prop:lopez}
Let $X$ be smooth of dimension $m\ge2$ and $H$ very ample. If
\[
(m+1)H^m<\bigl(K_X^2-2c_2(X)\bigr)\cdot H^{m-2},
\]
then neither $(X,H)$ nor any of its successive smooth hyperplane sections (down to dimension $2$) carries an Ulrich bundle.
\end{proposition}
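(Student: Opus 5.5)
The plan is to reduce to the surface case, Proposition~\ref{prop:ulrich}, by cutting down with general hyperplane sections. We use that Ulrich bundles restrict to Ulrich bundles on hyperplane sections and that the relevant numbers change in a controlled way. Concretely, let $\cE$ be an Ulrich bundle of rank $r$ on $X\subset\PP^N$. We would not go through restriction directly. Instead we work with a Bogomolov-type inequality on $X$ itself, obtained from the characterization in Proposition~\ref{prop:ESW}.

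\textbf{Step 1: Chern classes of $\cE$.} First we pin down $c_1(\cE)\cdot H^{m-1}$ and $c_2(\cE)\cdot H^{m-2}$. From $\chi(\cE(-p))=0$ for $1\le p\le m$ and $\chi(\cE)=r\,H^m$ (since $\pi_*\cE$ is trivial of rank $rH^m$), Riemann--Roch determines the top part of the Hilbert polynomial: $\chi(\cE(t))=rH^m\binom{t+m}{m}$. Comparing the coefficients of $t^{m-1}$ and $t^{m-2}$ gives the classical formulas
\[
c_1(\cE)\cdot H^{m-1}=\tfrac r2\bigl(K_X+(m+1)H\bigr)\cdot H^{m-1},
\]
together with an expression for $\bigl(c_1^2-2c_2\bigr)(\cE)\cdot H^{m-2}$ in terms of $K_X^2$, $c_2(X)$, $K_X\cdot H^{m-1}$ and $H^m$. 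These formulas are easiest to get on a general surface section $S=X\cap H_1\cap\dots\cap H_{m-2}$. The restriction $\cE|_S$ is Ulrich for $(S,H|_S)$, by iterating the exact sequences $0\to\cE(-1)\to\cE\to\cE|_{H_1}\to0$ and using the vanishings in the definition. Moreover $K_S=(K_X+(m-2)H)|_S$ by adjunction, and $c_2(S)$ is computed from $c(T_X)/(1+H)^{m-2}$.

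\textbf{Step 2: semistability and Bogomolov.} Ulrich bundles are $\mu_H$-semistable, because their normalized Hilbert polynomial is that of $\OO_{\PP^m}$ pushed down. Hence $\cE|_S$ is semistable on $S$, and Bogomolov gives $\Delta(\cE|_S)\ge0$. Proposition~\ref{prop:ulrich} packages exactly this on the surface: it yields $H_S^2\ge K_S^2-8\chi(\OO_S)=\tfrac13\bigl(K_S^2-2c_2(S)\bigr)$.

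\textbf{Step 3: translating back to $X$.} Substituting $K_S=(K_X+(m-2)H)|_S$ and $c_2(S)=\bigl(c_2(X)-(m-2)c_1(X)H+\binom{m-1}{2}H^2\bigr)|_S$, we compute
\[
K_S^2-2c_2(S)=\Bigl(K_X^2-2c_2(X)+2(m-2)K_XH-2(m-2)K_XH+\bigl((m-2)^2-(m-1)(m-2)\bigr)H^2\Bigr)\cdot H^{m-2}.
\]
The $K_XH$ terms cancel, and the coefficient of $H^2$ is $-(m-2)$. Thus the surface inequality $3H_S^2\ge K_S^2-2c_2(S)$ becomes
\[
(K_X^2-2c_2(X))\cdot H^{m-2}\le\bigl(3+(m-2)\bigr)H^m=(m+1)H^m,
\]
which contradicts the hypothesis. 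The same computation on each intermediate smooth section $X\cap H_1\cap\dots\cap H_j$ shows that the quantity $(K^2-2c_2)\cdot H^{\dim-2}-(\dim+1)H^{\dim}$ is invariant under cutting. So the hypothesis passes to every successive section, and the argument applied there excludes Ulrich bundles on them too.

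\textbf{Main obstacle.} The delicate point is Step 3: getting the Chern class bookkeeping for iterated hyperplane sections right, above all $c_2(S)$, and verifying that the $K_X\cdot H^{m-1}$ terms cancel. We also need Bertini, which guarantees smooth sections in characteristic zero since $H$ is very ample.
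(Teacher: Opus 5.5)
Your proof is correct, but it takes a different route from the paper's. You restrict the Ulrich bundle to a general smooth surface section $S$, where $\cE|_S$ is again Ulrich, and apply Proposition~\ref{prop:ulrich} there. You then translate back via adjunction and $c(T_S)=c(T_X)|_S/(1+H)^{m-2}$. Your bookkeeping $K_S^2-2c_2(S)=(K_X^2-2c_2(X))\cdot H^{m-2}-(m-2)H^m$ is right. Its one-step version is the case $a=1$ of Lemma~\ref{lem:excess}, which also gives the assertion about successive sections.

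The paper never restricts the bundle. In Section~\ref{ssec:identity}(i) it pushes $\cE$ forward along a linear projection $f\colon X\to\PP^m$. It gets $v(f_*\cE)=0$ from $f_*\cE\iso\OO^N$, and $v_b=0$, hence $\mu$-semistability, from majorization (Lemma~\ref{lem:maj-m}). It then reads off $I(X,H)\le0$ from \eqref{eq:identity-m}. That identity rests on the Todd-class computation of Proposition~\ref{prop:rr-m}, on Bogomolov's inequality $\Delta\cdot H^{m-2}\ge0$ on $X$ itself, and on the Hodge index theorem applied only to a numerical class on a surface section.

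Your reduction is more elementary: it needs only the surface Bogomolov inequality and no Riemann--Roch on $\PP^m$. But it depends on the vanishings $H^i(\cE(-p))=0$ surviving restriction. The paper's identity is built to be uniform across the Ulrich, aCM and local levels. In Theorem~\ref{thm:local-m} the sheaf $P$ on $X$ is only the restriction of an extension over $W$, and it satisfies no vanishing condition that could be passed to a section.

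Two minor points. First, Step~1 is never used, since Proposition~\ref{prop:ulrich} already contains the Chern class computation on $S$; your opening announcement that you will work ``on $X$ itself'' also contradicts what you then do. Second, semistability of $\cE$ does not by itself pass to a complete intersection of hyperplanes, and the normalized Hilbert polynomial alone does not give semistability. Neither matters, because $\cE|_S$ is Ulrich and that is all Proposition~\ref{prop:ulrich} needs.
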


For $m=2$ this is Proposition~\ref{prop:ulrich}. The difference of the two sides, the excess $I(X,H)$ below, behaves simply under hypersurface sections.

\begin{lemma}[{\cite[Rem.~5.7]{An2}}]\label{lem:excess}
Let $X$ be smooth of dimension $m\ge2$ and $H$ ample, and set $I(X,H)=(K_X^2-2c_2(X))\cdot H^{m-2}-(m+1)H^m$. If $m\ge3$ and $X'\in|aH|$ is smooth and $H'=H|_{X'}$, then
\[
I(X',H')=a\bigl[I(X,H)-(a^2-1)H^m\bigr].
\]
\end{lemma}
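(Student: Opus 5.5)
The identity is a direct Chern class computation on $X'$: express $K_{X'}$ and $c_2(X')$ by restriction from $X$ using adjunction and the normal bundle sequence, then push every intersection number on $X'$ back to $X$ via $[X']=aH$. The hypothesis $m\ge3$ only ensures $\dim X'=m-1\ge2$, so that $I(X',H')$ is defined.

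\emph{Chern classes of $X'$.} Since $X'\in|aH|$ is smooth, its normal bundle is $\OO_{X'}(aH)$. The exact sequence $0\to T_{X'}\to T_X|_{X'}\to\OO_{X'}(aH)\to0$ gives
\[
c(T_{X'})=c(T_X)|_{X'}\,(1+aH)^{-1}.
\]
Write $c_1(X)=-K_X$ and expand to second order. This gives $K_{X'}=(K_X+aH)|_{X'}$, which is adjunction, and
\[
c_2(X')=\bigl(c_2(X)+aK_X\cdot H+a^2H^2\bigr)|_{X'}.
\]

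\emph{The combination entering $I$.} Substituting,
\[
K_{X'}^2-2c_2(X')=\bigl(K_X^2+2aK_XH+a^2H^2-2c_2(X)-2aK_XH-2a^2H^2\bigr)|_{X'}=\bigl(K_X^2-2c_2(X)-a^2H^2\bigr)|_{X'}.
\]
The cross terms in $K_X\cdot H$ cancel. This cancellation is the only point needing care, and I expect no real obstacle beyond tracking signs.

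\emph{Intersection numbers.} Intersecting a class restricted to $X'$ with $H'^{m-3}$ on $X'$ equals intersecting it with $aH\cdot H^{m-3}$ on $X$. Hence
\[
(K_{X'}^2-2c_2(X'))\cdot H'^{m-3}=a\bigl[(K_X^2-2c_2(X))\cdot H^{m-2}-a^2H^m\bigr],
\qquad H'^{m-1}=aH^m.
\]
Since $\dim X'=m-1$, the definition of the excess gives $I(X',H')=(K_{X'}^2-2c_2(X'))\cdot H'^{m-3}-m\,H'^{m-1}$. Substituting,
\[
I(X',H')=a\bigl[(K_X^2-2c_2(X))H^{m-2}-(a^2+m)H^m\bigr].
\]
Writing $(K_X^2-2c_2(X))H^{m-2}=I(X,H)+(m+1)H^m$, the bracket becomes $I(X,H)-(a^2-1)H^m$, which is the claimed formula.
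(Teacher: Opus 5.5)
Your computation is correct: the normal bundle sequence gives $K_{X'}=(K_X+aH)|_{X'}$ and $c_2(X')=(c_2(X)+aK_X\cdot H+a^2H^2)|_{X'}$. The projection formula with $[X']=aH$, together with replacing the coefficient $m+1$ by $m$ for $\dim X'=m-1$, then yields exactly the stated formula. The survey gives no proof of this lemma and only cites it from [An2, Rem.~5.7]; your adjunction computation is the standard argument behind it. The cross-term cancellation you flag is in fact automatic: $K^2-2c_2=2\ch_2(T)$ is additive on $0\to T_{X'}\to T_X|_{X'}\to\OO_{X'}(aH)\to0$, which gives $K_{X'}^2-2c_2(X')=(K_X^2-2c_2(X)-a^2H^2)|_{X'}$ directly. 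The paper uses the same additivity in the proof of Corollary~\ref{cor:products}.
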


\section{Hirzebruch--Kummer covers and the examples}\label{sec:kummer}

The examples of \cite{An1} are Hirzebruch's surfaces attached to line arrangements \cite{Hi,BHH}. Let $\cA=\{L_1,\dots,L_k\}$ be an arrangement of $k\ge4$ lines in $\PP^2$, not all concurrent, with defining forms $\ell_i$. Let $t_r$ be the number of points lying on exactly $r$ lines, and put
\[
f_0=\sum_{r\ge2}t_r,\qquad f_1=\sum_{r\ge2}rt_r,\qquad T=\sum_{r\ge3}t_r .
\]
The forms $\ell_i$ define a linear embedding $\PP^2\hookrightarrow\PP^{k-1}$ with image a plane $\Lambda$. For $n\ge2$ let $\Phi_n\colon\PP^{k-1}\to\PP^{k-1}$, $[z_i]\mapsto[z_i^n]$, and
\[
X_n=\Phi_n^{-1}(\Lambda),\qquad \pi_n=\Phi_n|_{X_n}\colon X_n\to\Lambda\iso\PP^2 .
\]

\begin{proposition}[{\cite[Props.~3.2, 3.4, 3.6, Cors.~3.5, 3.7]{An1}}]\label{prop:kummer}
\begin{enumerate}[label=(\alph*)]
	\item $X_n$ is a normal integral complete intersection, cut out by $k-3$ hypersurfaces of degree $n$; $\pi_n$ is finite of degree $n^{k-1}$, with $\pi_n^*\OO(1)=\OO_{X_n}(n)$.
	\item $X_n$ is smooth over points of $\PP^2$ lying on at most two lines. Over an $r$-fold point $p$ with $r\ge3$ it has $n^{k-r-1}$ singular points, each with completed local ring isomorphic to that of the vertex of the affine cone over a smooth complete-intersection curve $C_{n,r}\subset\PP^{r-1}$ cut out by $r-2$ diagonal hypersurfaces of degree $n$; $\deg C_{n,r}=n^{r-2}$ and $2g(C_{n,r})-2=n^{r-2}\bigl((r-2)n-r\bigr)$.
	\item Let $\Sigma_n=\Sing(X_n)_{\mathrm{red}}$ be the reduced singular locus. Blowing up $\Sigma_n$ gives a smooth surface $Y_n$; the exceptional curves are copies of $C_{n,r}$ with self-intersection $-n^{r-2}$, and $\cI_{\Sigma_n}\OO_{Y_n}=\OO_{Y_n}(-E_n)$ for the reduced exceptional divisor $E_n$.
	\item With $A_n$ the pull-back of $\OO_{X_n}(1)$ and $E_{n,r}$ the exceptional curves over $r$-fold points,
	\begin{gather*}
	A_n^2=n^{k-3},\qquad A_n\cdot E_n=0,\qquad E_n^2=-Tn^{k-3},\\
	K_{Y_n}=\bigl((k-3)n-k\bigr)A_n-\sum_{r\ge3}\bigl((r-2)(n-1)-1\bigr)E_{n,r}.
	\end{gather*}
	\item The signature is
	\[
	\sigma(Y_n)=\frac{n^{k-3}}{3}\bigl(Q_\cA n^2+R_\cA\bigr),\qquad
	Q_\cA=3-k+\sum_{r\ge2}(r-2)t_r,\quad R_\cA=k+3t_2-f_1-f_0 .
	\]
\end{enumerate}
\end{proposition}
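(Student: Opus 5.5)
Since the statement collects facts from \cite{An1}, the plan is to reprove each part with standard tools for abelian covers branched along line arrangements, working chart by chart over $\PP^2$.

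For (a), $X_n=\Phi_n^{-1}(\Lambda)$ is cut out in $\PP^{k-1}$ by the $k-3$ linear relations among the $\ell_i$, with each $z_i$ replaced by $z_i^n$. So it is the intersection of $k-3$ hypersurfaces of degree $n$. The map $\Phi_n$ is finite of degree $n^{k-1}$ (a Galois cover with group $(\Z/n)^{k-1}$), and $\Phi_n^*\OO(1)=\OO(n)$. Hence the restriction to $\Lambda$ is finite of the same degree, and $\dim X_n=2$. A dimension-$2$ scheme cut out by $k-3$ equations in $\PP^{k-1}$ is a complete intersection, hence Cohen--Macaulay. Normality then follows from Serre's criterion once (b) shows the singular locus is finite. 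Integrality follows from connectedness, since a positive-dimensional complete intersection is connected, together with normality.

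For (b), I would work locally over a point $p\in\PP^2$ lying on exactly the lines $L_1,\dots,L_r$. Near $p$ the remaining $\ell_j$ are units, so the cover $z_j^n=\ell_j$ is étale in those coordinates. This gives $n^{k-1-r}$ sheets, after accounting for the projective scaling. Locally $X_n$ is then $\{z_i^n=\ell_i,\ i\le r\}$ over a neighbourhood of $p$ in $\A^2$.
\begin{itemize}
\item If $r\le2$, the $\ell_i$ form part of a coordinate system, so $X_n$ is smooth.
\item If $r\ge3$, the $\ell_i$ satisfy $r-2$ linear relations through $p$. So $X_n$ is locally the affine cone $\{\sum_i a_{ji}z_i^n=0\}\subset\A^r$, which is the cone over the diagonal complete intersection $C_{n,r}$.
\end{itemize}
Smoothness of $C_{n,r}$ follows from the general position of the $r$ lines, because any two of the $\ell_i$ are independent. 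Its degree $n^{r-2}$ and its canonical degree come from adjunction: $K_C=\OO((r-2)n-r)$, so $2g-2=n^{r-2}((r-2)n-r)$.

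For (c), blowing up the vertex of a cone over a smooth curve $C$ gives the total space of $\OO_C(-1)$. The exceptional curve is $C$ with self-intersection $-\deg C$, and the maximal ideal pulls back to $\OO(-E)$.

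For (d) and (e), I would compute as follows.
\begin{itemize}
\item $A_n^2=\deg X_n=n^{k-3}$.
\item $A_n\cdot E_n=0$, since $E_n$ is contracted.
\item $E_n^2=\sum_r(-n^{r-2})\cdot t_r\,n^{k-r-1}=-Tn^{k-3}$.
\item $K_{X_n}=\OO((k-3)n-k)$ by adjunction for the complete intersection.
\item For $K_{Y_n}$, adjunction on $E_{n,r}$ gives $(K_Y+E)\cdot E=2g-2$. The discrepancy $a$ with $K_Y=A\text{-part}+aE$ must satisfy $(a+1)E^2=2g-2$, so $a=-((r-2)n-r)-1=-((r-2)(n-1)-1)$.
\item The signature needs $K^2$ and $c_2$. $K_Y^2$ follows from (d). For $c_2$, compute $e(Y_n)$ by stratifying $\PP^2$ into the complement of the lines, the punctured lines, and the multiple points. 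Each stratum has known preimage cardinality (with ramification index $n$ along each line). Then replace each singular point by $C_{n,r}$, with $e=2-2g$. Finally $\sigma=(K^2-2e)/3$.
\end{itemize}
The main obstacle is this last combinatorial bookkeeping. Expressing $e(\PP^2\setminus\cA)$ and the Euler characteristics of the punctured lines through $k,t_r,f_0,f_1$ (for instance $e(\PP^2\setminus\cA)=3-2k+f_1-f_0$) and collecting the powers of $n$ into $Q_\cA n^2+R_\cA$ is error-prone. I would check the result on the ball quotient example.
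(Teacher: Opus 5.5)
The survey gives no proof of this proposition; it quotes it from \cite{An1}. So there is no in-text argument to compare with, and your plan has to stand on its own. It does: it is the standard argument, and the intermediate claims you state are correct. These are the local model over an $r$-fold point, with $n^{k-r-1}$ sheets, the cone blow-up, the discrepancy $-((r-2)(n-1)-1)$ from $(a+1)C^2=2g-2$, $E_n^2=-Tn^{k-3}$ and $e(\PP^2\smallsetminus\cA)=3-2k+f_1-f_0$. The local model works because you can dehomogenize by some $\ell_k$ with $\ell_k(p)\ne0$, which exists since the lines are not all concurrent. I checked that the method really produces the stated formula in (e).

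What you still need to close (e) is the incidence count $\sum_{r\ge2}\binom r2 t_r=\binom k2$: each pair of lines meets in exactly one point. Your stratification gives
\[
\frac{e(Y_n)}{n^{k-3}}=(3-2k+f_1-f_0)\,n^2+2(k-f_1+f_0)\,n+f_1-t_2,\qquad
\frac{K_{Y_n}^2}{n^{k-3}}=\bigl((k-3)n-k\bigr)^2-\sum_{r\ge3}t_r\bigl((r-2)n-(r-1)\bigr)^2 .
\]
In $K^2-2e$, the $k^2$ terms cancel, and the whole coefficient of $n$ vanishes, only after substituting $\sum_r r(r-1)t_r=k(k-1)$. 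After that substitution, $3\sigma/n^{k-3}=Q_\cA n^2+R_\cA$ follows directly. For the Hesse arrangement this gives $e/n^9=18n^2-54n+48$, so at $n=3$ one gets $K^2=3c_2=144\cdot 3^9$, the check you proposed.

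Three smaller points should be made explicit.
\begin{itemize}
\item In (c) you transfer the cone computation to $X_n$. The comparison is étale-local, not just formal. Project to $(z_1,\dots,z_r)$; the remaining equations $z_j^n=\ell_j/\ell_k$ are étale near $p$. Blowing up commutes with flat base change, so $Y_n$ is étale-locally $\Tot(\OO_{C_{n,r}}(-1))$. This gives both the smoothness of $Y_n$ and the self-intersection $-n^{r-2}$.
\item For the count in (b) you need the vertex to be actually singular. For $n\ge2$ the equations have no linear terms, so the embedding dimension is $r\ge3>2$.
\item ``General position'' is the wrong phrase, since the $r$ lines through $p$ are concurrent. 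What you use, correctly, is only that they are pairwise distinct.
\end{itemize}
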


\begin{figure}[ht]
\centering
\begin{tikzpicture}[scale=1.1]
	\clip (-0.45,-0.35) rectangle (2.85,2.3);
	\coordinate (p1) at (0,0); \coordinate (p2) at (2.4,0);
	\coordinate (p3) at (1.2,2.0); \coordinate (p4) at (1.2,0.72);
	\foreach \a/\b in {p1/p2,p1/p3,p1/p4,p2/p3,p2/p4,p3/p4}
	{\draw[shorten >=-3cm,shorten <=-3cm] (\a) -- (\b);}
	\foreach \p in {p1,p2,p3,p4} {\fill (\p) circle (1.6pt);}
	\foreach \q in {(1.2,0),(0.635,1.058),(1.765,1.059)} {\draw[fill=white] \q circle (1.6pt);}
\end{tikzpicture}
\caption{The complete quadrangle: $k=6$, four triple points (filled), three double points (open); $Q=1$, $R=-10$.}\label{fig:quadrangle}
\end{figure}

The three arrangements used below are the Hesse arrangement of the $12$ lines through pairs of the $9$ flexes of a smooth cubic ($t_4=9$, $t_2=12$, $Q=9$, $R=-33$), its dual ($k=9$, $t_3=12$, $Q=6$, $R=-39$), and the complete quadrangle of the six lines through four general points ($t_3=4$, $t_2=3$, $Q=1$, $R=-10$), see Figure~\ref{fig:quadrangle}. In all three, $\sigma(Y_n)$ grows like $n^{k-1}$, while $(aA_n-E_n)^2=(a^2-T)n^{k-3}$ grows like $n^{k-3}$.

\begin{lemma}[{\cite[Lemma~4.1]{An1}}]\label{lem:rees}
Let $X$ be projective with $\OO_X(1)$ very ample, $\cI\subset\OO_X$ a coherent ideal, $\rho\colon Y=\Bl_\cI X\to X$, $A=\rho^*\OO_X(1)$, and $\cI\OO_Y=\OO_Y(-E)$. Suppose $\cI(\nu)$ is generated by global sections for some $\nu\ge1$. Then the class $(\nu+1)A-E$ is very ample on $Y$.
\end{lemma}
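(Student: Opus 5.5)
The plan is to exhibit enough global sections of $\cI(\nu)\otimes\OO_X(1)$ on $Y$ to separate points and tangent vectors. The basic inputs are two facts about $Y=\Bl_\cI X=\Proj_X\bigoplus_{j\ge0}\cI^j$. First, since $\cI(\nu)$ is globally generated, the surjection $H^0(X,\cI(\nu))\otimes\OO_X\to\cI(\nu)$ induces a closed embedding
\[
Y\hookrightarrow \PP\bigl(\cI(\nu)\bigr)\hookrightarrow X\times\PP^{M},\qquad M=h^0(\cI(\nu))-1 .
\]
Second, under this embedding the tautological bundle $\OO_{\PP(\cI(\nu))}(1)$ restricts to $\OO_Y(-E)\otimes\rho^*\OO_X(\nu)=\OO_Y(\nu A-E)$, because $\cI\OO_Y=\OO_Y(-E)$ and $\OO_{\Proj}(1)$ on the Rees algebra is $\cI\OO_Y$. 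The composite map $Y\to\PP^M$ is therefore given by the linear system $|\nu A-E|$, that is, by the sections of $\cI(\nu)$ pulled back to $Y$.

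Next I will use that $\OO_X(1)$ is very ample, so that $X\hookrightarrow\PP^N$. Then $X\times\PP^M\hookrightarrow\PP^N\times\PP^M$, and the Segre embedding $\OO(1,1)$ is very ample on this product. Restricting to $Y$, the class $\OO(1,1)|_Y=\rho^*\OO_X(1)\otimes\OO_Y(\nu A-E)=\OO_Y((\nu+1)A-E)$ is very ample, since the restriction of a very ample bundle to a closed subscheme is very ample. Concretely, $Y$ embeds via the products $s\cdot u$, with $s\in H^0(\OO_X(1))$ and $u\in H^0(\cI(\nu))$, which are sections of $\cI(\nu+1)$, i.e.\ of $(\nu+1)A-E$.

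The main point needing care is that $Y\to\PP(\cI(\nu))$ really is a closed immersion, i.e.\ that the blow-up equals $\Proj$ of the Rees algebra. The symmetric algebra $\mathrm{Sym}\,\cI(\nu)$ surjects onto the Rees algebra $\bigoplus_j\cI^j(j\nu)$, and the surjection $H^0\otimes\OO_X\twoheadrightarrow\cI(\nu)$ makes $\mathrm{Sym}(H^0\otimes\OO_X)\to\bigoplus_j\cI^j(j\nu)$ surjective in each degree. Taking $\Proj$ of these surjections of graded $\OO_X$-algebras gives closed immersions. I also have to check that the twist by $\OO_X(\nu)$ does not change $\Proj$ and only changes $\OO(1)$ by $\rho^*\OO_X(\nu)$, which is the standard Rees algebra convention. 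With these identifications the proof is complete.
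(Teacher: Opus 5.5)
Your proof is correct and complete. You embed $Y=\Proj_X\bigoplus_j\cI^j(j\nu)$ as a closed subscheme of $X\times\PP^M$ using the generators of $\cI(\nu)$, note that $\OO(1)$ restricts to $\OO_Y(\nu A-E)$ (Hartshorne II.7.9, II.7.13), and then compose with the Segre embedding against $\OO_X(1)$; this is the standard Rees-algebra argument. The survey gives no proof of its own, only the citation \cite[Lemma~4.1]{An1}, and your route needs nothing further.
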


\begin{proposition}[{\cite[Props.~5.1, 5.3]{An1}}]\label{prop:hesse-va}
For the Hesse arrangement and every $n\ge2$, the class $H_n=4A_n-E_n$ is very ample on $Y_n$. The class $3A_n-E_n$ is nef with $(3A_n-E_n)^2=0$, so $4$ is the smallest integral coefficient $a$ for which $aA_n-E_n$ can be ample.
\end{proposition}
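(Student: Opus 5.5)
The plan is to obtain very ampleness of $H_n=4A_n-E_n$ from Lemma~\ref{lem:rees} with $\nu=3$, applied to $X=X_n$, $\OO_X(1)=\OO_{X_n}(1)$ and $\cI=\cI_{\Sigma_n}$. The key input is that $\cI_{\Sigma_n}(3)$ is generated by global sections of $\OO_{X_n}(3)$. The statements about $3A_n-E_n$ then come from the same sections and the intersection numbers of Proposition~\ref{prop:kummer}(d).

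\emph{Step 1: the generating cubics.} For the Hesse arrangement ($k=12$, $t_4=9$, $t_2=12$, so $T=9$), $\Sigma_n$ lies over the nine quadruple points, namely the flexes. The $12$ lines split into the four triangles of the Hesse pencil, which are its singular members. Each flex lies on exactly one line of each triangle. For a triangle $\{L_a,L_b,L_c\}$ I take the cubic $s_{abc}=z_az_bz_c\in H^0(\OO_{X_n}(3))$, where the $z_i$ are the coordinates of $\PP^{11}$ restricted to $X_n$. It vanishes on all of $\Sigma_n$.

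\emph{Step 2: local generation.} I need to check that the four sections $s_{abc}$ generate $\cI_{\Sigma_n}(3)$ everywhere.
\begin{itemize}
\item At a point $x\in\Sigma_n$ over a flex $p$, let $L_{i_1},\dots,L_{i_4}$ be the four lines through $p$. Every $z_j$ with $j\notin\{i_1,\dots,i_4\}$ is a unit at $x$. By Proposition~\ref{prop:kummer}(b), $\widehat{\OO}_{X_n,x}$ is the cone over $C_{n,4}\subset\PP^3$ with cone coordinates $z_{i_1},\dots,z_{i_4}$. So the reduced maximal ideal is $(z_{i_1},\dots,z_{i_4})$.
\item The four lines through $p$ lie in four different triangles. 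Hence the four products equal $z_{i_1},\dots,z_{i_4}$ up to units at $x$, and they generate the ideal there.
\item At a point $x\notin\Sigma_n$ with image $q=\pi_n(x)$, $q$ is not a flex, hence not a base point of the Hesse pencil. So $q$ lies on at most one triangle, and some $s_{abc}$ does not vanish at $x$.
\end{itemize}
Lemma~\ref{lem:rees} then gives very ampleness of $4A_n-E_n$.

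The step I expect to need the most care is identifying the local ideal of the reduced point with $(z_{i_1},\dots,z_{i_4})$ from the cone description. One must make sure that the coordinates of the cone over $C_{n,4}$ really are the restricted $z_i$. This should follow from the local equations $z_i^n=\ell_i$: near $p$ these eliminate the two local coordinates of $\PP^2$, leaving the two diagonal degree-$n$ equations in $z_{i_1},\dots,z_{i_4}$.

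\emph{Step 3: the class $3A_n-E_n$.} Since $\cI_{\Sigma_n}\OO_{Y_n}=\OO_{Y_n}(-E_n)$, the pull-backs of the four cubics generate $\OO_{Y_n}(3A_n-E_n)$. So this class is base-point-free, hence nef. By Proposition~\ref{prop:kummer}(d), with $A_n^2=n^9$, $A_n\cdot E_n=0$ and $E_n^2=-9n^9$,
\[
(3A_n-E_n)^2=9n^9-9n^9=0.
\]
For $a\le3$,
\[
(aA_n-E_n)\cdot(3A_n-E_n)=(3a-9)n^9\le0 .
\]
Here $3A_n-E_n$ is a nonzero nef class, since it has positive degree $3n^9$ against the ample $A_n+\epsilon(\cdot)$ direction, or simply because $(3A_n-E_n)\cdot A_n=3n^9>0$. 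An ample class must meet a nonzero nef class positively, so $aA_n-E_n$ is not ample for $a\le3$, and $4$ is the smallest possible coefficient.
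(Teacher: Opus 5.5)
Your proof is correct and follows the paper's route: you apply Lemma~\ref{lem:rees} with $\nu=3$, using the four cubic monomials $z_az_bz_c$ of the singular triangles of the Hesse pencil to generate $\cI_{\Sigma_n}(3)$, and your local check at the points of $\Sigma_n$ and the intersection computation with the nef class $3A_n-E_n$ supply details the paper leaves implicit. The one fact you use without comment, that every point of $X_n$ over a flex lies in $\Sigma_n$, follows from your own local description (the cone over $C_{n,4}$ is singular, since $\deg C_{n,4}=n^2>1$) or from the count of exactly $n^{7}$ points in each such fibre.
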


The proof applies Lemma~\ref{lem:rees} with $\nu=3$: the four cubic monomials attached to the singular triangles of the Hesse pencil generate $\cI_{\Sigma_n}(3)$, for every $n$.

\begin{theorem}[{\cite[Thms.~1.1, 1.2, 5.6, \S5.1]{An1}}]\label{thm:an1}
For every $n\ge3$ the Hesse pair $(Y_n,H_n=4A_n-E_n)$ satisfies
\[
H_n^2=7n^9<(3n^2-11)n^9=\sigma(Y_n),
\]
so it carries no Ulrich bundle of any rank: $\uc(Y_n,H_n)=\infty$. The surface $Y_3$ is a compact ball quotient, with $K_{Y_3}=15A-3E$ ample, $K^2=144\cdot3^9=3c_2$, $\chi(\OO_{Y_3})=16\cdot3^9$. The surfaces $Y_n$ are pairwise non-isomorphic, since $K_{Y_n}^2=9n^9(n-1)(5n-7)$ is strictly increasing.
\end{theorem}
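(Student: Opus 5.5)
Theorem~\ref{thm:an1} combines Propositions~\ref{prop:kummer}, \ref{prop:hesse-va} and~\ref{prop:ulrich}, so the plan is a computation followed by an appeal to the Ulrich obstruction. For the Hesse arrangement we have $k=12$, $t_4=9$, $t_2=12$, so $T=9$ and $k-3=9$. Proposition~\ref{prop:kummer}(d) gives $A_n^2=n^9$, $A_n\cdot E_n=0$ and $E_n^2=-9n^9$. Hence
\[
H_n^2=(4A_n-E_n)^2=16n^9-9n^9=7n^9 .
\]
By Proposition~\ref{prop:kummer}(e) with $Q=9$ and $R=-33$ (recomputed from $t_r$ as a check: $Q=3-12+2\cdot9=9$, and $R=12+36-(36+24)-21=-33$), we get
\[
\sigma(Y_n)=\tfrac{n^9}{3}(9n^2-33)=(3n^2-11)n^9 .
\]
The inequality $7<3n^2-11$ holds exactly when $n^2>6$, that is, for $n\ge3$.

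Since $H_n$ is very ample by Proposition~\ref{prop:hesse-va}, Proposition~\ref{prop:ulrich} applies: an Ulrich bundle of any rank would force $H_n^2\ge\sigma(Y_n)$. This is impossible, so $\uc(Y_n,H_n)=\infty$. If one wants the underlying mechanism, it is Bogomolov's inequality applied to the semistable factors of the Harder--Narasimhan filtration of an Ulrich bundle. That step is contained in Proposition~\ref{prop:ulrich}, so I do not reprove it.

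For $n=3$ the canonical class comes from Proposition~\ref{prop:kummer}(d). Only $4$-fold points occur, with coefficient $(r-2)(n-1)-1=3$, so
\[
K=\bigl(9\cdot3-12\bigr)A-3E=15A-3E,\qquad K^2=(225-81)\cdot3^9=144\cdot3^9 .
\]
Writing $\sigma=(27-11)3^9=16\cdot3^9$, the identity $\sigma=(K^2-2c_2)/3$ gives $c_2=48\cdot3^9$, so $K^2=3c_2$. Noether's formula then gives $\chi=(K^2+c_2)/12=16\cdot3^9$.

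Ampleness of $K=3(5A-E)$ follows from the same Rees-algebra argument as in Proposition~\ref{prop:hesse-va}: $5A-E=A+(4A-E)$ is nef plus very ample. Equality in Miyaoka--Yau for a minimal surface of general type with $K$ ample then identifies $Y_3$ as a ball quotient.

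For general $n$, the formula $K_{Y_n}^2=\bigl((9n-12)^2-9(2n-3)^2\bigr)n^9$ factors as $9n^9(n-1)(5n-7)$. Each factor is increasing for $n\ge2$, so $K^2$ is strictly increasing and the $Y_n$ are pairwise non-isomorphic.

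The only delicate step is the ampleness of $K_{Y_3}$ needed for the ball-quotient identification; everything else is bookkeeping with the intersection numbers above.
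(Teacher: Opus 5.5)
Your proposal is correct and uses the ingredients the survey points to for this result, which it quotes from \cite{An1} without reproving: the Hesse data $k=12$, $T=9$, $Q=9$, $R=-33$ fed into Proposition~\ref{prop:kummer}(d),(e), very ampleness of $H_n$ from Proposition~\ref{prop:hesse-va}, and the Ulrich obstruction of Proposition~\ref{prop:ulrich}. Your computations of $K_{Y_3}$, $c_2$, $\chi$ and $K_{Y_n}^2=9n^9(n-1)(5n-7)$ are right. Deducing the ball-quotient structure of $Y_3$ from the ampleness of $K=3\bigl(A+(4A-E)\bigr)$ together with equality in Miyaoka--Yau (Yau's theorem) is the standard argument behind Hirzebruch's example.
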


The same mechanism works for other arrangements admitting low-degree generators of the singular ideal \cite[Thm.~1.3]{An1}; Table~\ref{tab:families} lists the three families treated in \cite[\S\S5, 7, 8]{An1}.

\begin{table}[ht]
\centering
\small
\renewcommand{\arraystretch}{1.15}
\setlength{\tabcolsep}{4pt}
\begin{tabular}{@{}lccccc@{}}
	\toprule
	arrangement & $H_n$ & $H_n^2/n^{k-3}$ & $\sigma(Y_n)/n^{k-3}$ & no Ulrich for & ball quotient\\
	\midrule
	Hesse ($k=12$) & $4A-E$ & $7$ & $3n^2-11$ & $n\ge3$ & $n=3$\\
	dual Hesse ($k=9$) & $6A-E$ & $24$ & $2n^2-13$ & $n\ge5$ & $n=5$\\
	complete quadrangle ($k=6$) & $3A-E$ & $5$ & $(n^2-10)/3$ & $n\ge6$ & $n=5$\\
	\bottomrule
\end{tabular}
\caption{The three families of \cite{An1}; in each, $H_n^2/\sigma(Y_n)\to0$.}\label{tab:families}
\end{table}

Lopez combined Proposition~\ref{prop:lopez} with the Hesse pairs.

\begin{theorem}[{\cite[Thm.~1]{Lo}}]\label{thm:lopez}
Let $m\ge3$, let $(Y_n,H_n)$ be the Hesse pairs, $Z$ a smooth projective $(m-2)$-dimensional variety with very ample $M$, $X=Y_n\times Z$, $H=H_n\boxtimes M$, and $X'\in|aH|$ smooth with $a\ge1$. For $n\gg0$, the pairs $(X,H)$, $(X',H|_{X'})$ and all their successive hyperplane sections down to dimension $2$ carry no Ulrich bundle; $X'$ is not isomorphic to a nontrivial product.
\end{theorem}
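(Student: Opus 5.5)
The plan is to reduce everything to Proposition~\ref{prop:lopez}, using Lemma~\ref{lem:excess} to handle the hypersurface section $X'$, and to show that the excess is positive for $n\gg0$. Set $X=Y_n\times Z$ with projections $p,q$, and $H=p^*H_n+q^*M$. The first step is to check that $H$ is very ample, which holds because $H_n$ is very ample (Proposition~\ref{prop:hesse-va}) and $M$ is very ample, so the Segre embedding applies. The second step is to compute $I(X,H)=(K_X^2-2c_2(X))\cdot H^{m-2}-(m+1)H^m$ as a polynomial in the numerical data of $Y_n$. We have $K_X=p^*K_Y+q^*K_Z$ and $c_2(X)=p^*c_2(Y)+p^*K_Y\cdot q^*K_Z+q^*c_2(Z)$; the middle term uses $c_1(Y)c_1(Z)=K_YK_Z$. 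Expanding $H^{m-2}$ by the binomial formula, the only terms with the dimension split $(2,m-2)$ are the relevant ones:
\[
(K_X^2-2c_2(X))\cdot H^{m-2}=(K_Y^2-2c_2(Y))\,M^{m-2}+\text{(terms of the form }(K_Y\cdot H_n)\cdot(\ldots)\text{ and }H_n^2\cdot(\ldots)).
\]
Here the mixed terms are $2p^*K_Y\,q^*K_Z\cdot H^{m-2}-2p^*K_Yq^*K_Z\cdot H^{m-2}$, which cancel because $K_X^2$ contributes $2K_YK_Z$ and $2c_2$ contributes $-2K_YK_Z$. The remaining term is $(K_Z^2-2c_2(Z))$ paired with $\binom{m-2}{2}H_n^2M^{m-4}$. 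Likewise $H^m=\binom{m}{2}H_n^2M^{m-2}$.

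Using $K_Y^2-2c_2(Y)=3\sigma(Y_n)$, this gives
\[
I(X,H)=3\sigma(Y_n)M^{m-2}+H_n^2\Bigl[\tbinom{m-2}{2}(K_Z^2-2c_2(Z))M^{m-4}-(m+1)\tbinom m2 M^{m-2}\Bigr].
\]
By Theorem~\ref{thm:an1}, $\sigma(Y_n)=(3n^2-11)n^9$ and $H_n^2=7n^9$, while the bracket is a constant $c$ depending only on $(Z,M,m)$. Hence $I(X,H)=n^9\bigl(3(3n^2-11)M^{m-2}+7c\bigr)$, which is positive and of order $n^{11}$ for $n\gg0$.

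For $X'\in|aH|$, Lemma~\ref{lem:excess} gives $I(X',H')=a[I(X,H)-(a^2-1)H^m]$. Since $H^m=\binom m2 7n^9M^{m-2}$ grows only like $n^9$, this is again positive for $n\gg0$; the threshold depends on $a$, as the statement allows. Proposition~\ref{prop:lopez}, applied to $(X,H)$ and to $(X',H')$ (with $H'$ very ample, $\dim X'=m-1\ge2$), then excludes Ulrich bundles on these pairs and on all their successive smooth hyperplane sections. When $m-1=2$ the surface case of Proposition~\ref{prop:ulrich} suffices.

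The main obstacle is the final claim that $X'$ is not a nontrivial product. I would argue as follows. By Lefschetz (since $\dim X'\ge2$ and $aH$ is ample), $H^1(X')\cong H^1(X)$ and $\mathrm{Pic}$ restricts compatibly. If $X'\cong V\times W$, then $K_{X'}=(K_X+aH)|_{X'}$ would have to decompose accordingly. The cleanest route is probably via the Albanese or the fibration structure: either the factors would have to come from $Y_n$ and $Z$, which is impossible because $X'\to Y_n$ is surjective with varying fibers that are divisors in $Z$, or one compares the signature-type invariants. I would first try to show that any product decomposition induces a splitting of the ample cone that is incompatible with the ample class $H|_{X'}$ being primitive in both directions. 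I expect this step to need the most care.
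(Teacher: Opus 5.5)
Your Ulrich part is correct, and it follows the route the paper's ingredients indicate. The paper quotes this theorem from \cite{Lo} without proof. The mixed terms $p^*K_{Y_n}\cdot q^*K_Z$ cancel, since $K^2-2c_2=2\ch_2(T)$ is additive on $T_X=p^*T_{Y_n}\oplus q^*T_Z$. Only the $j=0$ and $j=2$ terms of $H^{m-2}$ survive, as in Corollary~\ref{cor:products} for $Z=\PP^{m-2}$. Then $I(X,H)$ grows like $n^{11}$ while $H^m$ grows like $n^9$. Lemma~\ref{lem:excess} gives $I(X',H|_{X'})>0$ for $n$ large in terms of $a,m,Z,M$. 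Finally, Proposition~\ref{prop:lopez} (Proposition~\ref{prop:ulrich} when $\dim X'=2$) excludes Ulrich bundles on both pairs and on their successive smooth hyperplane sections.

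The gap is the last assertion: you have not proved that $X'$ is not a nontrivial product, and the strategies you list are not yet an argument. The claim concerns an abstract isomorphism $X'\iso V\times W$. The projections $X'\to Y_n$ and $X'\to Z$, with their varying fibres, exclude only product structures compatible with those projections. An isomorphism of Picard groups with $X$ (Grothendieck--Lefschetz) needs $\dim X'\ge3$, so it is unavailable for $m=3$. The condition ``primitive in both directions'' has no content, since nothing forces $H|_{X'}$ to become a box product under a hypothetical isomorphism.

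What you need is an invariant of the isomorphism class of $X'$ that no nontrivial product has. For $m=3$ the excess itself supplies one. A product of curves $C_1\times C_2$ has $K^2=8(g_1-1)(g_2-1)=2c_2$, i.e.\ signature $0$. On the other hand, $I(X',H|_{X'})>0$ gives $K_{X'}^2-2c_2(X')>3(H|_{X'})^2>0$. For $\dim X'\ge3$ a genuine argument is still required, and the survey does not provide one; this part of the statement rests entirely on \cite{Lo}.
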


\section{No aCM bundles: the graded theorem}\label{sec:acm}

Throughout this section $(Y,H)$ is a smooth surface with $|H|$ base-point-free and $\varphi_H$ finite; $d=H^2$, $\kappa=K_Y\cdot H$.

\begin{theorem}[{\cite[Thm.~1.1]{An2}}]\label{thm:acm}
If $(Y,H)$ carries a nonzero aCM vector bundle, then $H^2\ge\sigma(Y)$. Equivalently, if $H^2<\sigma(Y)$ there is no nonzero aCM bundle on $(Y,H)$, of any rank, semistable or not.
\end{theorem}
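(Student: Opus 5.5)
The plan is to push everything down to $\PP^2$ along a finite map and compare two computations of the same Chern character. Since $|H|$ is base-point-free and $\varphi_H$ is finite, three general sections of $H$ define a finite surjective morphism $f\colon Y\to\PP^2$ with $f^*\OO(1)=\OO_Y(H)$ and $\deg f=d$.

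\textbf{Step 1: the push-forward splits.} Let $\cE$ be a nonzero aCM bundle of rank $\rho$. Because $f$ is finite, $\cF=f_*\cE$ is a vector bundle of rank $N=\rho d$ on $\PP^2$ ($Y$ is Cohen--Macaulay and $\PP^2$ is smooth, so finite flat). The projection formula gives $H^1(\PP^2,\cF(t))=H^1(Y,\cE(t))=0$ for all $t$. Horrocks' criterion on $\PP^2$ then gives $\cF\iso\bigoplus_{i=1}^N\OO(a_i)$. A split bundle satisfies
\[
\ch_2(\cF)-\frac{c_1(\cF)^2}{2N}=\frac12\Bigl(\sum a_i^2-\frac{(\sum a_i)^2}{N}\Bigr)=\frac N2\,\mathrm{Var}(a_i)\ \ge\ 0 ,
\]
so the ``variance'' side is nonnegative.

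\textbf{Step 2: compute the same quantity on $Y$.} Grothendieck--Riemann--Roch for $f$, $f_*(\ch(\cE)\,\mathrm{td}(Y))=\ch(\cF)\,\mathrm{td}(\PP^2)$, expresses $c_1(\cF)$ and $\ch_2(\cF)$ through $\rho$, $c_1(\cE)\cdot H$, $c_1(\cE)\cdot K_Y$, $\ch_2(\cE)$, $K_Y^2$, $c_2(Y)$, $\kappa=K_Y\cdot H$ and $d$. Rewrite $\ch_2(\cE)$ as $\frac{c_1(\cE)^2}{2\rho}-\frac{\Delta(\cE)}{2\rho}$. Collect the terms in $c_1(\cE)$ into squares of the form $\bigl(c_1(\cE)-\tfrac{\rho}{2}K_Y\bigr)^2$ and the $H$-components. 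After this, the variance identity should read
\[
\tfrac N2\,\mathrm{Var}(a_i)=\rho\cdot\tfrac{\sigma(Y)-d}{c}\;-\;(\text{Bogomolov term})\;-\;(\text{Hodge-index term})
\]
for a positive constant $c$. The sanity check is $\cE=\OO_Y$: then $f_*\OO_Y$ is split with $a_1=0$ and the others $\le0$ determined by the Hilbert function, and the identity should reduce to a relation among $K^2$, $\chi(\OO_Y)$, $\kappa$ and $d$. The Hodge index theorem enters because $D^2\le (D\cdot H)^2/d$ for the $H$-orthogonal part of $D=c_1(\cE)-\tfrac\rho2K_Y$. The goal of this step is: nonnegativity of the variance forces $\sigma(Y)\le d$ once the other two terms are shown to have the right sign.

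\textbf{Step 3: the main obstacle, nonsemistable $\cE$.} Bogomolov's inequality $\Delta(\cE)\ge0$ is only available for semistable $\cE$, while the theorem covers every aCM bundle. I would take the Harder--Narasimhan filtration $0\subset\cG_1\subset\dots\subset\cG_s=\cE$ with $\mu_H$-semistable factors $\cG_j/\cG_{j-1}$ and use two facts:
\begin{itemize}
\item the discriminant of $\cE$ decomposes as the sum of the factors' discriminants (each $\ge0$ by Bogomolov) plus a term built from $(\rho_j c_1(\cE)-\rho\, c_1(\text{factor}_j))^2$, which is controlled by Hodge index through the slopes;
\item the variance of $(a_i)$ decomposes compatibly into within-factor and between-factor parts, so the slope spread of the Harder--Narasimhan factors is absorbed by the between-factor part of the variance.
\end{itemize}
The delicate point is making these two decompositions match exactly. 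That matching is the identity the paper later records as the synthesis proposition; here I would prove it directly by applying GRR to each factor and summing, using additivity of $\ch$. I expect the comparison of the slope differences with the distribution of the $a_i$ to need most care.

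With that in place, every correction term has the favourable sign, and nonnegativity of the variance gives $\rho(\sigma(Y)-d)\le0$, that is, $H^2\ge\sigma(Y)$.
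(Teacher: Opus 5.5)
Steps~1 and~2 follow the paper's route, and so does the reduction to the Harder--Narasimhan filtration. There is a sign slip in Step~2: the single-bundle identity is $\frac N2\,v_a=\frac{\rho(d-\sigma)}{8}-\frac\rho2\bigl(\delta(\cE)-\alpha(\cE)^2\bigr)$, so $c=8$ and the factor is $d-\sigma$. The genuine gap is in Step~3, at the point you flag.

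Write $G_j=\cG_j/\cG_{j-1}$, $w_j=\rk G_j/\rho$ and $\nu_j=c_1(G_j)/\rk G_j$. Additivity of $c_1$ and $\ch_2$ gives $\delta(\cE)=\sum_jw_j\delta(G_j)-\sum_jw_j(\nu_j-\nu)^2$. The $H$-orthogonal part of the last sum combines with $-\alpha(\cE)^2$ into $-\sum_jw_j\alpha(G_j)^2$, which is $\ge0$ by Hodge index. Its $H$-part, however, survives with the \emph{unfavourable} sign: $-\frac1d\sum_jw_j\bigl(\mu_H(G_j)-\mu_H(\cE)\bigr)^2=-d\,v_b$. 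So not every correction term has the right sign. What you actually obtain is the defect identity of Proposition~\ref{prop:defect},
\[
\frac{d-\sigma}{4}=d\,(v_a-v_b)+\sum_jw_j\bigl(\delta(G_j)-\alpha(G_j)^2\bigr),
\]
and nonnegativity of the variance, $v_a\ge0$, only yields $\sigma\le d(1+4v_b)$. This is the weaker local mechanism of Section~\ref{sec:synthesis}: there only $v\ge0$ is available, a window of width one gives $v_b\le\frac14$, and one gets $\sigma\le2d$. To conclude $\sigma\le d$ you need $v_b\le v_a$.

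Your proposed route to this, the variance of $(a_i)$ ``decomposing compatibly'' into within-factor and between-factor parts, does not work. The subsheaves $f_*\cG_j\subset\bigoplus\OO(a_i)$ need not be sums of some of the $\OO(a_i)$, and the quotients $f_*G_j$ need not split. Their Chern-character variances are not nonnegative either: by Proposition~\ref{prop:synthesis}, $v(f_*G_j)\le\frac14-\frac{\sigma}{4d}$, which is negative precisely in the case $\sigma>d$ you want to exclude. The formal decomposition $v_a=\sum_jw_j\,v(f_*G_j)+v_b$ is true, but it gives nothing without an independent inequality.

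The missing ingredient is the majorization Lemma~\ref{lem:maj}, which has three parts:
\begin{enumerate}
\item A rank-$p$ subsheaf of $\bigoplus\OO(a_i)$ with $a_1\ge\dots\ge a_N$ has degree $\le a_1+\dots+a_p$: map its determinant into $\wedge^p$.
\item Since $b_j=(\mu_H(G_j)-\kappa/2)/d-\frac32$ is increasing in $\mu_H$, the $b_j$ are strictly decreasing. Hence the degree polygon of the pushed-forward filtration lies below the concave polygon of $(a_i)$, with the same endpoints (Shatz).
\item Karamata's inequality, via Abel summation, then gives $\sum_jw_j(b_j+c)^2\le\frac1N\sum_i(a_i+c)^2$ for every $c$, and in particular $v_b\le v_a$.
\end{enumerate}
With this lemma added, your outline becomes the paper's proof.
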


\begin{theorem}[{\cite[Thm.~1.3]{An2}}]\label{thm:gradedmcm}
If $H^2<\sigma(Y)$, then $R(Y,H)$ has no graded MCM module, $\depth_\m R=2$, and the vertex of $\Spec R$ is an isolated non-Cohen--Macaulay singularity. If $H$ is very ample, the homogeneous coordinate ring of $Y\subset\PP^N$ has no graded MCM module either.
\end{theorem}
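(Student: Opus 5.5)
The argument has three parts: reduce from modules to aCM bundles, and from aCM bundles to the aCM obstruction of Theorem~\ref{thm:acm}; then add the two statements about $\depth_\m R$ and the vertex, and finally the very ample case.

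\emph{Step 1: no graded MCM module.} Suppose $M$ is a graded MCM $R$-module. By Lemma~\ref{lem:acm-mcm}, $\widetilde M$ is an aCM vector bundle of positive rank on $(Y,H)$. By Theorem~\ref{thm:acm} this forces $H^2\ge\sigma(Y)$, contradicting the hypothesis $H^2<\sigma(Y)$.

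\emph{Step 2: $\depth_\m R=2$.} Since $R$ is a normal domain of dimension $3$, Serre's condition $S_2$ gives $\depth_\m R\ge2$. We have the identifications
\[
H^2_\m(R)\cong\bigoplus_{t\in\Z}H^1(Y,\OO_Y(t)),
\]
because $R=\Gamma_*(\OO_Y)$ is saturated (it equals $\bigoplus_{t\ge0}H^0(\OO_Y(t))$ in nonnegative degrees, and the negative degrees vanish by ampleness). If $\depth R=3$, then $R$ itself would be a graded MCM module, contradicting Step 1. Equivalently, $\OO_Y$ would be aCM, contradicting Theorem~\ref{thm:acm}. Hence $\depth_\m R=2$.

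\emph{Step 3: the vertex.} The punctured spectrum is smooth by \cite[Lemma~1.2]{An2}, so the non-Cohen--Macaulay locus is exactly the vertex. The vertex is therefore an isolated singularity, and it is not Cohen--Macaulay.

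\emph{Step 4: the very ample case.} Let $S=\C[x_0,\dots,x_N]$ and let $A=S/I_Y$ be the homogeneous coordinate ring. Suppose $N$ is a graded MCM $A$-module, of depth $3$. Then $\widetilde N$ is a sheaf on $Y$, and depth $\ge2$ gives $N=\Gamma_*(\widetilde N)$ in the relevant degrees, with $H^1(Y,\widetilde N(t))=H^2_\m(N)_t=0$ for all $t$. The sheaf $\widetilde N$ satisfies $S_2$ of full dimension; on a smooth surface it is reflexive, hence locally free, and its support is all of $Y$ because $A$ is a domain and $\dim N=3$. So $\widetilde N$ is a nonzero aCM bundle, again contradicting Theorem~\ref{thm:acm}.

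The main obstacle is this last step: checking that $\widetilde N$ is locally free of positive rank. The local depth at points of the punctured cone transfers to depth $2$ at points of $Y$, and this should suffice; alternatively, $\Gamma_*(\widetilde N)$ is a graded MCM $R$-module, which reduces directly to Step 1.
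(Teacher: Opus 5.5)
Your proposal is correct and follows the paper's route: Lemma~\ref{lem:acm-mcm} turns a graded MCM module into a nonzero aCM bundle, which Theorem~\ref{thm:acm} excludes. Applying this to $R$ itself, together with normality ($S_2$) and smoothness of the punctured spectrum, gives $\depth_\m R=2$ and the statement about the vertex. The very ample case is the same argument applied to $\widetilde N$ over the homogeneous coordinate ring, or equivalently to $\Gamma_*(\widetilde N)$ over $R$.
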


The proof transports the problem to $\PP^2$, where aCM bundles split. Composing $\varphi_H$ with a general linear projection gives a finite flat morphism $\pi\colon Y\to\PP^2$ with $\pi^*\OO(1)=\OO_Y(H)$, $\deg\pi=d$, and $H^i(Y,\cG(t))=H^i(\PP^2,(\pi_*\cG)(t))$ for all $i$ and $t$.

\begin{lemma}[{\cite[Lemmas~2.1, 2.2]{An2}}]\label{lem:horrocks}
A vector bundle $\cE$ on $Y$ is aCM if and only if $\pi_*\cE\iso\bigoplus_{j=1}^N\OO_{\PP^2}(a_j)$, where $N=d\cdot\rk\cE$. For a coherent sheaf $\cG$ of positive rank on $Y$, with $h$ the line class of $\PP^2$,
\[
\mu_h(\pi_*\cG)=\frac{\mu_H(\cG)-\kappa/2}{d}-\frac32 .
\]
\end{lemma}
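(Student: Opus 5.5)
The proof has two parts: the splitting criterion, obtained by transporting cohomology to $\PP^2$ and applying Horrocks' theorem, and the slope formula, obtained by comparing the linear coefficients of two Hilbert polynomials. Both rest on the basic properties of $\pi$.

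\emph{Properties of $\pi$.} Since $\pi$ is finite, hence affine, $R^i\pi_*=0$ for $i>0$. By the projection formula, $(\pi_*\cG)(t)=\pi_*(\cG\otimes\pi^*\OO(t))=\pi_*(\cG(t))$. Together these give the identification $H^i(Y,\cG(t))=H^i(\PP^2,(\pi_*\cG)(t))$ for every coherent $\cG$, all $i$ and all $t$, as recalled before the lemma. The morphism $\pi$ is flat by miracle flatness, since it is finite between smooth varieties of the same dimension. Its degree is $H^2=d$, because $\pi^*\OO(1)=\OO_Y(H)$ and $h^2=1$.

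\emph{The splitting criterion.} Let $\cE$ be a vector bundle of rank $\rho$ on $Y$. By flatness, $\pi_*\cE$ is locally free of rank $N=d\rho$. By the identification above, $\cE$ is aCM exactly when $H^1(\PP^2,(\pi_*\cE)(t))=0$ for all $t$.
\begin{itemize}[leftmargin=2em]
\item If $\pi_*\cE$ splits as a sum of line bundles, this vanishing holds because $H^1(\PP^2,\OO(a))=0$ for every $a$.
\item Conversely, if the vanishing holds, Horrocks' splitting criterion on $\PP^2$ says that a vector bundle with no intermediate cohomology is a direct sum of line bundles. Hence $\pi_*\cE\iso\bigoplus_{j=1}^N\OO_{\PP^2}(a_j)$.
\end{itemize}
This step is standard and I expect no obstacle here.

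\emph{The slope formula.} I would compare the linear coefficients of the two Hilbert polynomials in the equality $\chi(Y,\cG(t))=\chi(\PP^2,(\pi_*\cG)(t))$. Let $\cG$ have rank $\rho>0$, and set $\cF=\pi_*\cG$, which has rank $\rho d$ because $\pi$ has degree $d$.
\begin{itemize}[leftmargin=2em]
\item Riemann--Roch on $Y$ holds for arbitrary coherent sheaves via $\ch$ and $\operatorname{td}$, and gives
\[
\chi(\cG(t))=\tfrac12\rho d\,t^2+t\bigl(c_1(\cG)\cdot H-\tfrac12\rho\kappa\bigr)+\chi(\cG).
\]
\item On $\PP^2$, with $\operatorname{td}=1+\tfrac32h+h^2$, Riemann--Roch gives
\[
\chi(\cF(t))=\tfrac12\rho d\,t^2+t\bigl(c_1(\cF)+\tfrac32\rho d\bigr)+\chi(\cF).
\]
\end{itemize}
Equating the coefficients of $t$ yields $c_1(\cF)=c_1(\cG)\cdot H-\tfrac12\rho\kappa-\tfrac32\rho d$. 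Dividing by $\rho d$ gives
\[
\mu_h(\pi_*\cG)=\frac{\mu_H(\cG)-\kappa/2}{d}-\frac32 .
\]

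The only point needing care is torsion. Riemann--Roch holds for any coherent sheaf, so torsion in $\cG$ causes no problem. Torsion supported on curves contributes to $c_1$ on both sides, and the identity is stated in terms of $c_1$ and rank only, so no torsion-freeness is required. I therefore do not expect a genuine obstacle in this part either.
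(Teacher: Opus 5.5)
Your proposal is correct and follows the same route the paper uses (the lemma itself is cited from \cite{An2}). You obtain the splitting criterion from miracle flatness, the projection formula and Horrocks' criterion, as in the aCM item of Section~\ref{ssec:identity}, and the slope formula by comparing the coefficients of $t$ in $\chi(\cG(tH))=\chi((\pi_*\cG)(t))$ via Riemann--Roch, exactly as in the proofs of Propositions~\ref{prop:synthesis} and~\ref{prop:rr-m}.
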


Let $\cE$ be an aCM bundle of rank $r$ with Harder--Narasimhan factors $\cQ_i$ of ranks $r_i$, weights $w_i=r_i/r$ and slopes $\mu_H(\cQ_1)>\dots>\mu_H(\cQ_s)$. Then $\pi_*\cE\iso\bigoplus_{j=1}^N\OO(a_j)$ has a filtration with successive quotients $\pi_*\cQ_i$, of slopes $b_i=(\mu_H(\cQ_i)-\kappa/2)/d-\frac32$, $b_1>\dots>b_s$. Put $\bar a=\frac1N\sum_ja_j=\sum_iw_ib_i$ and
\[
v_a=\frac1N\sum_j(a_j-\bar a)^2,\qquad v_b=\sum_iw_i(b_i-\bar a)^2 .
\]

\begin{lemma}[{\cite[Lemma~2.4]{An2}}]\label{lem:maj}
Let $V=\bigoplus_{j=1}^N\OO_{\PP^2}(a_j)$, and let $0=V_0\subset V_1\subset\dots\subset V_s=V$ be a filtration by subsheaves, with successive quotients of ranks $n_i>0$ and slopes $b_1\ge b_2\ge\dots\ge b_s$. Then for every $c\in\R$
\[
\sum_{i=1}^s\frac{n_i}{N}(b_i+c)^2\ \le\ \frac1N\sum_{j=1}^N(a_j+c)^2 .
\]
In particular $v_b\le v_a$.
\end{lemma}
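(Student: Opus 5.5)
Since $\sum_i n_ib_i=\deg V=\sum_j a_j$, the two sides of the inequality, viewed as quadratic polynomials in $c$, have the same coefficient $N$ of $c^2$ and the same linear coefficient $2\sum_j a_j$. So it suffices to treat $c=0$, that is, to show $\sum_i n_ib_i^2\le\sum_j a_j^2$. The case $c=\bar a$ then gives $v_b\le v_a$. I plan to deduce the $c=0$ case from the Hardy--Littlewood--Pólya (Karamata) majorization inequality for the convex function $x\mapsto x^2$.

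First I set up the two vectors in $\R^N$. Reorder so that $a_1\ge a_2\ge\dots\ge a_N$. Let $\beta\in\R^N$ be the vector in which $b_1$ is repeated $n_1$ times, then $b_2$ repeated $n_2$ times, and so on. Since $b_1\ge\dots\ge b_s$, the vector $\beta$ is also non-increasing. I must show that $\beta$ is majorized by $a$, namely $\sum_{j\le p}\beta_j\le\sum_{j\le p}a_j$ for all $p$, with equality at $p=N$. Equality at $p=N$ is the degree identity above.

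The geometric input is needed only at the breakpoints $p=r_k=n_1+\dots+n_k$. There $\sum_{j\le r_k}\beta_j=\deg V_k$, and I claim $\deg V_k\le a_1+\dots+a_{r_k}$. To see this, take the determinant of the inclusion $V_k\hookrightarrow V$. The induced map $\det V_k=(\Lambda^{r_k}V_k)^{\vee\vee}\to\Lambda^{r_k}V$ is nonzero, because the inclusion is generically an injection of vector bundles, and it extends over codimension $2$ since the target is locally free. Here $\det V_k\iso\OO(e)$ with $e=\deg V_k$, and
\[
\Lambda^{r_k}V=\bigoplus_{|J|=r_k}\OO\Bigl(\sum_{j\in J}a_j\Bigr).
\]
Some component of the map $\OO(e)\to\OO(\sum_{j\in J}a_j)$ is nonzero, so $e\le\sum_{j\in J}a_j\le a_1+\dots+a_{r_k}$.

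Between two breakpoints $r_{k-1}<p<r_k$ I will use a concavity argument. The partial sums of $a$ form a concave function of $p$, because $a$ is non-increasing. The partial sums of $\beta$ are affine in $p$ on $[r_{k-1},r_k]$, since $\beta$ is constant on that block. A concave function that dominates an affine function at both endpoints of an interval dominates it on the whole interval, so the majorization holds for every $p$.

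Karamata's inequality then gives $\sum_j\beta_j^2\le\sum_j a_j^2$, which is exactly the case $c=0$. I expect the main obstacle to be the degree bound for the possibly non-saturated subsheaves $V_k$. The determinant argument handles it, provided one is careful that degree and rank are those of the torsion-free sheaf $V_k$, and that $\deg V_k=\sum_{i\le k}n_ib_i$ holds by additivity of degree and rank along the filtration.
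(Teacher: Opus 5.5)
Your proof is correct and follows the paper's argument, which is given for the $\PP^m$ version (Lemma~\ref{lem:maj-m}): the determinant bound $\deg V_k\le a_1+\dots+a_{r_k}$, the concave-versus-affine interpolation between breakpoints, and then Karamata's inequality. Your reduction to $c=0$ via the equal means replaces the paper's Abel summation with the shift $2c$ built in, and the two are equivalent. One small slip: the specialization that gives $v_b\le v_a$ is $c=-\bar a$, not $c=\bar a$.
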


For a torsion-free sheaf $\cF$ of rank $\rho$ on $Y$ write $\nu=c_1(\cF)/\rho\in\NS(Y)_\Q$, $\mu_H=\nu\cdot H$, $\delta=\Delta(\cF)/\rho^2$, $\beta=\nu-K_Y/2$, and $\alpha=\beta-\frac{\beta\cdot H}{d}H$, so that $\alpha\cdot H=0$ and $\alpha^2\le0$.

\begin{lemma}[{\cite[(2)]{An2}}]\label{lem:rr}
For all $t\in\Z$,
\[
\frac{\chi(\cF(t))}{\rho}=\frac d2\Bigl(t+\frac{\mu_H-\kappa/2}{d}\Bigr)^2-\frac{\sigma}{8}+\frac{\alpha^2-\delta}{2}.
\]
\end{lemma}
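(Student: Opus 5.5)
The plan is a direct Riemann--Roch computation: rewrite the Riemann--Roch formula in terms of the normalized invariants $\nu,\delta,\beta,\alpha$, and then account for the twist by $tH$.

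First I treat $t=0$. Hirzebruch--Riemann--Roch on the surface $Y$, for a torsion-free sheaf of rank $\rho$, gives
\[
\chi(\cF)=\rho\,\chi(\OO_Y)+\tfrac12\,c_1\cdot(c_1-K_Y)-c_2 .
\]
Substitute $c_1=\rho\nu$, and eliminate $c_2$ using the definition of the discriminant:
\[
c_2=\frac{\Delta+(\rho-1)c_1^2}{2\rho},\qquad\text{so}\qquad \frac{c_2}{\rho}=\frac{\delta}{2}+\frac{(\rho-1)\nu^2}{2}.
\]
Dividing the Riemann--Roch formula by $\rho$, the $\nu^2$ terms collapse, since $\rho\nu^2/2-(\rho-1)\nu^2/2=\nu^2/2$. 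This leaves
\[
\frac{\chi(\cF)}{\rho}=\chi(\OO_Y)+\tfrac12\bigl(\nu^2-\nu\cdot K_Y\bigr)-\tfrac{\delta}{2}.
\]

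Next I complete the square. Since $\nu^2-\nu\cdot K_Y=\beta^2-K_Y^2/4$, we get
\[
\frac{\chi(\cF)}{\rho}=\chi(\OO_Y)-\frac{K_Y^2}{8}+\frac{\beta^2-\delta}{2}.
\]
By definition $\chi(\OO_Y)-K_Y^2/8=-\sigma/8$. The orthogonal decomposition $\beta=\alpha+\frac{\beta\cdot H}{d}H$ with $\alpha\cdot H=0$ gives
\[
\beta^2=\alpha^2+\frac{(\beta\cdot H)^2}{d},\qquad \beta\cdot H=\mu_H-\frac{\kappa}{2}.
\]

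Finally I pass to $\cF(t)$. Twisting by $tH$ replaces $\nu$ by $\nu+tH$. It leaves $\delta$ unchanged, because the discriminant is invariant under tensoring with line bundles, and it leaves $\alpha$ unchanged, because the added term $tH$ is absorbed entirely in the $H$-component. The only effect is $\beta\cdot H\mapsto\beta\cdot H+td$. Hence
\[
\frac{(\beta\cdot H+td)^2}{2d}=\frac d2\Bigl(t+\frac{\mu_H-\kappa/2}{d}\Bigr)^2,
\]
which is exactly the stated formula.

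There is no real obstacle here. The only care needed is the bookkeeping of the $\rho$-normalizations in the $c_2$ elimination, and checking that $\delta$ and $\alpha$ are twist-invariant.
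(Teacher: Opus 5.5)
Your proof is correct and takes essentially the paper's route. The same Riemann--Roch computation, eliminating $c_2$ through $\delta$, completing the square with $\nu\cdot(\nu-K_Y)=\beta^2-\frac14K_Y^2$, and splitting $\beta^2=\alpha^2+(\beta\cdot H)^2/d$, appears in the proof of Proposition~\ref{prop:synthesis}; the only difference is that the paper carries $t$ through Riemann--Roch directly, while you treat $t=0$ and then use the twist-invariance of $\delta$ and $\alpha$.
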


\begin{proposition}[Defect identity, {\cite[Prop.~2.6]{An2}}]\label{prop:defect}
Let $\cE$ be a nonzero aCM bundle with Harder--Narasimhan factors $\cQ_i$ and weights $w_i$, and let $v_a,v_b$ be as above. Then
\[
\frac{d-\sigma}{4}\;=\;d\,(v_a-v_b)\;+\;\sum_iw_i\bigl(\delta(\cQ_i)-\alpha(\cQ_i)^2\bigr).
\]
Every term on the right is $\ge0$: by Lemma~\ref{lem:maj}, by Bogomolov's inequality for the semistable factors, and by the Hodge index theorem. In particular $d\ge\sigma$, which proves Theorem~\ref{thm:acm}.
\end{proposition}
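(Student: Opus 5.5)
The plan is to compute the Hilbert polynomial $t\mapsto\chi(\cE(t))$ in two ways, once on $\PP^2$ through $\pi_*\cE$ and once on $Y$ through the Harder--Narasimhan filtration, and then compare the two quadratic polynomials in $t$. Both sides are polynomials in $t$, so we may evaluate the resulting identity at a convenient real value of $t$.

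\emph{First computation.} By Lemma~\ref{lem:horrocks}, $\pi_*\cE\iso\bigoplus_{j=1}^N\OO_{\PP^2}(a_j)$ with $N=dr$. Since $\pi$ is finite, $\chi(\cE(t))=\chi\bigl((\pi_*\cE)(t)\bigr)$. Using $\chi(\OO_{\PP^2}(x))=\tfrac12(x+1)(x+2)=\tfrac12(x+\tfrac32)^2-\tfrac18$, we get
\[
\frac{\chi(\cE(t))}{r}=\frac d2\cdot\frac1N\sum_j\Bigl(t+\tfrac32+a_j\Bigr)^2-\frac d8 .
\]

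\emph{Second computation.} By additivity of $\chi$ along the Harder--Narasimhan filtration and Lemma~\ref{lem:rr} applied to each $\cQ_i$ (with $\rho=r_i$),
\[
\frac{\chi(\cE(t))}{r}=\sum_i w_i\Bigl[\frac d2\Bigl(t+\frac{\mu_H(\cQ_i)-\kappa/2}{d}\Bigr)^2-\frac\sigma8+\frac{\alpha(\cQ_i)^2-\delta(\cQ_i)}2\Bigr].
\]
By the slope formula of Lemma~\ref{lem:horrocks}, $(\mu_H(\cQ_i)-\kappa/2)/d=b_i+\tfrac32$. Since $\pi_*\cQ_i$ has rank $dr_i$ and $c_1$ is additive in the filtration of $\pi_*\cE$, we also have $\bar a=\sum_iw_ib_i$.

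\emph{Comparison.} Equate the two expressions and substitute $t=-\tfrac32-\bar a$. This is legitimate because both sides are polynomials in $t$, and it makes the averages of squares centred. The first side becomes $\tfrac d2v_a-\tfrac d8$ and the second becomes $\tfrac d2v_b-\tfrac\sigma8+\tfrac12\sum_iw_i(\alpha_i^2-\delta_i)$, where $\alpha_i=\alpha(\cQ_i)$ and $\delta_i=\delta(\cQ_i)$. Rearranging and multiplying by $2$ gives
\[
\frac{d-\sigma}4=d(v_a-v_b)+\sum_iw_i\bigl(\delta_i-\alpha_i^2\bigr).
\]
As a consistency check, the $t^2$ and $t$ coefficients of the two sides agree automatically because $\bar a=\sum_iw_ib_i$.

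\emph{Signs.} We have $v_a-v_b\ge0$ by Lemma~\ref{lem:maj} with $c=-\bar a$, applied to the filtration of $\pi_*\cE$ with quotients $\pi_*\cQ_i$ of decreasing slopes $b_i$. Next, $\delta_i\ge0$ by Bogomolov's inequality for the $\mu_H$-semistable torsion-free factors $\cQ_i$. Finally, $-\alpha_i^2\ge0$ by the Hodge index theorem, since $\alpha_i\cdot H=0$ with $H$ ample. Together these give $d\ge\sigma$.

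The only delicate point is the bookkeeping of the normalizations: the ranks $dr_i$ of $\pi_*\cQ_i$, the identity $\bar a=\sum w_ib_i$, and the shift $\tfrac32$ linking $b_i$ to the Riemann--Roch centre. An error there would leave a spurious linear term in $t$.
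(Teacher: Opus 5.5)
Your proof is correct and is essentially the paper's argument: the survey obtains the identity from Proposition~\ref{prop:synthesis}, case (ii) after Table~\ref{tab:synthesis}, by comparing Riemann--Roch on $Y$ and on $\PP^2$ for each Harder--Narasimhan factor and decomposing $v(\pi_*\cE)=v_a$ along the pushed-forward filtration. Evaluating the two Hilbert polynomials at $t=-\frac32-\bar a$, as you do, carries out that comparison of constant terms and the variance decomposition in a single step, since $\chi(F(t))/N=\frac12\bigl(t+b+\frac32\bigr)^2+\frac{v(F)}2-\frac18$ (Remark~\ref{rem:chen-v}).
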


\begin{corollary}[{\cite[Cor.~1.4]{An2}}]\label{cor:hesse-graded}
For $n\ge3$, the section ring $R(Y_n,4A_n-E_n)$ of the Hesse pair is a three-dimensional normal graded $\C$-domain with an isolated non-Cohen--Macaulay singularity at the vertex, and it has no nonzero finitely generated graded MCM module. The same holds for the homogeneous coordinate ring of $Y_n\subset\PP^N$.
\end{corollary}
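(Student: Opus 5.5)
The corollary should follow by feeding the numerical input of the Hesse pairs into the graded theorem; no new argument is needed, only a check that each hypothesis holds. First I fix $n\ge3$ and set $H_n=4A_n-E_n$ on $Y_n$. By Proposition~\ref{prop:hesse-va}, $H_n$ is very ample for every $n\ge2$. Hence $|H_n|$ is base-point-free and $\varphi_{H_n}$ is an embedding, in particular finite. So $(Y_n,H_n)$ satisfies the standing hypotheses of Section~\ref{sec:acm}, and also the extra very-ampleness hypothesis in the last sentence of Theorem~\ref{thm:gradedmcm}.

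Second, I record the structural properties of $R=R(Y_n,H_n)$. Since $H_n$ is ample and globally generated, \cite[Lemma~1.2]{An2}, recalled before Lemma~\ref{lem:acm-mcm}, shows that $R$ is a finitely generated normal graded $\C$-domain of dimension $3$ with $\Proj R=Y_n$ and smooth punctured spectrum. In particular any non-Cohen--Macaulay locus of $\Spec R$ is contained in the vertex.

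Third, I supply the numerical inequality. Theorem~\ref{thm:an1} gives $H_n^2=7n^9<(3n^2-11)n^9=\sigma(Y_n)$ for $n\ge3$; indeed $3n^2-11\ge16>7$. Theorem~\ref{thm:gradedmcm} then applies. It gives that $R$ has no graded MCM module, that $\depth_\m R=2<3=\dim R$, so the vertex is a non-Cohen--Macaulay point, and hence, by the previous step, an isolated non-CM singularity. Its final clause, valid because $H_n$ is very ample, gives the same conclusion for the homogeneous coordinate ring of $Y_n\subset\PP^N$.

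There is no real obstacle. The only points needing care are: very ampleness must be available for all $n\ge3$, not just for large $n$, which Proposition~\ref{prop:hesse-va} provides; and the strict inequality $H_n^2<\sigma(Y_n)$ must already hold at $n=3$, which is the elementary comparison $7<3\cdot 9-11=16$.
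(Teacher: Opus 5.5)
Your proposal is correct and matches the paper: the corollary, quoted from \cite{An2}, is a direct application of Theorem~\ref{thm:gradedmcm}, using very ampleness of $H_n$ from Proposition~\ref{prop:hesse-va} and the inequality $7n^9<(3n^2-11)n^9$ for $n\ge3$ from Theorem~\ref{thm:an1}. You read the last sentence correctly: the final clause of Theorem~\ref{thm:gradedmcm} gives the absence of graded MCM modules for the homogeneous coordinate ring, but not its normality, which is projective normality of the embedding and is not asserted.
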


\begin{corollary}[{\cite[Cor.~4.3]{An2}}]\label{cor:qg}
For the complete quadrangle and $n\ge6$, let $H=3A_n-E_n$ on the quadrangle surface $Y_n$. Then $K_{Y_n}\sim(n-2)H$ (linear equivalence), so $R=R(Y_n,H)$ is quasi-Gorenstein with $\omega_R\iso R(n-2)$, and it has no graded MCM module. For $n=6$: $H^2=1080<\sigma=1872$, $K^2=17280$, $c_2=5832$, $\chi=1926$, $\omega_R\iso R(4)$.
\end{corollary}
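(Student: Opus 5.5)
The plan is to read everything off Proposition~\ref{prop:kummer} for the complete quadrangle ($k=6$, $t_3=4$, $t_2=3$, $T=4$), and then to invoke Theorem~\ref{thm:gradedmcm}.

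\textbf{Canonical class.} Only triple points carry exceptional curves, so every $E_{n,r}$ has $r=3$ and $\sum_r E_{n,r}=E_n$. Proposition~\ref{prop:kummer}(d) then gives
\[
K_{Y_n}=(3n-6)A_n-\bigl((n-1)-1\bigr)E_n=(n-2)(3A_n-E_n)=(n-2)H .
\]
The formula in (d) is an identity of divisor classes, obtained from adjunction for the complete intersection $X_n$ and the discrepancies of the cone singularities. I would check that it holds up to linear equivalence and not merely numerically. This is the delicate point: I would go back to the derivation in \cite{An1}, where $K_{X_n}=\OO_{X_n}((k-3)n-k)$ comes from adjunction and the discrepancy is computed locally at each cone point.

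\textbf{Quasi-Gorenstein.} Next I need that $H$ is ample and globally generated. I would take this from \cite[\S7.2]{An1}: there Lemma~\ref{lem:rees} is applied with $\nu=2$, using conics through the four triple points to generate $\cI_{\Sigma_n}(2)$, and this gives very ampleness. By \cite[Lemma~1.2]{An2}, $R$ is a normal graded domain of dimension $3$ with $\Proj R=Y_n$. For such a ring the graded canonical module is
\[
\omega_R=\bigoplus_{t\in\Z}H^0\bigl(Y_n,\omega_{Y_n}(t)\bigr),
\]
because $\omega_R$ is reflexive of depth $\ge2$ and agrees with $\Gamma_*(\omega_{Y_n})$ on the punctured spectrum. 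Substituting $\omega_{Y_n}\iso\OO_{Y_n}((n-2)H)$ gives $\omega_R\iso R(n-2)$.

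\textbf{No graded MCM module.} From Proposition~\ref{prop:kummer}(d),(e),
\[
H^2=(9-T)n^3=5n^3,\qquad \sigma=\tfrac{n^3}{3}(n^2-10).
\]
Hence $H^2<\sigma$ if and only if $n^2>25$, that is, for $n\ge6$. Theorem~\ref{thm:gradedmcm} then excludes graded MCM modules.

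\textbf{The case $n=6$.} Direct arithmetic gives:
\begin{itemize}
\item $H^2=1080$ and $\sigma=72\cdot26=1872$;
\item $K^2=16\cdot1080=17280$;
\item $c_2=(K^2-3\sigma)/2=5832$;
\item $\chi=(K^2+c_2)/12=1926$, by Noether's formula.
\end{itemize}

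The main obstacle is the linear-equivalence statement for $K_{Y_n}$, together with citing the base-point-freeness of $H$ correctly. The numerical part is routine.
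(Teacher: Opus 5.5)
Your proposal is correct and follows the route implicit in the paper's citation of \cite[Cor.~4.3]{An2}: read $K_{Y_n}\sim(n-2)(3A_n-E_n)$ off Proposition~\ref{prop:kummer}(d), identify $\omega_R$ with $\Gamma_*(\omega_{Y_n})$, and apply Theorem~\ref{thm:gradedmcm} with $5n^3<\frac{n^3}{3}(n^2-10)$ for $n\ge6$. The formula in (d) is already a linear equivalence, since $\omega_{X_n}\iso\OO_{X_n}(3n-6)$ by adjunction and $\omega_{Y_n}\otimes\rho^*\omega_{X_n}^{-1}$ is a line bundle trivial off the disjoint exceptional curves; one small correction is that the sections of $\cI_{\Sigma_n}(2)$ used with $\nu=2$ in Lemma~\ref{lem:rees} are quadrics in the Kummer coordinates of $X_n\subset\PP^5$ (products $z_iz_j$ for two opposite sides of the quadrangle), not conics pulled back from $\PP^2$, which have degree $2n$ on $X_n$.
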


\begin{proposition}[{\cite[Prop.~5.1]{An2}}]\label{prop:acm-highdim}
Let $X$ be smooth of dimension $m\ge2$ with $|H|$ base-point-free and $\varphi_H$ finite. If $X$ carries a nonzero aCM bundle $\cE$, that is, one with $H^i(X,\cE(t))=0$ for $1\le i\le m-1$ and all $t$, then $(m+1)H^m\ge(K_X^2-2c_2(X))\cdot H^{m-2}$.
\end{proposition}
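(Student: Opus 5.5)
The plan is to reduce to the surface case, Theorem~\ref{thm:acm}, by cutting $X$ with $m-2$ general members of $|H|$. Then I would translate the surface inequality back to $X$ through adjunction and the Chern class formula for a complete intersection.

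\emph{Step 1 (the section).} Since $|H|$ is base-point-free, Bertini's theorem lets us choose general divisors $D_1,\dots,D_{m-2}\in|H|$ such that every partial intersection $X_j=D_1\cap\dots\cap D_j$ is smooth of dimension $m-j$. Since $H$ is ample, each $X_j$ with $j\le m-2$ has dimension $\ge2$. Connectedness then follows from the Enriques--Severi--Zariski/Lefschetz vanishing $H^1(X_{j-1},\OO(-H))=0$ applied to $0\to\OO(-H)\to\OO\to\OO_{X_j}\to0$. Put $S=X_{m-2}$ and $H_S=H|_S$. Then $H_S$ is ample and globally generated, so $\varphi_{H_S}$ is finite, and $(S,H_S)$ satisfies the hypotheses of Section~\ref{sec:acm}.

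\emph{Step 2 (the restriction stays aCM).} Let $\cE_j=\cE|_{X_j}$. I would show by induction on $j$ that $H^i(X_j,\cE_j(t))=0$ for $1\le i\le m-j-1$ and all $t$. The twisted sequence
\[
0\to\cE_{j-1}(t-1)\to\cE_{j-1}(t)\to\cE_j(t)\to0
\]
sandwiches $H^i(\cE_j(t))$ between $H^i(\cE_{j-1}(t))$ and $H^{i+1}(\cE_{j-1}(t-1))$. For $1\le i\le m-j-1$ both indices lie in $[1,m-j]$, so both groups vanish by the induction hypothesis. At $j=m-2$ this gives $H^1(S,\cE|_S(t))=0$ for all $t$. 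Hence $\cE|_S$ is a nonzero aCM bundle on $(S,H_S)$, and Theorem~\ref{thm:acm} yields $3H_S^2\ge K_S^2-2c_2(S)$.

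\emph{Step 3 (Chern class bookkeeping).} Write $k=m-2$, $h=H|_S$ and $c_i=c_i(X)|_S$. The normal bundle of $S$ is $\OO_S(H)^{\oplus k}$, so $c(T_S)=c(T_X)|_S\,(1+h)^{-k}$. This gives
\[
c_1(S)=c_1-kh,\qquad c_2(S)=c_2-k\,c_1h+\tfrac{k(k+1)}2h^2 .
\]
Therefore $K_S^2-2c_2(S)=c_1^2-2c_2+\bigl(k^2-k(k+1)\bigr)h^2=(c_1^2-2c_2)-k\,h^2$. Since intersecting with $[S]$ is intersecting with $H^{m-2}$ on $X$, this reads $K_S^2-2c_2(S)=(K_X^2-2c_2(X))\cdot H^{m-2}-(m-2)H^m$, and $H_S^2=H^m$. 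Substituting into the inequality of Step 2 gives $(m+1)H^m\ge(K_X^2-2c_2(X))\cdot H^{m-2}$. This is consistent with Lemma~\ref{lem:excess} at $a=1$.

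The main obstacle is Step 2, the index bookkeeping in the cohomology vanishing. We only assume vanishing in the range $1\le i\le m-1$, not for $i=m$. One must check that the induction never calls on $H^{m-j}$ of $\cE_{j-1}$, and it does not. The connectedness and smoothness of $S$ in Step 1 are standard but must be stated, since Theorem~\ref{thm:acm} is for connected surfaces.
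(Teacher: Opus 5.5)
Your proof is correct, but it takes a different route from the paper. The statement is only cited in Section~\ref{sec:acm}. The paper re-proves it in item~(ii) of Section~\ref{ssec:identity}, working directly on $X$:
\begin{enumerate}[label=(\alph*)]
	\item the finite map $f\colon X\to\PP^m$ is flat by miracle flatness;
	\item $f_*\cE$ has no intermediate cohomology, so it splits by Horrocks' criterion;
	\item the Harder--Narasimhan filtration of $\cE$ itself is pushed forward and controlled by the majorization Lemma~\ref{lem:maj-m};
	\item the identity \eqref{eq:identity-m} then gives $I(X,H)\le0$. Its defect term is nonnegative by Bogomolov's inequality in dimension $m$ and by the Hodge index theorem on a surface section (Proposition~\ref{prop:rr-m}).
\end{enumerate}
The paper stresses that this is done ``now without restricting the bundle to a surface section''. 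That phrasing suggests your reduction is essentially the original argument of \cite{An2}.

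\emph{What your route buys.} Given Theorem~\ref{thm:acm}, the only new inputs are two. One is the vanishing induction, which works because the vanishing holds for all twists. The other is a two-line adjunction computation; your consistency check against Lemma~\ref{lem:excess} with $a=1$ is right. All stability arguments stay on the surface $S$ and are applied to $\cE|_S$. So you only need Bogomolov on surfaces and Horrocks on $\PP^2$.

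\emph{What the paper's route buys.} It gives an exact defect identity for $\cE$ on $X$ itself. More importantly, the same framework carries over verbatim to Theorem~\ref{thm:local-m}, where there is no evident way to cut down. For a small CM module $M$ over $\Rh(X,H)$ and a section $s$ of degree one, $M/sM$ is a module over $\Rh(X,H)/(s)$. That ring can differ from $\Rh(S,H_S)$: the map $R(X,H)/(s)\to R(S,H_S)$ has cokernel, in degree $t$, equal to the kernel of $s\colon H^1(X,\OO_X((t-1)H))\to H^1(X,\OO_X(tH))$.

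One small slip in your closing remark. The induction \emph{does} use $H^{m-j}(X_{j-1},\cE_{j-1}(t-1))$, namely when $i=m-j-1$. This is the top intermediate group of the $(m-j+1)$-dimensional $X_{j-1}$, and it is covered by the induction hypothesis. What the induction never uses is the top cohomology $H^{m-j+1}(\cE_{j-1})$; for $j=1$ this is the group $H^m(X,\cE(t))$ you had in mind. Your displayed argument is correct as written.
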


With $X=Y_n\times\PP^{m-2}$ and $H=H_n\boxtimes\OO(1)$ the inequality fails as soon as $7(m-1)^2<3n^2-11$, and the Segre products $R(Y_n,H_n)\#\C[s_0,\dots,s_{m-2}]$ are normal graded domains of every dimension $m+1\ge3$ without graded MCM modules \cite[Cor.~5.3]{An2}.

\section{From graded to local}\label{sec:local}

\cite{An2} stops at level (R2) and leaves the local question open: Hochster's conjecture concerns the complete local ring $\Rh$, whose finitely generated modules need not be graded. This section fixes the framework of the two local arguments. Let $Y$ be a smooth surface and $H$ an ample, globally generated divisor; put $L=\OO_Y(H)$, $R=R(Y,H)$ and $\Rh$ its completion at the vertex. Three general sections $s_0,s_1,s_2$ of $L$ define a finite morphism $f\colon Y\to\PP^2$ with $f^*\OO(1)=L$ (the map $\pi$ of Section~\ref{sec:acm}); with $S=\C[x_0,x_1,x_2]\to R$, $x_i\mapsto s_i$, and $A=\C[[x_0,x_1,x_2]]$, the ring $\Rh=R\otimes_SA$ is finite over $A$.

\begin{proposition}[{\cite[Prop.~2.1]{Ch}, \cite[\S2]{OAI}}]\label{prop:model}
$\Rh$ is a three-dimensional complete Noetherian normal local domain with residue field $\C$. There is a proper morphism $p\colon W\to\Spec\Rh$ from a regular three-dimensional scheme $W$, obtained by base change from $\Tot(L^{-1})\to\Spec R$, such that
\begin{enumerate}[label=(\alph*)]
	\item the scheme-theoretic closed fibre is the zero section $Y\subset W$, an effective Cartier divisor, and $\OO_W(-Y)|_Y\iso L$, i.e.\ the conormal bundle of $Y$ is $\OO_Y(H)$;
	\item $p$ restricts to an isomorphism $U:=W\smallsetminus Y\iso\Spec\Rh\smallsetminus\{\m\}$;
	\item $\OO_W(-Y)$ is $p$-ample.
\end{enumerate}
\end{proposition}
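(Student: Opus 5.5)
We build $W$ as the total space of the line bundle $L^{-1}$ over $Y$, base-changed to the completion; the three properties (a)--(c) are then read off from the geometry of $\Tot(L^{-1})\to\Spec R$, while the algebraic statements about $\Rh$ are transferred from $R$.

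\emph{Properties of $\Rh$.} By \cite[Lemma~1.2]{An2}, $R$ is a finitely generated normal graded $\C$-domain of dimension $3$ with $R_0=\C$. The completion of an excellent normal local domain is again a normal domain, since excellence makes the formal fibres geometrically normal. Applying this to $R_\m$ shows that $\Rh$ is a three-dimensional complete normal local domain with residue field $\C$. Finiteness over $A$ follows from two facts: $R$ is finite over $S$, because $f$ is finite and $R$ is the section ring of an ample, globally generated bundle pulled back from $\OO(1)$; and $\Rh=R\otimes_SA$, since $\m$ and $(x_0,x_1,x_2)R$ have the same radical.

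\emph{Construction of $W$.} Let $V=\Tot(L^{-1})=\mathbf{Spec}_Y\bigoplus_{t\ge0}L^{t}$. This is a line bundle over the smooth surface $Y$, so $V$ is smooth of dimension $3$. There is a natural morphism $q\colon V\to\Spec R$, given by $R=\bigoplus_t H^0(L^t)=\Gamma(V,\OO_V)$. This morphism is proper: $L$ is semiample and ample, so $q$ is the contraction of the zero section, and $V\to\Spec R$ factors through $\mathbf{Spec}$ of the section algebra over $\Proj$, equivalently through the affine cone with the vertex blown up.

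Set $W=V\times_{\Spec R}\Spec\Rh$. Since $R\to\Rh$ is flat, $W$ is proper over $\Spec\Rh$. It is regular because $V$ is regular and the map $W\to V$ is a base change of a regular morphism (completion of an excellent ring), hence regular.

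\emph{Verification of (a)--(c).}
\begin{itemize}
	\item[(a)] The ideal $\m R=R_+$ pulls back to the ideal of the zero section in $V$, namely $\bigoplus_{t\ge1}L^t$. This needs a check: $R_+\OO_V$ is generated in degree $1$ because $L$ is globally generated, so $H^0(L)\otimes L^{t-1}\to L^t$ is surjective. The zero section $Y$ lies over $\m$, so it embeds unchanged in $W$, and its conormal bundle is the degree-one piece $L$.
	\item[(b)] Off the zero section, $q$ is an isomorphism onto $\Spec R\smallsetminus\{\m\}$. Indeed, over a basic open set $D_+(s)$ with $s\in R_1$, both sides equal $\Spec R_s$, because $V\smallsetminus Y$ is the $\mathbb{G}_m$-bundle $\mathbf{Spec}\bigoplus_{t\in\Z}L^t$. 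This isomorphism base-changes to $W$.
	\item[(c)] $\OO_W(-Y)$ is the pullback of $\OO_V(-Y)$, and $\OO_V(-Y)$ restricts to $L$ on the zero section. Since the only positive-dimensional fibre is $Y$, relative ampleness reduces to ampleness of $L$ on $Y$, which holds.
\end{itemize}

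\emph{Main obstacle.} The delicate step is the scheme-theoretic claim in (a): the closed fibre must be exactly the reduced zero section, not a thickening. This rests on $R_+\OO_V$ being generated in degree $1$, which is the global generation of $L$. Once that holds, the rest is formal base change.
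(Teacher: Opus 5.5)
Your argument is correct, and it builds the same scheme as the paper, which proves this statement as the case $m=2$ of Proposition~\ref{prop:model-m}. Since $\Rh=R\otimes_SA$, your $W=\Tot(L^{-1})\times_{\Spec R}\Spec\Rh$ coincides with the paper's $T\times_{\A^3}\Spec A$. The two routes differ in how the properties are obtained.

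The paper writes $T=Y\times_{\PP^2}\Tot(\OO_{\PP^2}(-1))=Y\times_{\PP^2}\Bl_0\A^3$ by means of the finite projection $f$. This gives several things at once:
\begin{itemize}
\item properness is automatic (a finite map followed by a blow-up);
\item the reducedness of the closed fibre and the conormal bundle $f^*\OO_D(1)=\OO_Y(H)$ are pulled back from the blow-up via $\m_A\OO_Z=\cI_D$;
\item regularity is checked at closed points (all of which lie on $Y$, by properness) through the flat maps $\OO_{T,x'}\to\OO_{W,x}$ with closed fibre $\C$;
\item one obtains the finite morphism $g\colon W\to Z$ that Step~4 of the proof of Theorem~\ref{thm:local-m} needs.
\end{itemize}
You stay intrinsic over $\Spec R$, which is closer to the wording of the statement. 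The reduced fibre comes from global generation of $L$; this is the same input as the paper's, since the $x_i$ already generate $L$. Regularity comes from excellence of $R_\m$: the base change $W\to V$ of the regular morphism $\Spec\Rh\to\Spec R$ is regular. You also verify (c), which the paper's proof does not address.

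The one soft spot is the properness of $q$. The description as a blown-up affine cone holds only when $R$ is generated in degree one, which is not assumed here ($H$ is only globally generated). A clean replacement: $\OO_Y\otimes_\C R\to\bigoplus_{t\ge0}L^t$ is surjective because every $L^t$ is globally generated. Hence $V$ is a closed subscheme of $Y\times_\C\Spec R$, and so it is proper over $\Spec R$. This embedding also gives (c) directly, without the fibrewise criterion. Indeed, $\OO_V(-Y)\iso\pi^*L$ for the projection $\pi\colon V\to Y$, and this is the restriction of the relatively ample $\mathrm{pr}_1^*L$. Properness of $W$ over $\Spec\Rh$ is then just base change; flatness of $R\to\Rh$ plays no role in that step.
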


In the version of \cite{OAI} one also keeps the base: $Z=\Bl_0\Spec A$ with exceptional plane $D\iso\PP^2$, and a finite morphism $g\colon W\to Z$ with $g^{-1}(D)=Y$, $g|_Y=f$, $\cI_Y=g^*\cI_D$ and $\cI_D|_D\iso\OO_D(1)$.

\begin{lemma}[{\cite[Lemmas~2.4, 2.5, \S6]{Ch}}]\label{lem:reflexive}
Let $M$ be a finite $\Rh$-module.
\begin{enumerate}[label=(\alph*)]
	\item If $M$ is small Cohen--Macaulay, then $M$ is torsion-free of full support and reflexive.
	\item If $M$ is reflexive, then $M$ is locally free on the punctured spectrum, $H^0_\m(M)=H^1_\m(M)=0$, and
	\[
	H^2_\m(M)\iso H^1\bigl(U,\widetilde M|_U\bigr).
	\]
\end{enumerate}
\end{lemma}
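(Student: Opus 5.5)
We prove the two parts separately. Part (a) comes from Serre's criterion on the normal domain $\Rh$. Part (b) comes from regularity of the punctured spectrum, supplied by Proposition~\ref{prop:model}, together with the standard comparison between local cohomology and cohomology of the punctured spectrum.

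For (a), recall from Proposition~\ref{prop:model} that $\Rh$ is a three-dimensional complete normal local domain.
\begin{enumerate}[label=(\roman*),leftmargin=2em]
	\item Let $M$ be small Cohen--Macaulay. For every associated prime $\mathfrak p$ of $M$ we have $\dim\Rh/\mathfrak p\ge\depth M=3$. Hence $\mathfrak p=0$, so $M$ is torsion-free, and $\operatorname{Supp}M=\Spec\Rh$ because $M\ne0$.
	\item Localizing, $M_\mathfrak p$ is Cohen--Macaulay of dimension $\dim\Rh_\mathfrak p$ for every $\mathfrak p$, since localization of an MCM module over a Cohen--Macaulay-in-codimension-two setting is not needed here: $M$ satisfies $(S_2)$ directly from $\depth M_\mathfrak p\ge\min(2,\dim\Rh_\mathfrak p)$, which follows from $M$ being MCM.
	\item Consider the natural map $M\to M^{**}$. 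At height-one primes $\Rh_\mathfrak p$ is a DVR, so the torsion-free module $M_\mathfrak p$ is free and the map is an isomorphism there. Hence its kernel and cokernel are supported in codimension $\ge2$.
	\item The kernel vanishes because $M$ is torsion-free.
	\item The cokernel $C$ vanishes by $(S_2)$. Suppose $C\ne0$ and take $\mathfrak p$ minimal in $\operatorname{Supp}C$, so $\dim\Rh_\mathfrak p\ge2$. Then $H^0_{\mathfrak p}(C_\mathfrak p)\ne0$. On the other hand $\depth M_\mathfrak p\ge2$, so $H^1_{\mathfrak p}(M_\mathfrak p)=0$, and $\depth M^{**}_\mathfrak p\ge1$. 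The local cohomology sequence of $0\to M\to M^{**}\to C\to0$ therefore gives $H^0_{\mathfrak p}(C_\mathfrak p)=0$, a contradiction.
\end{enumerate}
So $M$ is reflexive.

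For (b), let $M$ be reflexive. By Proposition~\ref{prop:model}(b), the punctured spectrum $\Spec\Rh\smallsetminus\{\m\}$ is isomorphic to the open subscheme $U$ of the regular scheme $W$. Hence $\Rh_\mathfrak p$ is regular for every $\mathfrak p\ne\m$, of dimension $\le2$.
\begin{enumerate}[label=(\roman*),leftmargin=2em]
	\item A reflexive module over a normal domain has depth $\ge\min(2,\dim)$ at every prime. To see this, write $M\iso N^*$ using a presentation $F_1\to F_0\to N\to 0$ of $N=M^*$. Then $0\to M\to F_0^*\to F_1^*$ is exact, and $\depth\Rh\ge2$ by normality.
	\item Over a regular local ring of dimension $\le2$, such a module has depth equal to the dimension. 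By Auslander--Buchsbaum it is then free, so $M$ is locally free on $U$.
	\item The depth statement at $\m$ itself gives $H^0_\m(M)=H^1_\m(M)=0$.
	\item Finally, use the standard sequence
	\[
	0\to H^0_\m(M)\to M\to H^0(U,\widetilde M|_U)\to H^1_\m(M)\to0 .
	\]
	Because $\Spec\Rh$ is affine, this sequence comes with isomorphisms $H^i(U,\widetilde M|_U)\iso H^{i+1}_\m(M)$ for $i\ge1$. Taking $i=1$ gives $H^2_\m(M)\iso H^1(U,\widetilde M|_U)$.
\end{enumerate}

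The only non-formal input is the regularity of the punctured spectrum of the \emph{completion}. This is not automatic from smoothness of $Y$; it is exactly what Proposition~\ref{prop:model}(b) provides, via the regular model $W$. Everything else is standard commutative algebra: Serre's criterion, Auslander--Buchsbaum, and the local-cohomology sequence for the punctured spectrum.
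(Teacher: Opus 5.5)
Your proof is correct and is the standard argument. The paper itself only cites \cite{Ch} for this lemma, but where it reproves the relevant facts (Step~2 of the proof of Theorem~\ref{thm:local-m}) it uses the same ingredients: associated primes for torsion-freeness, Auslander--Buchsbaum over the regular local rings of the punctured spectrum for local freeness, and the usual comparison $H^{i+1}_\m(M)\iso H^i(U,\widetilde M|_U)$ for $i\ge1$. The one thing to tidy is the garbled justification in (a)(ii): simply cite that localizations of Cohen--Macaulay modules are Cohen--Macaulay \cite[Thm.~2.1.3]{BrHe}, so that $\depth M_{\mathfrak p}=\dim\Rh_{\mathfrak p}$ by full support.
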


By (a) and (b), $\Rh$ has a small CM module if and only if there is a nonzero reflexive $M$ with $H^1(U,\widetilde M|_U)=0$. A reflexive $M$ is extended across $Y$ by the reflexive sheaf $\cF=(p^*\widetilde M)^{\vee\vee}$ on $W$, with $\cF|_U=\widetilde M|_U$ \cite[Lemma~2.4]{Ch}; its restriction $V=\cF|_Y$ is torsion-free \cite[Lemma~2.3]{Ch}. The extension is not unique, and both local arguments use this freedom.

\begin{lemma}[{\cite[Lemma~3.1]{Ch}} for $\cF$ reflexive; cf.\ the proof of {\cite[Lemma~3.1]{OAI}} for $\cF$ torsion-free]\label{lem:elem}
Let $\cF$ be a torsion-free coherent sheaf on $W$ whose restriction $V=\cF|_Y$ is torsion-free, and let $\varphi\colon V\to\cQ$ be a surjection onto a torsion-free sheaf with kernel $\cK$. Put $i\colon Y\hookrightarrow W$ and
\[
\cF'=\ker\bigl(\cF\to i_*V\xrightarrow{\,i_*\varphi\,}i_*\cQ\bigr).
\]
Then $\cF'$ is torsion-free (reflexive if $\cF$ is), $\cF'|_U=\cF|_U$, and its restriction to $Y$ fits into
\[
0\to\cQ\otimes\OO_Y(H)\to\cF'|_Y\to\cK\to0 .
\]
\end{lemma}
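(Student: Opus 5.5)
The proof is local along $Y$. Set $\cI=\ker(\cF\to i_*V)=\cF(-Y)$, where $\cF(-Y)=\cF\otimes\OO_W(-Y)$; since $Y$ is Cartier and $\cF$ is torsion-free, $\cF\otimes\OO_W(-Y)\to\cF$ is injective. Then $\cF'$ contains $\cI$, and
\[
\cF'/\cI=\ker\bigl(V\to\cQ\bigr)=\cK,
\]
so $\cF'$ is an extension
\[
0\to\cF(-Y)\to\cF'\to i_*\cK\to0 .
\]
Torsion-freeness of $\cF'$ is immediate, since it is a subsheaf of $\cF$. Off $Y$ the map $\cF\to i_*\cQ$ vanishes, so $\cF'|_U=\cF|_U$. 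If $\cF$ is reflexive, I would check that $\cF'$ is reflexive by the depth criterion on the regular three-dimensional scheme $W$. The cokernel $\cF/\cF'\iso i_*\cQ$ is torsion-free on $Y$, so it has depth $\ge1$ at every point of $Y$ of codimension $\ge2$ in $W$. Since $\cF$ satisfies $S_2$, the depth lemma gives $\depth\cF'_x\ge\min(\depth\cF_x,\depth(i_*\cQ)_x+1)\ge2$ at such points. A torsion-free sheaf satisfying $S_2$ on a regular (normal) scheme is reflexive.

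The main step is to compute $\cF'|_Y=\cF'\otimes\OO_Y=\cF'/\cF'(-Y)$. I would compare the two filtrations
\[
\cF'(-Y)\subset\cF(-Y)\subset\cF' .
\]
The top quotient $\cF'/\cF(-Y)$ is $\cK$, as computed above. The bottom quotient $\cF(-Y)/\cF'(-Y)$ is $(\cF/\cF')\otimes\OO_W(-Y)=i_*\cQ\otimes\OO_W(-Y)$. This uses that twisting by the line bundle $\OO_W(-Y)$ is exact, so the exact sequence $0\to\cF'\to\cF\to i_*\cQ\to0$ twists to an exact sequence. By Proposition~\ref{prop:model}(a), $\OO_W(-Y)|_Y\iso\OO_Y(H)$, so the bottom quotient is $\cQ\otimes\OO_Y(H)$. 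The chain of inclusions then yields
\[
0\to\cQ\otimes\OO_Y(H)\to\cF'/\cF'(-Y)\to\cK\to0,
\]
which is the claimed sequence.

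The point needing care, and the one I expect to be the main obstacle, is justifying that the inclusions really are inclusions and that the quotients are as stated. Concretely, one needs $\cF'(-Y)\to\cF'$ to be injective and $\cF'(-Y)\subset\cF(-Y)$ to be compatible with the embedding $\cF(-Y)\subset\cF'$. Both follow from torsion-freeness and the Cartier property: locally $Y=(g)$, multiplication by $g$ is injective on $\cF$ and $\cF'$, and $g\cF'\subset g\cF\subset\cF'$. Since $\cF'\subset\cF$, there is also no hidden $\Tor_1^{\OO_W}(\cF',\OO_Y)$ term. The proof is thus local on $W$ and uses no hypothesis on $\cK$ beyond its definition.
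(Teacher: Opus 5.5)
Your proof is correct and is essentially the argument the paper gives in Step~3 of the proof of Theorem~\ref{thm:local-m}. There the paper tensors $0\to\cF'\to\cF\to i_*\cQ\to0$ with $\OO_Y$ and uses $\Tor_1^{\OO_W}(\cF,\OO_Y)=0$ and $\Tor_1^{\OO_W}(i_*\cQ,\OO_Y)\iso\cQ(H)$. Your filtration $\cF'(-Y)\subset\cF(-Y)\subset\cF'$ is the elementary unpacking of that sequence, with $\cF(-Y)/\cF'(-Y)\iso\cQ(H)$ appearing as the kernel of $\cF'|_Y\to\cF|_Y$. Your depth-lemma argument for reflexivity is also correct, and it covers the one part of the statement that the paper does not prove but takes from \cite{Ch}.
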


\begin{lemma}[Slope normalization]\label{lem:normalize}
Let $\cF$ be as in Lemma~\ref{lem:elem}.
\begin{enumerate}[label=(\alph*)]
	\item After finitely many modifications along the last Harder--Narasimhan quotient, all normalized Harder--Narasimhan slopes $\mu$ of $\cF|_Y$ lie in an interval of length at most $1$ \cite[Lemma~3.1]{OAI}.
	\item If $\cF$ is reflexive, then after a twist by a multiple of $Y$ and finitely many modifications, all normalized Harder--Narasimhan slopes of $\cF|_Y$ lie in the fixed window $[\frac32,\frac52)$, and $\cF$ remains reflexive \cite[Prop.~3.3]{Ch}.
\end{enumerate}
\end{lemma}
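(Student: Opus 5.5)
We prove the two parts by iterating the elementary modification of Lemma~\ref{lem:elem}, which leaves $\cF|_U$ unchanged, and tracking the Harder--Narasimhan slopes of the restriction. In both parts $V=\cF|_Y$ is torsion-free. Let $\cQ$ be its last (minimal slope) Harder--Narasimhan quotient, which is torsion-free and semistable, and let $\cK$ be the kernel. Lemma~\ref{lem:elem} then gives $0\to\cQ(H)\to\cF'|_Y\to\cK\to0$. Twisting by $H$ raises the normalized slope of $\cQ$ by exactly $1$, since $\mu(\cQ(H))=\mu(\cQ)+1$. The factors of $\cK$ are the remaining factors of $V$, with unchanged slopes. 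So the Harder--Narasimhan slopes of $\cF'|_Y$ are controlled by the multiset $\{\mu(\cQ_1),\dots,\mu(\cQ_{s-1}),\mu(\cQ_s)+1\}$, in the following sense: every subsheaf of $\cF'|_Y$ has slope at most $\max(\mu(\cQ_1),\mu(\cQ_s)+1)$, and every quotient has slope at least $\min(\mu(\cQ_{s-1}),\mu(\cQ_s)+1)$, because the extension of semistable pieces bounds the extreme HN slopes.

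For part (a), put $\mu_{\max}$, $\mu_{\min}$ for the extreme normalized slopes and $\ell=\mu_{\max}-\mu_{\min}$. If $\ell>1$, apply the modification along the last quotient. By the above, the new $\mu_{\max}$ does not increase, since $\mu_s+1<\mu_1$, and the new $\mu_{\min}$ does not decrease. For termination we need a discrete invariant that strictly decreases. Denominators of slopes are bounded by $\rk V$ times $d$, and the numerators $c_1\cdot H$ are integers, so slopes lie in the lattice $\frac{1}{d\cdot\rk V!}\Z$. The key point is that the total rank of the pieces with slope equal to the current $\mu_{\min}$ strictly drops unless $\mu_{\min}$ itself rises. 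Indeed, the rank of the bottom semistable part of slope $\mu_{\min}$ decreases, because part of it (namely $\cQ$) moved up by $1$. Hence the pair (rank at minimal slope, $\mu_{\min}$) improves lexicographically. Since $\mu_{\min}$ is bounded above by $\mu_{\max}$ and lives in a discrete set, the process ends with $\ell\le1$.

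The main obstacle is making this monotonicity rigorous: $\cF'|_Y$ is only an extension, and its HN filtration need not be the naive one. I would handle it with a potential function. Consider the HN polygon of $V$, or equivalently the quantity $P=\sum_j \rk(\cQ_j)\,\max(0,\mu_{\max}-1-\mu(\cQ_j))$, where $\mu_{\max}$ is held fixed because it never increases. Using the fact that the HN polygon of an extension lies below the polygon of the direct sum of the sub and the quotient, $P$ drops by at least $\min(\rk\cQ,\cdot)$ times a lattice quantum at each step. Since $P$ takes values in a discrete subset of $\R_{\ge0}$, this forces termination with all slopes in $[\mu_{\max}-1,\mu_{\max}]$.

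For part (b), first apply (a); each modification keeps $\cF$ reflexive by Lemma~\ref{lem:elem}. Next twist by $\OO_W(-Y)$. This preserves reflexivity and changes $\cF|_Y$ to $\cF|_Y\otimes L$, so it shifts every normalized slope by $+1$, and twisting by $\OO_W(Y)$ shifts by $-1$. Choose the twist so that $\mu_{\max}\in[\frac32,\frac52)$. Any slopes still below $\frac32$ lie in $[\mu_{\max}-1,\frac32)$, so they are $\ge\frac12$. Modifying along the last quotient moves these slopes up by $1$, into $[\frac32,\frac52)$, and the same potential argument, now confined to the window, terminates. A slope pushed up by $1$ from below $\frac32$ lands below $\frac52$, so the window is preserved and the final HN slopes all lie in $[\frac32,\frac52)$.
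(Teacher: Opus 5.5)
Your proof is correct, and its skeleton is the paper's. In Step~3 of the proof of Theorem~\ref{thm:local-m}, where (a) is proved, the paper also modifies along the last Harder--Narasimhan quotient and uses $0\to\cQ(H)\to\cF'|_Y\to\cK\to0$ to see that $\mu_{\max}$ never increases.

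The difference is the termination invariant. The paper tracks the average slope, which is computed from $c_1$ alone:
\[
\mu(\cF'|_Y)=\mu(\cF|_Y)+\rk\cQ/\rk\cF\ge\mu(\cF|_Y)+\tfrac1r .
\]
It stays below the initial $\mu_{\max}$, since $\mu\le\mu_{\max}$ and $\mu_{\max}$ does not increase. This needs no discreteness of slopes, and nothing about the new Harder--Narasimhan filtration beyond the bound on $\mu_{\max}$. It also bounds the number of steps by $r(\mu_{\max}-\mu)$.

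You track $\mu_{\min}$ instead, and need the lattice $\frac{1}{d\,r!}\Z$ for termination. That works, but your first paragraph already gives more than you use. Since $\cQ$ is the whole minimal-slope part, $\mu_{\min}(\cF'|_Y)\ge\min\bigl(\mu(\cQ_{s-1}),\mu(\cQ_s)+1\bigr)>\mu(\cQ_s)$, so $\mu_{\min}$ rises strictly at every step. The case ``rank at minimal slope drops while $\mu_{\min}$ stays'' never occurs; a lexicographic order would in any case have to put $\mu_{\min}$ first. Your potential function is valid: it is the polygon comparison combined with Karamata's inequality for the convex function $x\mapsto\max(0,\mu_{\max}-1-x)$, compare Lemma~\ref{lem:maj-m}. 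But it addresses an obstacle you had already removed.

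For (b), which the paper only quotes from \cite[Prop.~3.3]{Ch}, your argument is also correct. It needs only one fact: while you modify with $\mu_{\min}<\frac32$, the new $\mu_{\max}$ is at most $\max(\mu_{\max},\mu_{\min}+1)<\frac52$. So the preliminary application of (a) and the lower bound $\frac12$ are not needed, and either invariant gives termination. You should also say that twisting by $\OO_W(kY)$ leaves $\cF|_U$ unchanged, since $\OO_W(Y)$ is trivial off $Y$.
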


\section{Chen's local cohomology obstruction}\label{sec:chen}

Chen presents the following as a proposed argument, circulated for independent verification, and states its consequence for Hochster's conjecture conditionally.

\begin{claim*}[{\cite[Main claim~1.1]{Ch}}]
Let $Y$ be a smooth surface and $H$ an ample globally generated divisor with $K_Y\equiv4H$ and
\[
c(Y,H):=\tfrac{15}{8}H^2-\chi(\OO_Y)>0 .
\]
Then, for each nonzero finitely generated reflexive $\Rh$-module $M$, the local cohomology module $H^2_\m(M)$ is of finite length, with
\[
\dim_\C H^2_\m(M)\ \ge\ c(Y,H)\,\rk M .
\]
In particular $\Rh$ has no small Cohen--Macaulay module.
\end{claim*}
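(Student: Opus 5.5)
We plan to prove the Main claim by reducing it to a cohomology computation on the closed fibre $Y\subset W$, combined with the variance bookkeeping of Proposition~\ref{prop:defect}.

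\emph{Step 1: translation to $U$.} Let $M$ be a nonzero reflexive $\Rh$-module of rank $\rho$. By Lemma~\ref{lem:reflexive}(b), $H^2_\m(M)\iso H^1(U,\widetilde M|_U)$. We extend $\widetilde M|_U$ to the reflexive sheaf $\cF=(p^*\widetilde M)^{\vee\vee}$ on $W$ and then apply Lemma~\ref{lem:normalize}(b). This lets us assume that all normalized Harder--Narasimhan slopes of $V=\cF|_Y$ lie in $[\frac32,\frac52)$, while $\cF|_U$ is unchanged. Write $U$ as the union of the punctured neighbourhoods of $Y$. The formal-functions / Mayer--Vietoris description then gives
\[
H^1(U,\cF)\iso\varinjlim_{k}H^1\bigl(W,\cF(kY)\bigr).
\]
Together with $p$-ampleness of $\OO_W(-Y)$ (Proposition~\ref{prop:model}(c)), this yields an exact-sequence description through the graded pieces $V\otimes\OO_Y(-jH)$, $j\in\Z$. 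Concretely, $H^2_\m(M)$ receives contributions from $H^1(Y,V(-j))$ for $j\ge1$ and from $H^1(Y,V(j))$ for $j\ge0$, glued by the connecting maps of $0\to\cF(-(j+1)Y)\to\cF(-jY)\to V(j)\to0$. For finite length, we use that $H^1(Y,V(t))=0$ for $|t|\gg0$: Serre vanishing handles $t\gg0$, and Serre duality with Bogomolov handles $t\ll0$. We then intend to show that the gluing maps can only cancel $H^0$ against $H^1$ in a bounded range of $t$.

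\emph{Step 2: lower bound by Euler characteristics.} The goal is
\[
\dim H^2_\m(M)\ \ge\ -\sum_{t\in I}\chi\bigl(V(t)\bigr)
\]
for a window $I$ of twists. This is where $K_Y\equiv4H$ enters. The window $[\frac32,\frac52)$ is designed so that $H^0(V(t))=0$ for $t\le -1$ (slopes too negative) and $H^2(V(t))=H^0(V^\vee(K-t))^*=0$ for $t\ge 3$ (slopes again negative, since $K=4H$ in the numerical sense). For the intermediate twists $t\in\{0,1,2\}$, all three cohomology groups can appear, and we bound $h^1\ge-\chi$ directly. We then sum Lemma~\ref{lem:rr} over these twists, using $\kappa=4d$, $\alpha^2\le0$ by Hodge index, and $\delta\ge0$ by Bogomolov on the HN factors. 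The per-factor quadratic in the slopes, minimized over the window, should produce exactly $\frac{15}{8}d-\chi(\OO_Y)$ per unit rank. Here $\sigma/8$ is rewritten through $K^2=16d$ and Noether's formula.

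\emph{Main obstacle.} The hard step is Step~2's claim that the connecting maps in the colimit do not destroy the $H^1$ contributions. An $H^0(V(t))$ in the window could map onto $H^1$ of a neighbouring graded piece. We would first try to choose the normalization so that the relevant $H^0$'s vanish by semistability, i.e.\ slope below zero after the twist. Where this fails, we would use the freedom of further elementary modifications (Lemma~\ref{lem:elem}) to move the offending factor out of the window. We would also check that the Euler characteristic sum is insensitive to such modifications. Since $\dim H^2_\m(M)$ depends only on $M$, the resulting bound holds for every $M$. The final statement then follows from $c(Y,H)>0$ together with Lemma~\ref{lem:reflexive}(a): a small CM module would be reflexive with $H^2_\m(M)=0$, which contradicts the bound.
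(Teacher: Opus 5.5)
\paragraph{Step~2 fails as stated.} Your Step~1 is fine, since $U\hookrightarrow W$ is affine and so $H^1(U,\cF)=\varinjlim_kH^1(W,\cF(kY))$. The problem is Step~2. With $\lambda=K_Y\cdot H/d=4$, apply Lemma~\ref{lem:rr} to the Harder--Narasimhan factors $G_i$ of $V$:
\[
\frac{\chi(V(t))}{\rk V}=\sum_iw_i\Bigl[\frac d2\bigl(t+\mu(G_i)-2\bigr)^2-\frac\sigma8+\frac{\alpha(G_i)^2-\delta(G_i)}2\Bigr].
\]
The window $[\frac32,\frac52)$ gives $(t+\mu(G_i)-2)^2\le\frac14$ only at $t=0$. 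There it yields $\chi(V)\le\frac{d-\sigma}8\rk V=-c(Y,H)\rk V$, because $8c=\sigma-d$ under $K_Y\equiv4H$; this is Proposition~\ref{prop:chen-chi}.

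At $t=1,2$ the squares go up to $\frac94$ and $\frac{25}4$, and $\chi(V(t))$ can be positive. Take the quadrangle surface with $n=6$ ($K\sim4H$, $d=1080$, $\chi(\OO_Y)=1926$, $c=99$) and $V=\OO_Y(2H)$, of slope $2$. Then $\chi(V)=-234$, $\chi(V(1))=306$ and $\chi(V(2))=1926$, so $-\sum_{t=0}^{2}\chi(V(t))<0$. No sum over several twists can produce $c(Y,H)$. The quantity to bound from below must be the single Euler characteristic $-\chi(V)$.

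Your vanishing ranges are also off by two. $V(-1)$ and $V(-2)$ have positive slopes, so $H^0(V(t))=0$ is guaranteed only for $t\le-3$, symmetric to $H^2(V(t))=0$ for $t\ge3$. In the example, $H^0(V(-1))=H^0(\OO_Y(H))\ne0$.

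\paragraph{The missing idea: bounding the cancellation of $H^1(V)$.} In the filtration of $\cF$ by the $\cF(-jY)$, classes in $H^1(V)$ can be hit from $H^0(V(j))$ for $j<0$, and can map to $H^2(V(j))$ for $j>0$. While the slopes stay in the window, these groups need not vanish for $j=-1,-2$ and $j=1,2$. They therefore have to be bounded, not killed.

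The route recorded in the paper, from \cite{Ch}, does exactly this, and only at $t=0$. First, $\dim H^1(U,\cF|_U)\ge h^1(V)-T_+(V)-T_-(V)$ (Proposition~\ref{prop:persist}). Second, the centred window gives the tail estimates $T_-(V)=h^0(V(-1))+h^0(V(-2))\le h^0(V)$ and $T_+(V)=h^2(V(1))+h^2(V(2))\le h^2(V)$ (Lemma~\ref{lem:tails}). Together these give $\dim H^2_\m(M)\ge-\chi(V)\ge c(Y,H)\rk M$. These two inputs are precisely the parts of \cite{Ch} that the survey reports as proposed rather than verified.

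Your proposal has no substitute for either input, and the fallback you describe does not work. A modification along $\cQ$ changes $\chi(V)$ by $\chi(\cQ(H))-\chi(\cQ)$, so Euler characteristics are not invariant under it. Moving a factor out of the window also destroys the inequality $(\mu-2)^2\le\frac14$ on which the bound for $\chi(V)$ rests.

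\paragraph{A smaller point: finite length.} Finite length of $H^2_\m(M)$ does not follow from $H^1(Y,V(t))=0$ for $|t|\gg0$. That vanishing is false when $V$ is not locally free, which can happen since $V$ is only torsion-free: for $t\ll0$, $h^1(V(t))$ equals the length of $V^{\vee\vee}/V$. Finite length follows instead from $M$ being locally free on the punctured spectrum.
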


Under $K\equiv4H$ one has $8c(Y,H)=\sigma-d$, so $c>0$ is exactly the graded criterion $d<\sigma$ of Theorem~\ref{thm:acm}, now claimed for arbitrary modules. As \cite[Rem.~1.3]{Ch} notes, only the numerical equivalence is used; in Example~\ref{ex:chen} it holds linearly. The argument first uses Lemma~\ref{lem:normalize}(b) to put all normalized Harder--Narasimhan slopes of $V=\cF|_Y$ in the window $[\frac32,\frac52)$, centred at half the normalized slope $K_Y\cdot H/H^2=4$ of $\omega_Y$, and then combines the following three statements. Put $T_-(V)=\sum_{j<0}h^0(V(j))$ and $T_+(V)=\sum_{j>0}h^2(V(j))$.

\begin{proposition}[{\cite[Prop.~4.1]{Ch}}]\label{prop:chen-chi}
If $K_Y\equiv4H$ and $V$ is torsion-free of rank $r$ with all Harder--Narasimhan slopes in $[\frac32,\frac52)$, then $\chi(V)\le-c(Y,H)\,r$.
\end{proposition}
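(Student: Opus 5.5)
The plan is to apply the Riemann--Roch formula of Lemma~\ref{lem:rr} at $t=0$ to each Harder--Narasimhan factor of $V$ and add the results. Euler characteristic is additive in short exact sequences, so $\chi(V)=\sum_i\chi(\cQ_i)$, where $\cQ_1,\dots,\cQ_s$ are the Harder--Narasimhan factors of $V$ with respect to $H$, of ranks $r_i$ with $\sum r_i=r$. Each $\cQ_i$ is torsion-free and $\mu_H$-semistable. It therefore suffices to prove
\[
\chi(\cQ_i)\le\frac{d-\sigma}{8}\,r_i\qquad\text{for every }i,
\]
and then to identify $\frac{d-\sigma}{8}$ with $-c(Y,H)$.

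First I record what the hypothesis $K_Y\equiv4H$ gives numerically. We have $\kappa=K_Y\cdot H=4d$, so $\kappa/(2d)=2$. Also $K_Y^2=16d$, hence $\sigma=K_Y^2-8\chi(\OO_Y)=16d-8\chi(\OO_Y)$ and
\[
\sigma-d=15d-8\chi(\OO_Y)=8\,c(Y,H).
\]
Only intersection numbers enter, so numerical equivalence suffices.

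Next I apply Lemma~\ref{lem:rr} to $\cF=\cQ_i$ at $t=0$. Writing $\mu_i=\mu_H(\cQ_i)/d$ for the normalized slope, it gives
\[
\frac{\chi(\cQ_i)}{r_i}=\frac d2\,(\mu_i-2)^2-\frac{\sigma}{8}+\frac{\alpha(\cQ_i)^2-\delta(\cQ_i)}{2}.
\]
The last term is $\le0$ for two reasons. Bogomolov's inequality gives $\delta(\cQ_i)\ge0$, since $\cQ_i$ is semistable and torsion-free. The Hodge index theorem gives $\alpha(\cQ_i)^2\le0$, since $\alpha\cdot H=0$. The window hypothesis $\mu_i\in[\frac32,\frac52)$ gives $|\mu_i-2|\le\frac12$, so the first term is at most $d/8$. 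Together these yield $\chi(\cQ_i)/r_i\le(d-\sigma)/8=-c(Y,H)$.

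Multiplying by $r_i$ and summing over $i$ gives $\chi(V)\le-c(Y,H)\,r$. The point of the window is now visible: it is centred at $\kappa/(2d)=2$, where the quadratic term of Riemann--Roch is minimal, and its half-length $\frac12$ is exactly what yields $d/8$.

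There is no serious obstacle. The one point to check is that Lemma~\ref{lem:rr} and Bogomolov's inequality are valid for torsion-free sheaves that need not be locally free; Bogomolov holds in that generality by \cite[Thm.~3.4.1]{HL}, and the formula is plain Riemann--Roch. The other point is that the normalization in Lemma~\ref{lem:rr}, with $\beta=\nu-K_Y/2$, matches the center $2$ of the window, which the computation $\kappa/(2d)=2$ above confirms.
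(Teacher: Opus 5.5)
Your proof is correct and is essentially the paper's argument. In Remark~\ref{rem:chen-v} the paper derives Proposition~\ref{prop:chen-chi} in the same way: it applies Lemma~\ref{lem:rr} at $t=0$ to each Harder--Narasimhan factor and sums, then bounds the result using Bogomolov's inequality, the Hodge index theorem, and $|\mu(G_i)-\lambda/2|\le\frac12$ with $\lambda=4$. As the paper notes, the same argument also covers the closed window $[\frac32,\frac52]$.
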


\begin{lemma}[{\cite[Lemma~4.2, Prop.~4.3]{Ch}}]\label{lem:tails}
If $K_Y\equiv4H$ and all Harder--Narasimhan slopes of $V$ lie in $[\frac32,\frac52)$, then $T_-(V)\le h^0(V)$ and $T_+(V)\le h^2(V)$.
\end{lemma}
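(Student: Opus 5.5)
The plan is to derive both inequalities from one elementary fact: multiplication by a general pencil in $|H|$, together with the vanishing of $h^0$ three steps below the slope window. The second inequality then follows from the first by Serre duality.

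\emph{Vanishing.} All normalized Harder--Narasimhan slopes of $V$ are $<\frac52$, so for $j\le-3$ every Harder--Narasimhan factor of $V(j)$ has normalized slope $<-\frac12<0$. A $\mu_H$-semistable torsion-free sheaf of negative slope has no sections. Using the Harder--Narasimhan filtration and left exactness of $H^0$, we get $h^0(V(j))=0$ for all $j\le-3$. Hence $T_-(V)=h^0(V(-1))+h^0(V(-2))$.

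\emph{Pencil inequality.} Choose two general sections $s_0,s_1\in H^0(\OO_Y(H))$. Their common zero locus $Z$ is finite, and since $V$ is torsion-free we may take $Z$ disjoint from the finite set where $V$ is not locally free. The Koszul complex of $(s_0,s_1)$ tensored with $V(j)$ gives an exact sequence
\[
0\to V(j-2)\xrightarrow{(s_1,-s_0)}V(j-1)^{\oplus2}\xrightarrow{(s_0,s_1)}V(j).
\]
Exactness on the left and in the middle holds because $s_0,s_1$ form a regular sequence on the locally free locus and $V$ is torsion-free. Taking global sections gives, for every $j$,
\[
h^0(V(j))\ \ge\ 2\,h^0(V(j-1))-h^0(V(j-2)).
\]
With $j=-1$ and $h^0(V(-3))=0$ this yields $h^0(V(-1))\ge2h^0(V(-2))$. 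With $j=0$ it yields
\[
h^0(V)\ge2h^0(V(-1))-h^0(V(-2))\ge h^0(V(-1))+h^0(V(-2))=T_-(V).
\]
Only the upper end $\frac52$ of the window is used here.

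\emph{Duality.} For $T_+$, apply Serre duality: $h^2(V(j))=h^0(V^\vee\otimes\omega_Y(-jH))$. Since $K_Y\equiv4H$, write $\omega_Y\iso\OO_Y(4H)\otimes P$ with $P$ numerically trivial. Twisting by $P$ changes no slopes, and the pencil argument above works verbatim for $V^\vee\otimes P$. Put $V'=V^\vee\otimes P(4)$, which is reflexive, hence torsion-free. Its Harder--Narasimhan slopes are $4$ minus those of $V$, so they lie in $(\frac32,\frac52]$. Then $h^2(V(j))=h^0(V'(-j))$, and this vanishes for $j\ge3$ because every factor of $V'(-j)$ has slope $\le\frac52-3<0$. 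Running the same two Koszul inequalities on $V'$ gives $h^2(V)=h^0(V')\ge h^0(V'(-1))+h^0(V'(-2))=T_+(V)$.

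The only delicate point is exactness of the Koszul sequence for a merely torsion-free $V$. The kernel of $V(j-1)^{\oplus2}\to V(j)$ agrees with $V(j-2)$ away from a finite set. Both sheaves are torsion-free, and the map from $V(j-2)$ is a saturated inclusion on the locally free locus. I would resolve the issue by computing the kernel there and noting that global sections of the kernel embed in sections over the complement of finitely many points. In fact we only need the inequality $\dim\ker\bigl(H^0(V(j-1))^{2}\to H^0(V(j))\bigr)\le h^0(V(j-2))$. This holds because a pair $(a,b)$ with $s_0a+s_1b=0$ is divisible on the locally free locus, where $(s_0,s_1)$ is regular. The divisibility gives a section of $V(j-2)$ off finitely many points, and that section extends because $V$ is torsion-free on a normal surface and sections of $V^{\vee\vee}$ extend. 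If this extension causes trouble, I would fall back to replacing $V$ by $V^{\vee\vee}$ in the vanishing step, since $h^0(V(j))\le h^0(V^{\vee\vee}(j))$ and both sheaves have the same slopes.
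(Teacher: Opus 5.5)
Your argument is correct. It is not a variant of anything in the text: the survey quotes this lemma from \cite[Lemma~4.2, Prop.~4.3]{Ch} without proof. Section~\ref{sec:status} also says that, of Chen's steps, only Proposition~\ref{prop:chen-chi} is checked independently (Remark~\ref{rem:chen-v}). Your proof therefore supplies an independent verification by elementary means: vanishing of $h^0$ from semistability, the base-point-free pencil inequality $h^0(V(j))\ge2h^0(V(j-1))-h^0(V(j-2))$, and Serre duality.

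It also shows how little of the hypothesis is used. The first inequality needs only $\mu_{\max}(V)<3$. For the second you can work directly with $V'=V^\vee\otimes\omega_Y$, without splitting off $P$. Its Harder--Narasimhan slopes are $\lambda-\mu_i$ with $\lambda=K_Y\cdot H/H^2$, so the second inequality needs only $\mu_{\min}(V)>\lambda-3$. This step rests on $\mu_{\max}(V^\vee)=-\mu_{\min}(V)$, which deserves a line: dualize the Harder--Narasimhan filtration of $V^{\vee\vee}$. The relation $K_Y\equiv4H$ serves only to place the window $[\frac32,\frac52)$ inside $(\lambda-3,3)=(1,3)$.

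Delete your last paragraph; the difficulty it addresses does not arise. Once $Z$ avoids the finitely many points where $V$ is not locally free, the tensored Koszul complex is exact at every point:
\begin{itemize}
\item off $Z$, one of $s_0,s_1$ is a unit, so the Koszul complex is split exact and stays exact after tensoring with $V$;
\item at points of $Z$, the stalk of $V$ is free and $(s_0,s_1)$ is a regular sequence.
\end{itemize}
So the kernel of $V(j-1)^{\oplus2}\to V(j)$ is exactly $V(j-2)$, and no sections need to be extended. This pointwise argument should also replace your justification ``regular sequence on the locally free locus and $V$ torsion-free''.

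As written, that last paragraph is also inaccurate. A section of $V(j-2)$ defined off finitely many points extends only to a section of $V^{\vee\vee}(j-2)$. The fallback to $V^{\vee\vee}$ would bound $T_-(V)$ by $h^0(V^{\vee\vee})$, which may exceed $h^0(V)$.
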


\begin{proposition}[{\cite[Prop.~5.1, Cor.~5.4]{Ch}}]\label{prop:persist}
For a reflexive $\cF$ on $W$ with $V=\cF|_Y$ and $T_\pm(V)$ finite,
\[
\dim_\C H^1(U,\cF|_U)\ \ge\ h^1(V)-T_+(V)-T_-(V).
\]
\end{proposition}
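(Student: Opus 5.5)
The plan is to bound $H^1(U,\cF|_U)$ from below using the long exact sequence of local cohomology with supports in $Y$,
\[
H^1_Y(W,\cF)\longrightarrow H^1(W,\cF)\longrightarrow H^1(U,\cF|_U)\longrightarrow H^2_Y(W,\cF)\longrightarrow\cdots .
\]
From it, $\dim H^1(U,\cF|_U)\ge\dim H^1(W,\cF)/\operatorname{im}H^1_Y(W,\cF)$. Both $H^1(W,\cF)$ and $H^1_Y(W,\cF)$ will be compared with $H^1(V)$, and the two defects will be controlled by $T_+$ and $T_-$ respectively. The basic input is the $Y$-adic filtration. Since $Y$ is an effective Cartier divisor with $\OO_W(-Y)|_Y\iso\OO_Y(H)$ (Proposition~\ref{prop:model}(a)), for every $k\in\Z$ the quotient satisfies
\[
\cF(-kY)/\cF(-(k+1)Y)\iso i_*\bigl(V(k)\bigr),
\]
because twisting by a line bundle commutes with restriction.

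\textbf{Step 1: the positive side.} Restricting along $0\to\cF(-Y)\to\cF\to i_*V\to0$ gives $H^1(W,\cF)\to H^1(V)\to H^2(W,\cF(-Y))$. The image of $H^1(W,\cF)$ in $H^1(V)$ therefore has dimension at least $h^1(V)-\dim H^2(W,\cF(-Y))$. Since $p$ is proper over the complete local ring $\Rh$, the theorem on formal functions gives
\[
H^2(W,\cF(-Y))=\varprojlim_k H^2\bigl(\cF(-Y)/\cF(-kY)\bigr).
\]
The successive pieces of the filtration are the $V(j)$ with $j\ge1$, and $H^3$ vanishes on these sheaves supported on $Y$, so the $H^2$ terms are right exact along the filtration. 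Hence $\dim H^2(W,\cF(-Y))\le\sum_{j\ge1}h^2(V(j))=T_+(V)$, and the image of $H^1(W,\cF)$ in $H^1(V)$ has dimension $\ge h^1(V)-T_+(V)$.

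\textbf{Step 2: the negative side.} Because $\cF$ is reflexive on the regular scheme $W$ and $Y$ is Cartier, $H^0_Y(W,\cF)=0$. Every section of $\cF$ over $U$ extends to a section of $\cF(kY)$ for some $k$, since it is a rational section with poles only along $Y$. Therefore
\[
H^1_Y(W,\cF)=\operatorname{coker}\bigl(H^0(W,\cF)\to H^0(U,\cF)\bigr)=\bigcup_k H^0(W,\cF(kY))/H^0(W,\cF).
\]
Filtering $\cF(kY)/\cF$ by $\cF(jY)/\cF((j-1)Y)\iso i_*V(-j)$, $1\le j\le k$, gives
\[
\dim H^0(W,\cF(kY))/H^0(W,\cF)\le\sum_{j=1}^k h^0(V(-j))\le T_-(V)
\]
for every $k$. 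Hence $\dim H^1_Y(W,\cF)\le T_-(V)$, which is finite by hypothesis.

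\textbf{Conclusion.} Compose $H^1(W,\cF)\to H^1(U,\cF|_U)$ with nothing further. Instead, observe that the quotient $H^1(W,\cF)/\operatorname{im}H^1_Y$ surjects onto (image of $H^1(W,\cF)$ in $H^1(V)$)$/$(image of $H^1_Y$ in $H^1(V)$). Combining the two steps, its dimension is at least $h^1(V)-T_+(V)-T_-(V)$, which bounds $\dim H^1(U,\cF|_U)$ from below.

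The step I expect to be delicate is Step 1. There I must justify passing from the inverse limit to the dimension bound, which needs care with finiteness and Mittag-Leffler when some $H^2(V(j))$ are nonzero. I must also check that the identification $\cF(-kY)/\cF(-(k+1)Y)\iso i_*V(k)$ holds for a reflexive, not necessarily locally free, $\cF$. That identification follows since $\OO_W(-Y)$ is invertible, so $\cF(-kY)=\cF\otimes\OO_W(-kY)$ is reflexive and $\cF\otimes\OO_Y$ twists compatibly. If the limit argument is awkward, I would instead work directly with the truncations $\cF/\cF(-kY)$ for $k\gg0$, using $p$-ampleness of $\OO_W(-Y)$ to kill $H^2(W,\cF(-kY))$ for large $k$.
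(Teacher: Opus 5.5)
Your overall strategy works, and Step~1 is correct as written. The transition maps of the inverse system are surjective because $H^3(V(k))=0$, and their dimensions are bounded by $T_+(V)$, so the system stabilizes and there is no Mittag-Leffler issue. Your alternative via $p$-ampleness, killing $H^2(W,\cF(-kY))$ for $k\gg0$, also works.

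The gap is in Step~2. The identification $H^1_Y(W,\cF)=\operatorname{coker}\bigl(H^0(W,\cF)\to H^0(U,\cF)\bigr)$ does not follow from what precedes it, and it is false in general. The long exact sequence only gives
\[
0\to\operatorname{coker}\bigl(H^0(W,\cF)\to H^0(U,\cF)\bigr)\to H^1_Y(W,\cF)\to\ker\bigl(H^1(W,\cF)\to H^1(U,\cF|_U)\bigr)\to0 .
\]
The right-hand term, which is the image of $H^1_Y$ in $H^1(W,\cF)$, is exactly what your Conclusion must bound. If your identification held, this image would vanish, and you would get $h^1(V)-T_+(V)$ with no $T_-$ term at all. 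So Step~2 as written bounds the part of $H^1_Y$ that plays no role, and says nothing about the part that does.

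The kernel really can be nonzero. Take $\cF=\ker\bigl(\OO_W(Y)^{\oplus3}\to i_*\OO_Y\bigr)$, where the map is restriction followed by $(s_0,s_1,s_2)\colon\OO_Y(-H)^{\oplus3}\to\OO_Y$. The induced map $H^0(W,\OO_W(Y)^{\oplus3})\to H^0(\OO_Y)$ factors through $H^0(\OO_Y(-H))^{\oplus3}=0$. Hence $1\in H^0(\OO_Y)$ gives a nonzero class in $H^1(W,\cF)$. This class dies in $H^1(W,\OO_W(Y)^{\oplus3})$, and therefore in $H^1(U,\cF|_U)$.

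The repair is short. Since $Y$ is Cartier, $j\colon U\hookrightarrow W$ is affine and $j_*j^*\cF=\varinjlim_k\cF(kY)$. The cohomology sequence of $0\to\cF\to j_*j^*\cF\to\varinjlim_k\cF(kY)/\cF\to0$ therefore gives $H^1_Y(W,\cF)\iso\varinjlim_kH^0(W,\cF(kY)/\cF)$, with injective transition maps.

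Your filtration of $\cF(kY)/\cF$ by the $i_*V(-j)$ already bounds $\dim H^0(W,\cF(kY)/\cF)$ itself by $\sum_{j=1}^kh^0(V(-j))\le T_-(V)$. That is what it actually proves, before you pass to the subspace $H^0(W,\cF(kY))/H^0(W,\cF)$. So $\dim H^1_Y(W,\cF)\le T_-(V)$ does hold, and your Conclusion then goes through unchanged.

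Equivalently, you can avoid $H^1_Y$. Use $H^1(U,\cF|_U)=\varinjlim_kH^1(W,\cF(kY))$, and note that $\ker\bigl(H^1(W,\cF)\to H^1(W,\cF(kY))\bigr)$ is the image of $H^0(W,\cF(kY)/\cF)$.

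The paper quotes this proposition from [Ch] without proof, so there is no argument in the text to compare routes with. The corrected version of yours uses only that $\cF$ is torsion-free.
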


With Lemma~\ref{lem:reflexive}(b) they give $\dim H^2_\m(M)\ge h^1(V)-h^2(V)-h^0(V)=-\chi(V)\ge c(Y,H)\,\rk M$.

\begin{example}[{\cite[Consequence~1.2, \S7]{Ch}}]\label{ex:chen}
Let $X_6\subset\PP^5$ be the complete intersection
\[
u_3^6-u_0^6+u_1^6=u_4^6-u_0^6+u_2^6=u_5^6-u_1^6+u_2^6=0,
\]
the exponent-$6$ Kummer cover of the complete quadrangle. Its blow-up $Y$ at the $144$ singular points carries $H=3A-E$, ample and globally generated, with $K_Y\sim4H$, $H^2=1080$, $\chi(\OO_Y)=1926$, so $c(Y,H)=99$. This is the ring of Corollary~\ref{cor:qg} with $n=6$, which \cite{Ch} takes from \cite[Rem.~4.2, Cor.~4.3]{An2}. Subject to the Main claim, $\Rh$ --- of depth $2$, quasi-Gorenstein, with an isolated singularity --- has no small CM module.
\end{example}

\section{OpenAI's complete local domain}\label{sec:oai}

The manuscript \cite{OAI}, dated 23 September 2026 and released on 6 October as family~195 of OpenAI's catalogue \cite{OAIrepo}, claims a complete proof, under the numerical hypothesis $\sigma(Y)>2H^2$, stronger than the graded one $\sigma(Y)>H^2$. According to the release notes \cite{OAIrepo}, it is the output of an OpenAI model and has no human author.

\begin{theorem}[{\cite[Thm.~1.1]{OAI}}]\label{thm:oai1}
There is a three-dimensional complete Noetherian normal local domain $R\supset\C$ with residue field $\C$ that has no nonzero finitely generated module of depth $3$.
\end{theorem}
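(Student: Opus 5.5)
The plan is to derive Theorem~\ref{thm:oai1} from a numerical criterion: if $Y$ is a smooth surface and $H$ is ample and globally generated with $\sigma(Y)>2H^2$, then $\Rh(Y,H)$ has no small CM module. By Proposition~\ref{prop:model}, $\Rh$ is a three-dimensional complete normal local domain with residue field $\C$, so only the criterion and one example remain. For the example we take the complete-quadrangle pair $(Y_7,3A_7-E_7)$. Table~\ref{tab:families} gives $H^2=5\cdot7^3$ and $\sigma=\frac{49-10}{3}\cdot7^3=13\cdot7^3>10\cdot7^3=2H^2$. The argument of \cite[\S7.2]{An1} gives ampleness of $3A_n-E_n$; global generation comes from Lemma~\ref{lem:rees} with $\nu=2$, since the quadrangle's singular ideal is cut out by conics.

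For the criterion, argue by contradiction. Suppose $M$ is small CM. By Lemma~\ref{lem:reflexive}, $M$ is reflexive with $H^1(U,\widetilde M|_U)=0$. Extend it to $\cF$ on $W$, and use Lemma~\ref{lem:normalize}(a) to arrange that all normalized HN slopes of $V=\cF|_Y$ lie in an interval of length at most $1$.

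Next push forward along the finite map $g\colon W\to Z$ to get $g_*\cF$ on $Z=\Bl_0\Spec A$. Off $D$ this sheaf is a vector bundle on the punctured spectrum of the regular ring $A$, with $H^1=0$. By Horrocks' criterion it is then free, so $g_*\cF$ is trivial off $D$. Its restriction to $D\iso\PP^2$ is $f_*V$. The triviality off $D$ should force strong control on $f_*V$: write $g_*\cF\iso\OO_Z^N$ twisted by an elementary modification along $D$. Then $f_*V$ acquires a filtration whose pieces are split, with splitting degrees in a window of width about $1$ (at most two adjacent values, $a$ and $a+1$), rather than being split outright.

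Now apply the variance identity of Proposition~\ref{prop:defect}, but run it for this filtered $f_*V$ instead of a split sheaf. Lemma~\ref{lem:rr} and Riemann--Roch on $\PP^2$ express $\sum_t\chi$-type quantities. The defect $\frac{d-\sigma}{4}$ is then replaced by $\frac{2d-\sigma}{4}$ minus nonnegative terms: Bogomolov for the semistable HN factors, Hodge index for $\alpha^2\le0$, and Lemma~\ref{lem:maj} for the majorization. The extra $d/4$ comes from the fact that the $a_j$ are not constant but may spread over an interval of length $1$, which contributes a variance of at most $\frac14$ times $d$. This yields $\sigma\le2d$, a contradiction.

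The main obstacle is the middle step: turning ``trivial off the exceptional plane'' into a precise structural statement about $f_*V$, with degree spread at most $1$, that is compatible with the HN filtration of $V$. To get there I would first reduce, via the modifications of Lemma~\ref{lem:elem}, to the case where $\cF|_Y$ has slopes in a unit window. I would then compare $g_*\cF$ with $\OO_Z^N$ through the maps $\OO_Z(-kD)$, using the cohomology vanishing on $U$ to bound $H^1(D,f_*V(t))$ for the relevant $t$.
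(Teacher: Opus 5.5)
There is a genuine gap in your middle step. Triviality of $g_*\cF$ off $D$ does not give $f_*V$ a filtration with split pieces of degrees in $\{a,a+1\}$. Nothing of that kind holds for sheaves trivial off $D$.

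Here is a counterexample. Let $i\colon D\hookrightarrow Z$ be the inclusion, take the Euler quotient $\OO_D^3\to T_{\PP^2}(-1)$, and put $\cL'=\ker\bigl(\OO_Z^3\to i_*T_{\PP^2}(-1)\bigr)$.
\begin{itemize}
\item $\cL'$ is torsion-free and trivial off $D$.
\item The Tor sequence gives $0\to T_{\PP^2}\to\cL'|_D\to\OO_D(-1)\to0$. This splits, since $H^1(T_{\PP^2}(1))=0$.
\item Hence $v(\cL'|_D)=\frac89$. Any sheaf filtered by $\OO(a)^p$ and $\OO(a+1)^q$ has $v=pq/(p+q)^2\le\frac14$.
\end{itemize}
Your argument uses nothing about $g_*\cF$ beyond its being an (iterated) elementary modification of $\OO_Z^N$, so it cannot produce the structure you claim.

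Your accounting is also inconsistent. If $f_*V$ did have such a filtration, the majorization of Lemma~\ref{lem:maj} would extend to it, since subsheaf degrees are bounded piece by piece. The identity of Proposition~\ref{prop:synthesis} would then give $0\le v(f_*V)-v_b\le\frac14-\frac\sigma{4d}$, i.e.\ the graded bound $\sigma\le d$. That is the sharp local question left open in item~\ref{oq:sharp} of Section~\ref{sec:questions-open}. In that scenario the spread of the $a_j$ would play no role at all. Your plan to bound $H^1(D,f_*V(t))$ using vanishing on $U$ is essentially Chen's proposed persistence route, and it is not needed here.

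The missing idea is that locally one only has a numerical lower bound: Lemma~\ref{lem:variance}, proved as Lemma~\ref{lem:variance-m}. It says $v(\cL'|_D)\ge0$ for every torsion-free $\cL'$ trivial off $D$. The proof runs as follows.
\begin{itemize}
\item After a twist, sandwich $\cI^c\OO^N\subseteq\cL'\subseteq\OO^N$.
\item Filter $\OO^N/\cL'$ by powers of $\cI$. The graded pieces $\cQ_j$ are quotients of $\OO_D(j)^N$, hence $\deg\cQ_j\ge j\,\rk\cQ_j$.
\item Compute the class of $\cL'|_D$ in $K(\PP^2)$ by derived restriction.
\end{itemize}
The input that $g_*\cF$ is trivial off $D$ is just $M\iso A^N$, by Auslander--Buchsbaum; your Horrocks argument also gives this.

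The extra $\frac14$ then comes from the other side of the identity, not from the $a_j$. The unit window on the Harder--Narasimhan slopes, which you already arranged, puts the $b_i$ in an interval of length $\le1$, so $v_b\le\frac14$. Together with $v(f_*G_i)\le\frac14-\frac\sigma{4d}$ for the semistable factors, this gives $0\le v(f_*V)\le\frac12-\frac\sigma{4d}$, i.e.\ $\sigma\le2d$.

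Your reduction to the numerical criterion and your numerics for $(Y_7,3A_7-E_7)$ are fine.
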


\begin{theorem}[{\cite[Thm.~1.2]{OAI}}]\label{thm:oai2}
Let $Y$ be a smooth projective surface and $H$ an ample globally generated divisor. If $\Rh(Y,H)$ has a small Cohen--Macaulay module, then
\[
K_Y^2-2c_2(Y)\le6H^2,\qquad\text{equivalently}\qquad \sigma(Y)\le2H^2 .
\]
\end{theorem}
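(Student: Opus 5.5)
The plan is to turn a small CM module into a reflexive sheaf on the model $W$ of Proposition~\ref{prop:model}, push it down to $Z=\Bl_0\Spec A$ where it is free off the exceptional plane, and compare two computations of the same Euler characteristics: Riemann--Roch on $Y$ and the structure of the push-forward near $D$. This should reproduce the defect identity of Proposition~\ref{prop:defect}, with the splitting of $\pi_*\cE$ replaced by a weaker statement; the loss is what costs the factor $2$.

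\emph{Step 1 (reduction).} Let $M$ be a small CM $\Rh$-module. By Lemma~\ref{lem:reflexive}, $M$ is reflexive, locally free on the punctured spectrum, and satisfies $H^1(U,\widetilde M|_U)=0$. Since $\Rh$ is finite over the regular ring $A$ and $\depth M=3$, Auslander--Buchsbaum shows that $M$ is free over $A$, say $M\iso A^N$ with $N=d\cdot\rk M$.

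Extend $\widetilde M|_U$ to the reflexive sheaf $\cF=(p^*\widetilde M)^{\vee\vee}$ on $W$. Using Lemma~\ref{lem:normalize}(a), modify $\cF$ along $Y$ so that all normalized Harder--Narasimhan slopes of $V=\cF|_Y$ lie in an interval of length at most $1$. By Lemma~\ref{lem:elem} these modifications do not change $\cF|_U$.

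Put $\cG=g_*\cF$ on $Z$. Then $\cG|_D=f_*V$, because $\cI_Y=g^*\cI_D$ and $g$ is finite. Moreover $\cG$ is torsion-free, agrees with $\OO^N$ off $D$, and its blow-down to $\Spec A$ is the free module $A^N$.

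\emph{Step 2 (the local constraint).} Filter by the powers of $D$ using
\[
0\to\cG(-(k+1)D)\to\cG(-kD)\to(f_*V)(k)\to0,
\]
where $(f_*V)(k)=\cG|_D\otimes\OO_D(k)$ since $\cI_D|_D\iso\OO_D(1)$. Sum these sequences over $k\ge0$ and compare with the model case $\OO_Z^N$, whose graded pieces are $\OO_D(k)^N$.

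Freeness of $p_*\cG$ over $A$, together with the finite length of $R^ip_*$ (controlled by $H^1(U,\cG|_U)=0$ via formal functions), should force the following. The second moment $\frac1N\sum_j(a_j+c)^2$, which for an aCM bundle is the quantity appearing in Lemma~\ref{lem:maj}, is replaced by a bound read off from $\sum_k\chi((f_*V)(k))$. Concretely, I aim to show that, after the normalizing twist, the ``mean'' $\bar a$ is fixed and the variance term $d(v_a-v_b)$ in the defect identity is replaced by a quantity bounded below by $-d\,v_b$. That is: the identity of Proposition~\ref{prop:defect} survives, but only with the variance term $d\,v_a\ge0$ kept and $v_b$ entering with the wrong sign.

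\emph{Step 3 (numerics).} Apply Lemma~\ref{lem:rr} to each Harder--Narasimhan factor $\cQ_i$ of $V$. Bogomolov's inequality and the Hodge index theorem make each $\delta(\cQ_i)-\alpha(\cQ_i)^2$ nonnegative, so the identity yields
\[
\frac{d-\sigma}{4}\ \ge\ -d\,v_b .
\]
Because all slopes lie in an interval of length $\le1$, we have $v_b\le\frac14$ (Popoviciu's bound). Hence $d-\sigma\ge-d$, that is $\sigma\le 2d$, equivalently $K_Y^2-2c_2(Y)\le6H^2$.

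The main obstacle is Step 2: extracting a clean quadratic-moment inequality from the mere fact that $\cG$ is trivial off $D$ with free push-forward. This replaces the Horrocks splitting used in the graded case. I would first try to compute $\sum_{k\ge0}\bigl(h^0((f_*V)(k))-N\binom{k+2}{2}\bigr)$ from both sides. The aim is that the resulting linear and quadratic terms in the $b_i$ reproduce exactly the variance expression, with all error terms of finite length and of the correct sign.
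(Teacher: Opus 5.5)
Your architecture matches the paper's: $M\iso A^N$ by Auslander--Buchsbaum, a reflexive extension modified so that the Harder--Narasimhan slopes of $V$ lie in a window of width $1$, push-forward to $Z$, Riemann--Roch with Bogomolov and Hodge index on the factors, and $v_b\le\frac14$. Your Step~3 arithmetic is correct. The gap is Step~2, which carries all of the local content and which you leave as a hope.

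Since $f_*V$ need not split, $v_a$ is not defined. The right quantity is $v(f_*V)=2\ch_2(f_*V)/N-\bigl(c_1(f_*V)/N\bigr)^2$. For it, Riemann--Roch and additivity of $\ch$ along the filtration give, with no splitting, $\frac{d-\sigma}{4}=d\bigl(v(f_*V)-v_b\bigr)+\sum_iw_i\bigl(\delta(\cQ_i)-\alpha(\cQ_i)^2\bigr)$ (Proposition~\ref{prop:synthesis}). So Step~2 must deliver exactly $v(f_*V)\ge0$, and the tools you invoke do not give it:
\begin{itemize}
\item After the modifications, $\Gamma(Z,\cG)=\Gamma(W,\cF)$ is only a finite-colength submodule of $M$. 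It can be proper, and then it has depth $1$ and is not free over $A$.
\item The vanishing of $H^1(U,\widetilde M|_U)$ is automatic once $M\iso A^N$ and carries no further information.
\item Comparing $\sum_kh^0\bigl((f_*V)(k)\bigr)$ with $N\binom{k+2}{2}$ would require controlling subleading terms of a Hilbert function, with error terms whose sign you have not identified.
\end{itemize}

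The missing idea (Lemma~\ref{lem:variance}) is elementary and uses only that $\cG$ is torsion-free and trivial off $D$.
\begin{itemize}
\item \emph{Sandwich.} With $j\colon Z\smallsetminus D\hookrightarrow Z$, both $\cG$ and $\OO_Z^N$ lie in $j_*j^*\OO_Z^N=\bigcup_q\cI_D^{-q}\OO_Z^N$. By Noetherianity, after replacing $\cG$ by $\cI_D^q\cG$ (which twists $\cG|_D$ and leaves $v$ unchanged), one has $\cI_D^c\OO_Z^N\subseteq\cG\subseteq\OO_Z^N$.
\item \emph{Graded pieces.} Let $\cK=\OO_Z^N/\cG$ and $\cK_j=\cI_D^j\cK/\cI_D^{j+1}\cK$ for $0\le j<c$. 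Each $\cK_j$ is a quotient of $\OO_D(j)^N$, so $n_j=\rk\cK_j\le N$. Since $\cK_j(-j)$ is globally generated, $\deg\cK_j\ge jn_j$.
\item \emph{Derived restriction.} Because $0\to\cI_D\to\OO_Z\to\OO_D\to0$ is a locally free resolution, derived restriction to $D$ is additive, $\Tor_1(\cG,\OO_D)=0$ and $\Tor_1(\cK_j,\OO_D)\iso\cK_j(1)$. Hence $[\cG|_D]=[\OO_D^N]+\sum_j\bigl([\cK_j(1)]-[\cK_j]\bigr)$ in $K(\PP^2)$.
\item \emph{Chern classes.} This gives $c_1(\cG|_D)=\sum_jn_j$ and $2\ch_2(\cG|_D)=\sum_j(2\deg\cK_j+n_j)\ge\sum_j(2j+1)n_j$.
\item \emph{Conclusion.} Therefore $N^2v(\cG|_D)\ge N\sum_j(2j+1)n_j-\bigl(\sum_jn_j\bigr)^2=\sum_jn_j(N-n_j)+2\sum_{j>\ell}n_j(N-n_\ell)\ge0$.
\end{itemize}
With this in place of your Step~2, and $v(f_*V)$ in place of $v_a$, your Step~3 goes through as written.
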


\begin{corollary}[{\cite[Cor.~7.1]{OAI}}]\label{cor:oai-affine}
For the example of Proposition~\ref{prop:oai-example} with $n=7$, the uncompleted local ring $R(Y,H)_{\m}$ --- a three-dimensional local domain essentially of finite type over $\C$ --- also has no small CM module.
\end{corollary}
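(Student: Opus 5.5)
The plan is to reduce to the complete case through faithful flatness of completion, exactly as in Lemma~\ref{lem:ladder}(b), and then to invoke Theorem~\ref{thm:oai2} for the pair in question. Put $R=R(Y,H)$ for the pair of Proposition~\ref{prop:oai-example} with $n=7$, that is, the complete-quadrangle pair $(Y_7,3A_7-E_7)$, and $\m=R_+$.

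\textbf{Step 1: pass to the completion.} Suppose $N\neq0$ is a finitely generated $R_\m$-module with $\depth N=\dim R_\m=3$. The $\m R_\m$-adic completion of $R_\m$ coincides with the $\m$-adic completion $\Rh$ of $R$, and $R_\m\to\Rh$ is a faithfully flat local homomorphism with $\m\Rh$ the maximal ideal of $\Rh$. Set $\widehat N=N\otimes_{R_\m}\Rh$. Then:
\begin{itemize}[leftmargin=2em]
	\item $\widehat N$ is finitely generated over $\Rh$, and $\widehat N\neq0$ by faithful flatness.
	\item A maximal $N$-regular sequence $x_1,x_2,x_3\in\m R_\m$ stays $\widehat N$-regular by flatness, and $\widehat N/(x_1,x_2,x_3)\widehat N\neq0$ by faithful flatness. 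Alternatively, $\depth$ is preserved because $\operatorname{Ext}^i_{R_\m}(\C,N)\otimes\Rh\iso\operatorname{Ext}^i_{\Rh}(\C,\widehat N)$.
	\item $\dim\Rh=\dim R_\m=3$.
\end{itemize}
Hence $\widehat N$ would be a small Cohen--Macaulay $\Rh$-module.

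\textbf{Step 2: check the numerical hypothesis of Theorem~\ref{thm:oai2}.} By Proposition~\ref{prop:kummer}(d) with $k=6$ and $T=4$, we get $H^2=(9-4)\,7^3=5\cdot7^3=1715$. By Proposition~\ref{prop:kummer}(e) with $Q=1$ and $R=-10$, we get
\[
\sigma(Y_7)=\tfrac{7^3}{3}(49-10)=13\cdot7^3=4459 .
\]
Since $2H^2=10\cdot7^3<13\cdot7^3=\sigma$, we have $\sigma>2H^2$. The divisor $H$ is ample and globally generated, since this is how the example is set up in Proposition~\ref{prop:oai-example}, following \cite[\S7.2]{An1}. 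Theorem~\ref{thm:oai2} then says that $\Rh$ has no small Cohen--Macaulay module. This contradicts Step~1, so $R_\m$ has none either.

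\textbf{Remaining points and the main obstacle.} That $R_\m$ is a three-dimensional local domain essentially of finite type over $\C$ follows from $R$ being a finitely generated graded normal domain of dimension $3$ \cite[Lemma~1.2]{An2}. The only real content is the deep input Theorem~\ref{thm:oai2}, which we assume. The one point needing care is the identification of the completion of $R_\m$ with $\Rh$; this is standard, since $R/\m^k\iso R_\m/\m^kR_\m$ for every $k$.
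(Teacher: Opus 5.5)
Your proposal is correct and follows the paper's own route. The paper likewise uses faithful flatness of $R_\m\to\Rh$ to turn a small CM module over $R_\m$ into one over $\Rh$ (the remark after the proof of Theorem~\ref{thm:local-m}), and then excludes the latter by Theorem~\ref{thm:oai2} with the same numerics $H^2=1715$ and $K^2-2c_2=3\sigma=13377>10290=6H^2$.
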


The proof uses the finite morphisms $f\colon Y\to\PP^2$ and $g\colon W\to Z$ of Section~\ref{sec:local}. Let $M$ be a small CM module, choose an extension of $\widetilde M|_U$ normalized by Lemma~\ref{lem:normalize}(a), and let $P$ be its restriction to $Y$. Since $M$ is free over $A$ by the Auslander--Buchsbaum formula, $g_*$ of the extension is trivial off the exceptional plane $D$, and its restriction to $D$ is $f_*P$. For a coherent sheaf $F$ of rank $N>0$ on $\PP^2$ set
\[
v(F)=\frac{2\ch_2(F)}{N}-\Bigl(\frac{c_1(F)}{N}\Bigr)^2 ;
\]
for $F=\bigoplus\OO(a_j)$ this is the variance of $(a_j)$, and $v$ is invariant under twists.

\begin{lemma}[{\cite[Lemma~4.1]{OAI}}]\label{lem:variance}
Let $Z$ be an integral Noetherian scheme containing an effective Cartier divisor $D\iso\PP^2$ with $\cI_D|_D\iso\OO_D(1)$. If $\cL'$ is a torsion-free coherent sheaf on $Z$ with $\cL'|_{Z\smallsetminus D}\iso\OO^{N}$, then $v(\cL'|_D)\ge0$.
\end{lemma}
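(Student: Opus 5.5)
The plan is to compare $\cL'$ with the trivial sheaf through the $\cI_D$-adic filtration, and to compute $\ch(\cL'|_D)$ from the associated graded pieces. The computation then reduces to an Abel summation together with the elementary inequality that a variance is nonnegative.

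\emph{Step 1 (sandwich and filtration).} Fix an isomorphism $\cL'|_{Z\smallsetminus D}\iso\OO^N$. Since $\cL'$ is torsion-free and coherent, and since sections of $\OO^N$ over $Z\smallsetminus D$ acquire at most bounded poles along $D$, there is $k\ge0$ with
\[
\cI_D^{k}\OO_Z^N\subset\cL'\subset\cI_D^{-k}\OO_Z^N .
\]
For $j\in\Z$ put $\cL'_j=\cL'\cap\cI_D^{j}\OO_Z^N$. This is a finite decreasing filtration with $\cL'_j=\cL'$ for $j\le-k$ and $\cL'_j=\cI_D^j\OO^N$ for $j\ge k$. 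Since $\cI_D^j/\cI_D^{j+1}\iso\OO_D(j)$, the graded pieces embed as
\[
G_j:=\cL'_j/\cL'_{j+1}\subset\OO_D(j)^N .
\]
Multiplication by a local equation of $D$ gives injections $G_{j-1}(1)\hookrightarrow G_j$. Untwisting, $F_j:=G_j(-j)$ is an increasing chain of subsheaves of $\OO_D^N$, equal to $0$ for $j\ll0$ and to $\OO_D^N$ for $j\gg0$.

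\emph{Step 2 (the Chern character of the restriction).} Apply the deformation to the normal cone, i.e.\ the Rees construction $\bigoplus_j\cL'_j$ over $\bigoplus_j\cI_D^j$. This gives a flat degeneration of $\cL'|_D$ to the restriction, to the zero section, of the graded module $\bigoplus_jG_j$ over $\mathrm{Sym}\,\OO_D(1)$. That restriction is $\bigoplus_j G_j/G_{j-1}(1)=\bigoplus_j K_j(j)$, where $K_j=F_j/F_{j-1}$. Since $\ch$ is constant in flat families,
\[
\ch(\cL'|_D)=\sum_j\ch(K_j)\,e^{jh}.
\]
Flatness of the Rees algebra and the identification of the special fibre are where I expect the main technical care to be needed. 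They follow the standard pattern, and $Z$ is only assumed integral Noetherian, so I would verify directly that the Rees module is torsion-free over $\Z[t]$ and hence flat.

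\emph{Step 3 (the inequality).} Write $n_j=\rk K_j$ and $c_j=c_1(K_j)\cdot h$. Then $\sum n_j=N$, and since $\sum_j\ch(K_j)=\ch(\OO^N)$ we also have $\sum c_j=0$ and $\sum\ch_2(K_j)=0$. Expanding gives
\[
N\,v(\cL'|_D)=\Bigl(\sum_jn_jj^2-\tfrac1N\bigl(\textstyle\sum_jn_jj\bigr)^2\Bigr)+2\sum_j j\,c_j
-\tfrac1N\Bigl[\bigl(\textstyle\sum_j c_j\bigr)^2+2\bigl(\textstyle\sum_j c_j\bigr)\bigl(\textstyle\sum_j n_jj\bigr)\Bigr].
\]
The bracket vanishes because $\sum c_j=0$. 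The first term is $N$ times the variance of the weights $j$ with multiplicities $n_j$, so it is $\ge0$. For the middle term, Abel summation together with $\sum c_j=0$ gives $\sum_j jc_j=\sum_k\sum_{j\ge k}c_j$. Each tail $\sum_{j\ge k}c_j=c_1(\OO_D^N/F_{k-1})\cdot h$ is the degree of a quotient of a trivial sheaf, equivalently minus the degree of the subsheaf $F_{k-1}\subset\OO^N$, and so it is $\ge0$ (it is computed off the torsion, so torsion in the quotient only helps). Hence $v(\cL'|_D)\ge0$.

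This is the same mechanism as in Lemma~\ref{lem:maj}, with the splitting replaced by triviality off $D$.
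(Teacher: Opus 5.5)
Your proof is correct. It rests on the same positivity as the paper's proof, but organizes the computation differently.

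\textbf{Same positivity.} After the paper's twist placing $\cL'$ between $\cI^c\OO_Z^N$ and $\OO_Z^N$, the paper filters $\cQ=\OO_Z^N/\cL'$ by $\cI^j\cQ$. Its pieces are $\cQ_j=\OO_D(j)^N/G_j$, so $\cQ_j(-j)=\OO_D^N/F_j$. Hence the paper's key inequality $\deg\cQ_j\ge j\,n_j$ is exactly your $\deg(\OO_D^N/F_j)\ge0$.

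\textbf{Computing $\ch(\cL'|_D)$.} The paper uses no degeneration. Since $D$ is Cartier, $[\cF]_D=[\cF\otimes\OO_D]-[\Tor_1(\cF,\OO_D)]$ is additive, $\Tor_1$ vanishes for torsion-free $\cF$, and $\Tor_1(G,\OO_D)\iso G(1)$ for $\OO_D$-modules $G$. Applied to your sequences $0\to\cL'_{j+1}\to\cL'_j\to G_j\to0$, this gives in two lines
\[
[\cL'|_D]=[\OO_D(k)^N]+\sum_{j=-k}^{k-1}\bigl([G_j]-[G_j(1)]\bigr)=\sum_j[K_j(j)],
\]
so the Rees construction can be dropped entirely.

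\textbf{The final inequality.} Your exact decomposition is cleaner than the paper's ending. You write $N\,v(\cL'|_D)$ as $N$ times the variance of the twists $j$ plus $2\sum_\ell\deg(\OO_D^N/F_{\ell-1})$. The paper instead only bounds $\ch_2$ from below and then needs the identity $N\sum_j(2j+1)n_j-(\sum_jn_j)^2=\sum_jn_j(N-n_j)+2\sum_{j>\ell}n_j(N-n_\ell)$. Your version also makes the analogy with Lemma~\ref{lem:maj} literal: $\cL'|_D$ has the Chern character of the split-looking sheaf $\bigoplus_jK_j(j)$.

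\textbf{If you keep the degeneration, the flatness check is misplaced.} Torsion-freeness of the Rees module over the base is automatic, since it sits inside $\cL'[t,t^{-1}]$. It is also not what you need, and over $\Z[t]$ torsion-free would not imply flat anyway. What you need is flatness of the restriction to $D\times\A^1$, namely the graded $\OO_D[t]$-module $\mathcal N=\bigoplus_j\cL'_j/\cI\cL'_{j-1}$.

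This module lives over $\C[t]$, so flatness means $t$-torsion-freeness. Both $t$ and the equation $s=\tau t^{-1}$ of $D\times\A^1$ are regular on the Rees module. So $\mathcal N$ is $t$-torsion-free exactly when $s$ is regular on the special fibre $\bigoplus_jG_j$. Concretely this is the equality $\cL'_j\cap\cI\cL'_{j-2}=\cI\cL'_{j-1}$, i.e.\ the injectivity $G_{j-1}(1)\hookrightarrow G_j$, which you already proved in Step 1. With that, the special fibre is $\bigoplus_jG_j/G_{j-1}(1)=\bigoplus_jK_j(j)$, as you claim.
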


\begin{lemma}[{\cite[Lemma~5.1]{OAI}}]\label{lem:oai-ss}
Let $f\colon Y\to\PP^2$ be finite with $f^*\OO(1)=\OO(H)$, $d=H^2$. For every nonzero torsion-free $\mu_H$-semistable sheaf $\cG$ on $Y$,
\[
v(f_*\cG)\le\frac14-\frac{K_Y^2-2c_2(Y)}{12d}=\frac14-\frac{\sigma(Y)}{4d},
\qquad
\frac{c_1(f_*\cG)}{\rk f_*\cG}=\mu(\cG)-\frac{K_Y\cdot H}{2d}-\frac32 .
\]
\end{lemma}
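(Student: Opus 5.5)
The plan is to compare Euler characteristics on both sides of $f$ and read off $c_1$ and $\ch_2$ of $f_*\cG$ from the Hilbert polynomial, using Lemma~\ref{lem:rr} on $Y$.

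\emph{Transfer of Euler characteristics.} Since $f$ is finite, $R^if_*=0$ for $i>0$. The projection formula gives $f_*(\cG(tH))=(f_*\cG)(t)$, and hence
\[
\chi(\PP^2,(f_*\cG)(t))=\chi(Y,\cG(t))\qquad\text{for all }t\in\Z .
\]
The sheaf $f_*\cG$ is coherent of rank $N=d\rho$, where $\rho=\rk\cG$, because $\deg f=H^2=d$.

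\emph{Riemann--Roch on $\PP^2$, completed into a square.} For a coherent sheaf $F$ of rank $N$ on $\PP^2$, put $b=c_1(F)/N$. Riemann--Roch gives
\[
\chi(F(t))=N\tfrac{(t+1)(t+2)}2+c_1(F)\bigl(t+\tfrac32\bigr)+\ch_2(F).
\]
Completing the square in $t$, this becomes
\[
\frac{\chi(F(t))}{N}=\frac12\Bigl(t+\frac32+b\Bigr)^2+\frac{v(F)}2-\frac18 .
\]

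\emph{Comparison with Lemma~\ref{lem:rr}.} Divide the formula of Lemma~\ref{lem:rr} by $d$ to obtain
\[
\frac{\chi(\cG(t))}{d\rho}=\frac12\Bigl(t+\frac{\mu_H-\kappa/2}{d}\Bigr)^2+\frac1d\Bigl(-\frac\sigma8+\frac{\alpha^2-\delta}2\Bigr).
\]
Both sides are the same polynomial in $t$, so we may compare coefficients.
\begin{itemize}
\item The linear terms give $b=\frac{\mu_H-\kappa/2}{d}-\frac32=\mu(\cG)-\frac{K_Y\cdot H}{2d}-\frac32$. This is the second formula of the statement, and it recovers Lemma~\ref{lem:horrocks}.
\item The constant terms give
\[
v(f_*\cG)=\frac14-\frac{\sigma}{4d}+\frac{\alpha^2-\delta}{d}.
\]
\end{itemize}

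\emph{Sign of the correction term.} By the Hodge index theorem, $\alpha^2\le0$ because $\alpha\cdot H=0$. By Bogomolov's inequality, $\delta=\Delta(\cG)/\rho^2\ge0$ because $\cG$ is $\mu_H$-semistable and torsion-free. The correction term is therefore nonpositive, which gives the inequality. Finally, $\sigma=\frac13(K_Y^2-2c_2(Y))$ converts $\sigma/(4d)$ into $(K_Y^2-2c_2)/(12d)$.

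There is no real obstacle. The only point needing care is the bookkeeping in completing the square, namely checking that the constant in the Riemann--Roch formula on $\PP^2$ is exactly $-\frac18$.
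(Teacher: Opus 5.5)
Your proof is correct and is essentially the paper's argument (the second half of the proof of Proposition~\ref{prop:synthesis}). Both match $\chi((f_*\cG)(t))=\chi(\cG(tH))$ via Riemann--Roch on $\PP^2$ and on $Y$, read the slope off the linear term and $v$ off the constant term, and conclude with Bogomolov and Hodge index. The only cosmetic difference is that you quote the completed-square form of Lemma~\ref{lem:rr} and complete the square on $\PP^2$ as in Remark~\ref{rem:chen-v}, whereas the paper expands $\nu\cdot(\nu-K_Y)$ directly and then substitutes $\beta$ and $\alpha$.
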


Theorem~\ref{thm:oai2} follows by decomposing $v(f_*P)$ along the Harder--Narasimhan filtration of $P$; we carry this out in Section~\ref{sec:synthesis} (Proposition~\ref{prop:synthesis}), and in every dimension in Section~\ref{sec:highdim}.

\begin{proposition}[{\cite[Prop.~6.1]{OAI}}]\label{prop:oai-example}
For a prime $n\ge3$, let $\pi\colon Y\to B$ be the smooth $(\Z/n)^5$-cover of the blow-up $B$ of $\PP^2$ at four general points, obtained by adjoining $n$-th roots of the ratios of the six lines of the complete quadrangle. There is an ample, globally generated divisor $H$ on $Y$ with $nH\sim\pi^*(-K_B)$, and
\[
H^2=5n^3,\qquad K_Y^2-2c_2(Y)=n^3(n^2-10).
\]
\end{proposition}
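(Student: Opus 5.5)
The plan is to realize $Y$ as a smooth abelian cover of the quintic del Pezzo surface branched along a normal crossing configuration, and then to compute $K_Y$ by Hurwitz and $c_2(Y)$ by stratifying $B$.

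\textbf{Step 1: the branch configuration.} $B$ is the del Pezzo surface of degree $5$, with $K_B^2=5$ and $e(B)=7$. Its ten $(-1)$-curves are the four exceptional curves $E_i$ and the strict transforms $L_{ij}=h-E_i-E_j$ of the six lines. They form a simple normal crossing divisor $D=\sum D_\lambda$ whose dual graph is the Petersen graph: each curve meets three others, and there are $15$ nodes. Blowing up separates the four triple points, and the three double points of the quadrangle become nodes of $D$. Since $-K_B\cdot D_\lambda=1$ for each of the ten curves, $D\sim-2K_B$.

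\textbf{Step 2: $Y$ is smooth and totally ramified of order $n$ along each $D_\lambda$.} The cover is the Kummer cover of Proposition~\ref{prop:kummer} with $k=6$, pulled back to $B$ and normalized; it is a $(\Z/n)^5$-cover of degree $n^5$. The inertia along $L_{ij}$ is generated by the class of $\ell_{ij}$. Along $E_i$ it is the sum of the classes of the three lines through the $i$-th point, which is nonzero because $n$ is prime. At each of the $15$ nodes the two inertia generators are independent in $(\Z/n)^5$; this is again a check with $n$ prime. The standard local model of abelian covers branched on simple normal crossings then gives smoothness, and every $D_\lambda$ is branched with index exactly $n$.

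\textbf{Step 3: the divisor $H$.} Write
\[
-K_B=3h-\textstyle\sum E_i=2L_{12}+L_{34}+E_1+E_2,
\]
a combination of branch curves. Each $\pi^*D_\lambda=nD'_\lambda$, so $H:=2L'_{12}+L'_{34}+E'_1+E'_2$ satisfies $nH\sim\pi^*(-K_B)$. Then $H$ is ample, being numerically $\frac1n$ times the pull-back of an ample class under a finite map.

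For global generation I would use the many expressions of $-K_B$ as effective combinations of the ten curves; these are permuted by $S_5=\mathrm{Aut}(B)$. The corresponding divisors in $|H|$ have no common point. Alternatively, identify $(Y,H)$ with $(Y_n,3A_n-E_n)$ and invoke Lemma~\ref{lem:rees}-type generation of $\cI_{\Sigma_n}(2)$ by the quadrics through the four points. I expect this base-point-freeness verification to be the main obstacle; the numerics are routine.

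\textbf{Step 4: the numerical invariants.} First,
\[
H^2=\deg\pi\cdot K_B^2/n^2=n^5\cdot 5/n^2=5n^3.
\]
By Hurwitz,
\[
K_Y=\pi^*\Bigl(K_B+\bigl(1-\tfrac1n\bigr)D\Bigr)=\pi^*\Bigl(\bigl(\tfrac2n-1\bigr)K_B\Bigr)=(n-2)H,
\]
so $K_Y^2=5n^3(n-2)^2$.

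For $c_2$, stratify $B$ into the $15$ nodes, ten open curves $\PP^1$ minus three points (each of Euler characteristic $-1$), and the open complement, whose Euler characteristic is $7+10-15=2$. Their preimages are $n^3$ points over each node, $n^4$ copies over each open curve, and an étale degree-$n^5$ cover of the complement. Hence
\[
c_2(Y)=2n^5-10n^4+15n^3 .
\]
Therefore
\[
K_Y^2-2c_2=n^3\bigl(5(n-2)^2-4n^2+20n-30\bigr)=n^3(n^2-10),
\]
as claimed.
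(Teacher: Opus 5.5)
Your numerics are correct, and they follow a different route from the paper. The paper does not reprove \cite[Prop.~6.1]{OAI}. In the text after the statement it identifies $(Y,H)$ with the Hirzebruch--Kummer pair $(Y_n,3A_n-E_n)$ and reads off $H^2=(9-T)n^3=5n^3$ and $K_Y^2-2c_2(Y)=3\sigma(Y_n)=n^3(n^2-10)$ from the general formulas of Proposition~\ref{prop:kummer}(d),(e). Very ampleness, hence global generation, comes from \cite[Lemmas~7.1, 7.3]{An1}. Your computation on the quintic del Pezzo surface is self-contained and specific to the quadrangle: Hurwitz for $K_Y$, and the Petersen stratification for $e(Y)=2n^5-10n^4+15n^3$. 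It also gives $K_Y\equiv(n-2)H$ along the way (compare Corollary~\ref{cor:qg}). Primality is never actually used. Modulo the diagonal of $(\Z/n)^6$, for every $n\ge2$, the class $e_{ij}+e_{ik}+e_{il}$ of the three lines through a point has order $n$, and the two inertia generators at each node are independent. A small point: $D\sim-2K_B$ does not follow from $-K_B\cdot D_\lambda=1$; compute $D=6h-2\sum_iE_i$ directly.

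The gap is global generation, which you only sketch, and your first route rests on an unjustified step. Take another effective expression $-K_B\sim\sum_\lambda a_\lambda D_\lambda$. The divisor $H_{(a)}=\sum_\lambda a_\lambda D'_\lambda$ only satisfies $nH_{(a)}\sim nH$, so a priori it differs from $H$ by an $n$-torsion class in $\mathrm{Pic}(Y)$. In that case the empty intersection of the supports of the $H_{(a)}$ says nothing about $\mathrm{Bs}|H|$.

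The missing input is the Kummer functions themselves. Put $A_{ij}=L'_{ij}+E'_i+E'_j$. The function $f=(\ell_{ij}/\ell_{kl})^{1/n}$ satisfies $f^n=\pi^*(\ell_{ij}/\ell_{kl})$, and $\pi^*(L_{ij}+E_i+E_j)=nA_{ij}$, so $\mathrm{div}(f)=A_{ij}-A_{kl}$. These differences generate all relations among the ten curves in $\mathrm{Pic}(B)$. Indeed, modulo them $\bigoplus_\lambda\Z D_\lambda$ is generated by the common class $A$ of the $A_{ij}$ together with $E_1,\dots,E_4$. A group with five generators surjecting onto $\mathrm{Pic}(B)\iso\Z^5$ maps isomorphically onto it. So every $H_{(a)}$ does lie in $|H|$. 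Moreover $H=A_{12}+(L'_{12}+L'_{34})\sim3A-E$ with $E=\sum_iE'_i$, which is exactly the identification your second route takes for granted.

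To finish, note that $L'_{12}+L'_{34}=A_{12}+A_{34}-E$ and $L'_{13}+L'_{24}$ both lie in $|2A-E|$. They have disjoint supports, because $L_{12},L_{34}$ are disjoint from $L_{13},L_{24}$ on $B$. Hence $|2A-E|$ is free. Also $|A|$ is free, since no point lies on all six lines. Therefore $H\sim(2A-E)+A$ is globally generated.

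If you prefer Lemma~\ref{lem:rees}, take $\nu=2$; this even gives very ampleness, as in \cite{An1}. The generators of $\cI_{\Sigma_n}(2)$ must then be the quadrics $z_{12}z_{34},z_{13}z_{24},z_{14}z_{23}$ in the Kummer coordinates. Pull-backs of plane conics through the four points are sections of $\OO_{X_n}(2n)$, not of $\OO_{X_n}(2)$.
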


For prime $n$, this is the complete-quadrangle surface of Table~\ref{tab:families}, as \cite[\S1]{OAI} states. Its polarization is $H=3J-E$ \cite[\S6.4, (16)]{OAI}, where the six sections of $\OO_Y(J)$ constructed there are the Kummer coordinates; hence $J=A_n$, $E=E_n$ and $H=3A_n-E_n$ \cite[\S7.2]{An1}. For $n=7$: $H^2=1715$ and $K^2-2c_2=13377>10290=6H^2$, so Theorem~\ref{thm:oai2} gives Theorem~\ref{thm:oai1}. Here $K\equiv5H$, so the Main claim of \cite{Ch} does not apply.

\section{Synthesis: one identity, three mechanisms}\label{sec:synthesis}

The three obstructions of Sections~\ref{sec:ulrich}, \ref{sec:acm} and~\ref{sec:oai} can be read as a single computation on $\PP^2$. We isolate this computation and prove it in full. The identity of Proposition~\ref{prop:synthesis} is implicit in \cite[(10), (11)]{OAI}, where it is used with inequalities, and its aCM case is the defect identity \cite[Prop.~2.6]{An2}; what we add is that all three obstructions are read off from it. Here $v$ is the variance of Section~\ref{sec:oai}, and for a torsion-free sheaf $\cG$ of rank $u$ on $Y$ we use $\nu$, $\mu_H=\nu\cdot H$, $\delta$, $\beta=\nu-K_Y/2$ and $\alpha=\beta-\frac{\beta\cdot H}{d}H$ as in Section~\ref{sec:acm}.

\begin{proposition}[cf.\ {\cite[(10), (11)]{OAI}}, {\cite[Prop.~2.6]{An2}}]\label{prop:synthesis}
Let $f\colon Y\to\PP^2$ be finite with $f^*\OO(1)=\OO_Y(H)$, $d=H^2$. Let $P$ be a torsion-free sheaf on $Y$ with a filtration whose successive quotients $G_i$ are torsion-free and $\mu_H$-semistable, with weights $w_i=\rk G_i/\rk P$ and push-forward slopes $b_i=c_1(f_*G_i)/\rk f_*G_i$, and let $v_b=\sum_iw_i(b_i-\bar b)^2$ with $\bar b=\sum_iw_ib_i$. Then
\[
v(f_*P)=\sum_iw_i\,v(f_*G_i)+v_b,\qquad
v(f_*G_i)=\frac14-\frac{\sigma}{4d}-\frac{\delta(G_i)-\alpha(G_i)^2}{d}\ \le\ \frac14-\frac{\sigma}{4d},
\]
with $\delta$, $\alpha$ as in Lemma~\ref{lem:rr}.
\end{proposition}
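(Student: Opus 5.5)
The proof has two independent parts: an additivity statement for the variance $v$ along a filtration, and an exact computation of $v(f_*G)$ for a single sheaf $G$ by Riemann--Roch.

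\emph{Additivity.} Since $f$ is finite, $f_*$ is exact, so $f_*P$ carries a filtration with successive quotients $f_*G_i$. On $\PP^2$ the invariants $N=\rk$, $c_1$ and $\ch_2$ are additive in exact sequences. Write $N_i=\rk f_*G_i=d\,\rk G_i$, so that $N_i/N=w_i$. Then
\[
\frac{2\ch_2(f_*P)}{N}=\sum_iw_i\frac{2\ch_2(f_*G_i)}{N_i}
\quad\text{and}\quad
\frac{c_1(f_*P)}{N}=\sum_iw_ib_i=\bar b .
\]
Substituting these into the definition of $v$ and using
\[
\sum_iw_ib_i^2-\bar b^2=\sum_iw_i(b_i-\bar b)^2=v_b
\]
gives $v(f_*P)=\sum_iw_iv(f_*G_i)+v_b$. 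This is just the law of total variance.

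\emph{Computation of $v(f_*G)$.} Fix $G$ torsion-free of rank $u$ on $Y$, and put $F=f_*G$ of rank $N=du$ with $b=c_1(F)/N$. By Riemann--Roch on $\PP^2$,
\[
\frac{\chi(F(t))}{N}=\frac12(t+b)^2+\frac32(t+b)+1+\frac{v(F)}{2}.
\]
Indeed, $\chi(F(t))$ is quadratic in $t$ with leading coefficient $N/2$, and completing the square isolates $\ch_2-c_1^2/2N$. Since $f$ is finite, $\chi(F(t))=\chi(G(t))$, so we can compare with Lemma~\ref{lem:rr}. Dividing that formula by $d$ gives
\[
\frac{\chi(G(t))}{du}=\frac12\Bigl(t+\frac{\mu_H-\kappa/2}{d}\Bigr)^2-\frac{\sigma}{8d}+\frac{\alpha^2-\delta}{2d}.
\]
Set $x=t+b+\tfrac32$. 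By Lemma~\ref{lem:horrocks} (or Lemma~\ref{lem:oai-ss}), $b=(\mu_H-\kappa/2)/d-\tfrac32$, so the quantity $t+(\mu_H-\kappa/2)/d$ in the second formula is exactly $x$. The first formula becomes
\[
\frac{\chi(F(t))}{N}=\frac12x^2-\frac98+1+\frac v2=\frac12x^2-\frac18+\frac v2 .
\]
Equating the constant terms of the two expressions gives
\[
-\frac18+\frac v2=-\frac{\sigma}{8d}+\frac{\alpha^2-\delta}{2d},
\]
that is,
\[
v(f_*G)=\frac14-\frac{\sigma}{4d}-\frac{\delta-\alpha^2}{d}.
\]

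\emph{The inequality.} When $G=G_i$ is $\mu_H$-semistable, Bogomolov's inequality gives $\delta\ge0$, and the Hodge index theorem gives $\alpha^2\le0$ because $\alpha\cdot H=0$. Hence $\delta-\alpha^2\ge0$, which yields the bound $v(f_*G_i)\le\frac14-\frac{\sigma}{4d}$.

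The main obstacle is the bookkeeping in the computation step. One has to check that $\chi$ is preserved under $f_*$ for all twists: this holds because $f^*\OO(1)=\OO(H)$ and the projection formula, and it is needed even though $f$ need not be flat (finiteness suffices). One also has to check the normalization constants in Riemann--Roch on $\PP^2$, in particular that the constant term $1+\tfrac{v}{2}-\tfrac98$ matches the constant $-\tfrac{\sigma}{8d}$ in Lemma~\ref{lem:rr}. A sanity check fixes these constants: for $F=\OO_{\PP^2}$, i.e.\ $v=0$ and $b=0$, the first formula gives $\chi(\OO(t))=\binom{t+2}{2}$, as it should.
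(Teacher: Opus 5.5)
Your proof is correct and follows the paper's route: the same law-of-total-variance decomposition, and the same comparison of $\chi(f_*G(t))$ from Riemann--Roch on $\PP^2$ with $\chi(G(tH))$ on $Y$. The one difference is that you take the $Y$-side from Lemma~\ref{lem:rr}, which already packages the paper's rewriting in terms of $\beta$ and $\alpha$, and you complete the square on $\PP^2$ in $x=t+b+\frac32$, so equating constant terms gives the formula at once; you also do not need Lemma~\ref{lem:horrocks}, since the slope relation follows from equating the linear coefficients, as the paper does (and $f$ is in fact flat by miracle flatness, although only finiteness is used).
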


\begin{proof}
\emph{The decomposition.} The functor $f_*$ is exact, so $f_*P$ has a filtration with successive quotients $f_*G_i$, of ranks $N_i=w_iN$ with $N=\rk f_*P$. Rank, $c_1$ and $\ch_2$ are additive, so with $e_i=\ch_2(f_*G_i)/N_i$ one has $\bar b=c_1(f_*P)/N$ and
\[
v(f_*P)=\sum_iw_i\,2e_i-\bar b^2=\sum_iw_i\bigl(2e_i-b_i^2\bigr)+\Bigl(\sum_iw_ib_i^2-\bar b^2\Bigr)=\sum_iw_i\,v(f_*G_i)+v_b .
\]
\emph{The variance of one push-forward.} Let $\cG$ be torsion-free of rank $u$ on $Y$ and $F=f_*\cG$, of rank $ud$; put $b=c_1(F)/(ud)$ and $e=\ch_2(F)/(ud)$. Since $f$ is finite, $\chi(F(t))=\chi(\cG(tH))$ for all $t$. By Riemann--Roch on $\PP^2$, $\chi(F(t))/(ud)=\frac{t^2}2+\bigl(b+\frac32\bigr)t+1+\frac32b+e$. By Riemann--Roch on $Y$,
\[
\frac{\chi(\cG(tH))}{ud}=\frac{t^2}2+\frac{\mu_H-\kappa/2}{d}\,t+\frac1d\Bigl(\chi(\OO_Y)+\frac{\nu\cdot(\nu-K_Y)}2-\frac\delta2\Bigr),\qquad\kappa=K_Y\cdot H .
\]
The linear terms give $b+\frac32=(\mu_H-\kappa/2)/d=\beta\cdot H/d$. The constant terms give $2e=\frac2d\bigl(\chi(\OO_Y)+\frac12\nu\cdot(\nu-K_Y)-\frac\delta2\bigr)-2-3b$, hence
\[
v(F)=2e-b^2=\frac{2\chi(\OO_Y)+\nu\cdot(\nu-K_Y)-\delta}{d}-\Bigl(b+\frac32\Bigr)^2+\frac14 .
\]
Now $\nu\cdot(\nu-K_Y)=\beta^2-\frac14K_Y^2$ and $\beta^2=\alpha^2+(\beta\cdot H)^2/d$, so the terms $(\beta\cdot H)^2/d^2$ cancel and
\[
v(F)=\frac14+\frac{8\chi(\OO_Y)-K_Y^2}{4d}+\frac{\alpha^2-\delta}{d}=\frac14-\frac{\sigma}{4d}-\frac{\delta(\cG)-\alpha(\cG)^2}{d}.
\]
For $\cG=G_i$ semistable, $\delta(G_i)\ge0$ by Bogomolov's inequality and $\alpha(G_i)^2\le0$ by the Hodge index theorem, since $\alpha\cdot H=0$.
\end{proof}

The linear terms also show that $b_i-\mu_H(G_i)/d$ does not depend on $i$: the push-forward slopes are the slopes of the $G_i$, shifted and rescaled by the same amounts (Lemmas~\ref{lem:horrocks} and~\ref{lem:oai-ss}).

Hence $v(f_*P)\le\frac14-\frac{\sigma}{4d}+v_b$ always. Each result combines this upper bound with a lower bound for $v(f_*P)$ and an upper bound for $v_b$; Table~\ref{tab:synthesis} lists them.

\begin{table}[ht]
\centering
\small
\renewcommand{\arraystretch}{1.25}
\begin{tabular}{@{}llll@{}}
	\toprule
	& what is known about $f_*P$ & control of the slopes & conclusion\\
	\midrule
	Ulrich (Prop.~\ref{prop:ulrich}) & $f_*\cE\iso\OO^N$, so $v=v_a=0$ & majorization: $v_b\le v_a=0$ & $\sigma\le d$\\
	aCM (Thm.~\ref{thm:acm}) & $f_*\cE\iso\bigoplus\OO(a_j)$, so $v=v_a$ & majorization: $v_b\le v_a$ & $\sigma\le d$\\
	local (Thm.~\ref{thm:oai2}) & restriction of a sheaf $\iso\OO^N$ off $D$: & window of width $1$: & $\sigma\le2d$\\
	& $v\ge0$ (Lemma~\ref{lem:variance}) & $v_b\le\frac14$ & \\
	local, [Ch] & $\chi(f_*P)\ge0$ (claimed, $K\equiv4H$): & centred window: & $\sigma\le d$\\
	& $v\ge\frac14-(\bar b+\frac32)^2$ (Rem.~\ref{rem:chen-v}) & $\sum_iw_i(b_i+\frac32)^2\le\frac14$ & \\
	\bottomrule
\end{tabular}
\caption{One identity, three mechanisms.}\label{tab:synthesis}
\end{table}

\begin{enumerate}[label=(\roman*),leftmargin=2em]
	\item \emph{Ulrich.} For a linear projection, $f_*\cE\iso\OO^N$ (Proposition~\ref{prop:ESW}), so $v(f_*\cE)=v_a=0$. Lemma~\ref{lem:maj}, applied to the push-forward of the Harder--Narasimhan filtration, gives $v_b\le v_a=0$, so all $b_i$ coincide; since they are strictly decreasing, $\cE$ is $\mu_H$-semistable, which recovers \cite[Thm.~2.9]{CH}. Hence $0\le\frac14-\frac\sigma{4d}$. This is Proposition~\ref{prop:ulrich}; (i) is the case $v_a=0$ of (ii).
	\item \emph{aCM.} $f_*\cE$ splits, $v(f_*\cE)=v_a$, and Lemma~\ref{lem:maj} applied to the push-forward of the Harder--Narasimhan filtration gives $v_b\le v_a$. Hence $0\le v_a-v_b\le\frac14-\frac\sigma{4d}$. Keeping the equality of Proposition~\ref{prop:synthesis}, this is exactly the defect identity of Proposition~\ref{prop:defect}.
	\item \emph{Local.} Let $M$ be a small CM module over $\Rh(Y,H)$ and $P$ the restriction to the exceptional surface of an extension normalized by Lemma~\ref{lem:normalize}(a). We do not assume that $f_*P$ splits; but $f_*P$ is the restriction to the exceptional plane of a sheaf that is trivial off it, so $v(f_*P)\ge0$ by Lemma~\ref{lem:variance}. The normalized slopes $\mu_H(G_i)/d$ of the Harder--Narasimhan factors lie in an interval of length $\le1$, hence so do the $b_i$, and a weighted variance of points in such an interval is at most $\frac14$: with $\gamma$ the midpoint, $v_b\le\sum_iw_i(b_i-\gamma)^2\le\frac14$. Hence $0\le\frac12-\frac\sigma{4d}$, i.e.\ $\sigma\le2d$. Given Lemmas~\ref{lem:normalize}(a) and~\ref{lem:variance} and the identification of $f_*P$ in Section~\ref{sec:oai}, this proves Theorem~\ref{thm:oai2}; Section~\ref{sec:highdim} proves these ingredients as well, in arbitrary dimension.
\end{enumerate}

For surfaces, the factor $2$ between $\sigma\le d$ and $\sigma\le2d$ comes from replacing majorization, $v_b\le v_a$, available when the push-forward splits, by the bound $v_b\le\frac14$ for the variance of slopes in a window of width one. In dimension $m$ the two bounds become $(m+1)H^m$ and $(m+4)H^m$ (Section~\ref{ssec:identity}). Chen's argument (Section~\ref{sec:chen}) fits the same scheme: measuring $\chi$ instead of $v$ gives a stronger lower bound for $v(f_*P)$, at the price of a window centred at half the normalized slope of $\omega_Y$, so that Serre duality, $s\mapsto\lambda-s$ with $\lambda=K_Y\cdot H/d$, maps it into the closed window $[\frac\lambda2-\frac12,\frac\lambda2+\frac12]$ ($s\mapsto4-s$ and $[\frac32,\frac52]$ when $K\equiv4H$). In the subcanonical case $K\equiv4H$ it would recover the graded bound $\sigma\le d$ locally.

\begin{remark}[Chen's mechanism as a lower bound for $v$]\label{rem:chen-v}
For a coherent sheaf $F$ of rank $N>0$ on $\PP^2$ with $b=c_1(F)/N$, Riemann--Roch gives $\chi(F)/N=1+\frac32b+\frac12\bigl(v(F)+b^2\bigr)$, that is,
\[
\frac{\chi(F)}{N}=\frac12\Bigl(b+\frac32\Bigr)^2+\frac{v(F)}2-\frac18 .
\]
Let $P$ be a torsion-free sheaf on $Y$ with Harder--Narasimhan factors $G_i$, weights $w_i$, push-forward slopes $b_i$ with mean $\bar b$, and put $\lambda=K_Y\cdot H/d$. Since $\sum_iw_i(b_i+\frac32)^2=v_b+(\bar b+\frac32)^2$ and $b_i+\frac32=\mu(G_i)-\frac\lambda2$ (Lemma~\ref{lem:oai-ss}), Proposition~\ref{prop:synthesis} and $\chi(P)=\chi(f_*P)$ give
\[
\frac{\chi(P)}{\rk P}=\frac d2\sum_iw_i\Bigl(\mu(G_i)-\frac\lambda2\Bigr)^2-\frac\sigma8-\frac12\sum_iw_i\bigl(\delta(G_i)-\alpha(G_i)^2\bigr),
\]
which is also Lemma~\ref{lem:rr} at $t=0$, summed over the factors. If all $\mu(G_i)$ lie in $[\frac\lambda2-\frac12,\frac\lambda2+\frac12]$, then $\chi(P)\le\frac{d-\sigma}8\rk P$; for $K_Y\equiv4H$ this is Proposition~\ref{prop:chen-chi}, even for the closed window. In terms of $v$, the condition $\chi(f_*P)\ge0$ reads $v(f_*P)\ge\frac14-(\bar b+\frac32)^2$. Under $K_Y\equiv4H$, for a small CM module, the local input of \cite{Ch} (Lemma~\ref{lem:reflexive}(b), Lemma~\ref{lem:tails} and Proposition~\ref{prop:persist}), if correct, gives $\chi(P)\ge0$ for the normalized extension. It thus replaces the lower bound $v(f_*P)\ge0$ of Lemma~\ref{lem:variance} by $v(f_*P)\ge\frac14-(\bar b+\frac32)^2$, and the bound $v_b\le\frac14$ by $\sum_iw_i(b_i+\frac32)^2\le\frac14$, which needs a window centred at $\frac\lambda2$ (Lemma~\ref{lem:normalize}(b)) rather than an arbitrary window of width one (Lemma~\ref{lem:normalize}(a)). With Proposition~\ref{prop:synthesis} this gives
\[
\frac14\le\frac{d-\sigma}{4d}+\sum_iw_i\Bigl(b_i+\frac32\Bigr)^2\le\frac{d-\sigma}{4d}+\frac14,
\]
i.e.\ $\sigma\le d$. With the mean slope at the centre the two lower bounds differ by exactly $\frac14$; this is the difference between $\sigma\le d$ and $\sigma\le2d$.
\end{remark}

\subsection*{Comparison of the local criteria}

Both local criteria lie inside the graded range $\sigma>d$, but they are incomparable with each other. The quadrangle ring with $n=6$ ($d=1080<\sigma=1872<2160=2d$) is covered locally only by the Main claim of \cite{Ch}. Conversely, the criterion of \cite{OAI} also applies to pairs outside the hypothesis $K\equiv4H$ of \cite{Ch}, such as the quadrangle with $n=7$. The Hesse pairs are covered locally by Theorem~\ref{thm:oai2} for every $n\ge3$; we make this explicit in Corollary~\ref{cor:hesse-local}.

\begin{table}[ht]
\centering
\small
\renewcommand{\arraystretch}{1.15}\setlength{\tabcolsep}{4pt}
\begin{tabular}{@{}lccccc@{}}
	\toprule
	pair $(Y,H)$ & $d=H^2$ & $\sigma$ & $\sigma>d$ & $\sigma>2d$ & $K\equiv4H$, $\sigma>d$ \\
	& & & [An2] (graded) & [OAI] (local) & [Ch] (local, proposed)\\
	\midrule
	Hesse $Y_n$, $4A-E$ & $7n^9$ & $(3n^2-11)n^9$ & $n\ge3$ & $n\ge3$ & never\\
	dual Hesse, $6A-E$ & $24n^6$ & $(2n^2-13)n^6$ & $n\ge5$ & $n\ge6$ & never\\
	quadrangle, $3A-E$ & $5n^3$ & $\frac13(n^2-10)n^3$ & $n\ge6$ & $n\ge7$ & $n=6$\\
	\bottomrule
\end{tabular}
\caption{The three families against the three criteria. On the quadrangle family $K\sim(n-2)H$; on the Hesse and dual Hesse families $K$ is not proportional to $H$. The sources contain the Hesse and quadrangle entries of the graded column \cite[Cors.~1.4, 4.3]{An2}, the quadrangle at $n=6$ in the [Ch] column \cite[Consequence~1.2]{Ch}, the quadrangle at $n=7$ in the [OAI] column, and the non-proportionality of $K$ and $H$ on the Hesse family \cite[Rem.~4.4]{An2}; the other entries apply the criteria here.}\label{tab:coverage}
\end{table}

\begin{corollary}[consequence of {\cite[Thm.~1.2]{OAI}}, proved in Theorem~\ref{thm:local-m}]\label{cor:hesse-local}
For every $n\ge3$, the completed section ring $\Rh(Y_n,4A_n-E_n)$ of the Hesse pair has no small Cohen--Macaulay module. In particular this holds for the ball quotient $Y_3$ of Theorem~\ref{thm:an1}. The same holds for the dual Hesse pairs with $n\ge6$ and for the complete-quadrangle pairs with $n\ge7$.
\end{corollary}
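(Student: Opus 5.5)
The plan is to apply Theorem~\ref{thm:oai2} (equivalently the case $m=2$ of Theorem~\ref{thm:local-m}, whose proof is given in Section~\ref{sec:highdim}) to each of the three families. Since a small CM module over $\Rh(Y,H)$ forces $\sigma(Y)\le 2H^2$, it suffices to check $\sigma(Y_n)>2H_n^2$ in the stated ranges. We also need to confirm that each polarization is ample and globally generated, which is the hypothesis of Theorem~\ref{thm:oai2}.

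\emph{Hypotheses on $H$.} For the Hesse pairs, $4A_n-E_n$ is very ample by Proposition~\ref{prop:hesse-va}. For the dual Hesse pairs ($6A_n-E_n$) and the quadrangle pairs ($3A_n-E_n$), the corresponding statements are the analogues of Proposition~\ref{prop:hesse-va} proved in \cite[\S\S7, 8]{An1} via Lemma~\ref{lem:rees}, with $\nu=5$ and $\nu=2$ respectively. For the quadrangle, and also for prime $n$, Proposition~\ref{prop:oai-example} directly gives ampleness and global generation. Very ample implies ample and globally generated, so Theorem~\ref{thm:oai2} applies in all three cases.

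\emph{Numerics.} Use Table~\ref{tab:families}, dividing through by $n^{k-3}$:
\begin{itemize}[leftmargin=2em]
\item Hesse: we need $3n^2-11>14$, i.e.\ $n^2>25/3$, which holds for $n\ge3$ ($27-11=16>14$).
\item Dual Hesse: we need $2n^2-13>48$, i.e.\ $n^2>30.5$, which holds exactly for $n\ge6$.
\item Quadrangle: we need $(n^2-10)/3>10$, i.e.\ $n^2>40$, which holds exactly for $n\ge7$.
\end{itemize}
These values of $\sigma$ come from Proposition~\ref{prop:kummer}(e) with the arrangement data listed in Section~\ref{sec:kummer}. For the ball quotient, $n=3$ gives $H^2=7\cdot3^9$ and $\sigma=16\cdot 3^9$, and $16>14$, so $Y_3$ is covered by the Hesse case.

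The only nontrivial ingredient is Theorem~\ref{thm:oai2} itself, i.e.\ Theorem~\ref{thm:local-m} with $m=2$, where $(m+4)H^m=6H^2$ and $K^2-2c_2=3\sigma$. Its proof is deferred to Section~\ref{sec:highdim}. That proof combines three pieces: the variance identity of Proposition~\ref{prop:synthesis}, the lower bound $v\ge0$ of Lemma~\ref{lem:variance}, and the width-one window of Lemma~\ref{lem:normalize}(a). With these in place the corollary is a finite check of the inequalities above. The mild point to watch is verifying global generation for the dual Hesse and quadrangle polarizations for all $n$ in range, not only prime $n$; I would cite \cite{An1} for this.
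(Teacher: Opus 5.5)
Your proposal is correct and is the paper's proof: check that each polarization is ample and globally generated, then verify $\sigma(Y_n)>2H_n^2$ in the three ranges and apply Theorem~\ref{thm:oai2}, i.e.\ the case $m=2$ of Theorem~\ref{thm:local-m}. The paper settles the point you flagged by citing \cite[Lemmas~7.1, 7.3]{An1}, which give very ampleness of $6A_n-E_n$ and $3A_n-E_n$ for all $n\ge2$, not only for prime $n$.
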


\begin{proof}
By Proposition~\ref{prop:hesse-va}, $H_n$ is very ample, hence ample and globally generated; likewise $6A_n-E_n$ (dual Hesse) and $3A_n-E_n$ (quadrangle) are very ample for all $n\ge2$ \cite[Lemmas~7.1, 7.3]{An1}. By Proposition~\ref{prop:kummer} and Theorem~\ref{thm:an1},
\[
K^2-2c_2=3\sigma=3(3n^2-11)n^9\ \ge\ 48\,n^9\ >\ 42\,n^9=6H_n^2
\]
for $n\ge3$, so Theorem~\ref{thm:oai2} applies. For the dual Hesse pairs, $3(2n^2-13)>6\cdot24$ if and only if $n\ge6$; for the quadrangle, $n^2-10>30$ if and only if $n\ge7$.
\end{proof}

Theorem~\ref{thm:oai2} is the case $m=2$ of Theorem~\ref{thm:local-m}, proved in Section~\ref{sec:highdim}; so the corollary rests on that proof.

Thus the first counterexample to Question~\ref{q:ESW}, the Hesse ball quotient, also yields, through the argument of \cite{OAI}, a counterexample to Hochster's conjecture (Question~\ref{conj:hochster}). The Hesse surfaces are ball quotients only for $n=3$, and $\sigma(Y_n)/H_n^2=(3n^2-11)/7\to\infty$, so the margin over $2d$ grows with $n$.

\section{Higher dimension}\label{sec:highdim}

The local rings of Sections~\ref{sec:chen}--\ref{sec:synthesis} are three-dimensional. We consider two constructions in higher dimension. The first is formal and produces non-isolated singularities (Section~\ref{ssec:powerseries}). The second extends the argument of Section~\ref{sec:oai} to section rings of varieties of dimension $m\ge2$, as \cite{Lo} and \cite[\S5]{An2} extend the projective and graded obstructions, and produces isolated singularities in every dimension. This second route is not contained in the five texts. We give complete proofs; for $m=2$ they supply all details of Theorem~\ref{thm:oai2}.

\subsection{Adjoining power series}\label{ssec:powerseries}
The following is standard; its graded analogue is noted at the end of \cite[\S5]{An2}.

\begin{proposition}\label{prop:powerseries}
Let $(B,\mathfrak n)$ be a Noetherian local ring and $B'=B[[x]]$. If $M$ is a small CM $B'$-module, then $M/xM$ is a small CM $B$-module. Hence, if $B$ has no small CM module, neither has $B[[x_1,\dots,x_k]]$ for any $k\ge0$.
\end{proposition}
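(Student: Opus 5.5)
The plan is to check that $x$ is a nonzerodivisor on $M$ lying in the maximal ideal, and then pass the depth condition down to $M/xM$. Let $\mathfrak n'=\mathfrak n B'+xB'$ be the maximal ideal of $B'$; then $\dim B'=\dim B+1$. Since $M$ is small CM over $B'$, it has depth $\dim B+1$, so $M$ has no associated prime equal to $\mathfrak n'$. What we actually need, however, is that $x$ itself is $M$-regular. For this I would use that a small CM module over a local ring has all its associated primes $\mathfrak p$ satisfying $\dim B'/\mathfrak p=\dim B'$. Indeed, $\depth M\le\dim B'/\mathfrak p$ for every $\mathfrak p\in\operatorname{Ass}M$, and $\depth M=\dim B'$.

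Now suppose $x\in\mathfrak p$ for such a $\mathfrak p$. Then $B'/\mathfrak p$ is a quotient of $B'/xB'\iso B$, so $\dim B'/\mathfrak p\le\dim B<\dim B'$, which is a contradiction. Hence $x$ avoids every associated prime of $M$, and so $x$ is $M$-regular. Standard depth theory then gives $\depth_{B'}(M/xM)=\dim B'-1=\dim B$. Also $M/xM\ne0$ by Nakayama, because $x\in\mathfrak n'$ and $M\ne0$.

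Next I view $M/xM$ as a module over $B=B'/xB'$. It is finitely generated over $B$. Its depth over $B$ equals its depth over $B'$, since depth over the maximal ideal is computed by local cohomology or Ext, and the maximal ideal of $B$ is the image of $\mathfrak n'$. So $M/xM$ has depth $\dim B$, and it is a small CM $B$-module. This proves the first claim.

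The second claim follows by induction on $k$, writing $B[[x_1,\dots,x_k]]=B[[x_1,\dots,x_{k-1}]][[x_k]]$ and applying the first claim $k$ times. The only point needing care is the regularity of $x$ on $M$, which is handled above by the associated-primes dimension bound. Everything else is routine.
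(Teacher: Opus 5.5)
Your proof is correct and follows essentially the paper's route: show that $x$ is $M$-regular, conclude by Nakayama and the depth drop that $M/xM$ is a nonzero $B$-module of depth $\dim B$, and induct on $k$. The only difference is that you derive the regularity of $x$ directly from $\dim B'/\mathfrak p\ge\depth M$ for $\mathfrak p\in\operatorname{Ass}M$. The paper instead cites the fact that a system of parameters, here one containing $x$ since $\dim B'/xB'=\dim B'-1$, is a regular sequence on a maximal Cohen--Macaulay module, a fact proved in essentially the same way.
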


\begin{proof}
The ring $B'$ is local with maximal ideal $(\mathfrak n,x)$, $\dim B'=\dim B+1$ and $B'/xB'=B$, so $x$ is part of a system of parameters of $B'$. Every system of parameters of $B'$ is a regular sequence on the maximal Cohen--Macaulay module $M$ \cite[Thm.~2.1.2]{BrHe}, so $x$ is $M$-regular. Hence $M/xM$ is a finitely generated $B$-module, nonzero by Nakayama, with $\depth_B(M/xM)=\depth_{B'}M-1=\dim B$. Induct on $k$.
\end{proof}

\begin{corollary}\label{cor:powerseries}
Let $(\Rh,\widehat\m)$ be a three-dimensional complete normal local domain with residue field $\C$ and without small CM modules, for instance one of the rings of Corollary~\ref{cor:hesse-local}. For every $k\ge1$, $B'=\Rh[[x_1,\dots,x_k]]$ is a complete Noetherian normal local domain of dimension $3+k$, with residue field $\C$, without small CM modules. Its singular locus contains $V(\widehat\m B')$, which has dimension $k$; in particular the singularity is not isolated.
\end{corollary}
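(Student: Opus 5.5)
The statement to prove is Corollary~\ref{cor:powerseries}. The plan is to combine Proposition~\ref{prop:powerseries} with standard facts about power series extensions.

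\emph{Step 1: the ring-theoretic properties of $B'$.} Completeness and Noetherianity are standard, and $B'$ is local with maximal ideal $(\widehat\m,x_1,\dots,x_k)$ and residue field $\C$. The dimension is $\dim B'=\dim\Rh+k=3+k$. For the domain and normality properties I use that $\Rh\to B'$ is flat with regular fibres. Indeed, $\Rh$ is excellent (it is complete), and $B'$ is the completion of the polynomial ring $\Rh[x_1,\dots,x_k]$ at an ideal. A polynomial ring over a normal domain is a normal domain, so $\Rh[x_1,\dots,x_k]$ is excellent normal. Completion of an excellent normal local ring is normal, so $B'$ is normal. Being a normal local ring, $B'$ is a domain. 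Alternatively, induct on $k$ using the standard fact that $A$ normal Noetherian implies $A[[x]]$ normal.

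\emph{Step 2: no small CM modules.} This is immediate from Proposition~\ref{prop:powerseries} with $B=\Rh$.

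\emph{Step 3: the singular locus.} Let $\mathfrak p=\widehat\m B'$, a prime with $B'/\mathfrak p\iso\C[[x_1,\dots,x_k]]$, of dimension $k$. The map $\Rh\to B'_{\mathfrak p}$ is flat local, and its closed fibre $B'_{\mathfrak p}/\widehat\m B'_{\mathfrak p}$ is the field $\mathrm{Frac}\,\C[[x]]$. If $B'_{\mathfrak p}$ were regular, then by descent of regularity along flat local maps, $\Rh$ would be regular. A regular ring is Cohen--Macaulay, and then $\Rh$ itself would be a small CM module, contradicting the hypothesis. Hence $\mathfrak p\in\Sing B'$. Since the singular locus is closed ($B'$ is excellent), $V(\mathfrak p)\subseteq\Sing B'$. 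This is a closed set of dimension $k\ge1$, so it is not just the closed point, and the singularity is not isolated.

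The only step needing care is Step~1, specifically normality of $B'$. I would cite the standard result that normality passes to formal power series rings over Noetherian normal domains (e.g.\ via Serre's criterion and flatness with regular fibres), rather than prove it.
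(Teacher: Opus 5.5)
Your proposal is correct and follows the paper's proof essentially step for step. You get normality from completing the excellent normal local ring $\Rh[x_1,\dots,x_k]_{(\widehat\m,x_1,\dots,x_k)}$, use Proposition~\ref{prop:powerseries} for the absence of small CM modules, and descend regularity along the flat local map $\Rh\to B'_{\mathfrak p}$ at $\mathfrak p=\widehat\m B'$. The only difference is cosmetic: the paper obtains $V(\mathfrak p)\subseteq\Sing B'$ from the fact that a localization of a regular local ring is regular, so it does not need closedness of the singular locus (excellence of $B'$) at that step.
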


\begin{proof}
$B'$ is the completion of the excellent normal local ring $\Rh[x_1,\dots,x_k]_{(\widehat\m,x_1,\dots,x_k)}$, hence a complete normal local domain \cite[\S32]{Mat}. Proposition~\ref{prop:powerseries} excludes small CM modules. Let $\mathfrak p=\widehat\m B'$, a prime with $B'/\mathfrak p=\C[[x_1,\dots,x_k]]$. The local homomorphism $\Rh\to B'_{\mathfrak p}$ is flat, and its closed fibre is the fraction field of $B'/\mathfrak p$. If $B'_{\mathfrak p}$ were regular, $\Rh$ would be regular \cite[Thm.~23.7]{Mat}, and then $\Rh$ itself would be a small CM module. So $B'_{\mathfrak p}$ is singular, and so are all localizations at primes containing $\mathfrak p$.
\end{proof}

\subsection{Setting and statement}
For the rest of this section $X$ is a smooth connected projective variety of dimension $m\ge2$ and $H$ an ample, globally generated divisor on $X$. Put
\[
d=H^m,\qquad \lambda=\frac{K_X\cdot H^{m-1}}{d},\qquad R=R(X,H)=\bigoplus_{t\ge0}H^0\bigl(X,\OO_X(tH)\bigr),\qquad \m=R_+,
\]
and let $\Rh$ be the $\m$-adic completion of $R$. For a torsion-free sheaf $\cG$ of rank $u>0$ on $X$ put
\[
\mu(\cG)=\frac{c_1(\cG)\cdot H^{m-1}}{u\,d},\qquad \nu(\cG)=\frac{c_1(\cG)}{u},\qquad \delta(\cG)=\frac{\Delta(\cG)}{u^2},
\]
where $\Delta(\cG)=2uc_2(\cG)-(u-1)c_1(\cG)^2$.
Harder--Narasimhan filtrations are taken with respect to the slope $\mu$ \cite[\S1.6]{HL}. Recall the excess $I(X,H)=(K_X^2-2c_2(X))\cdot H^{m-2}-(m+1)H^m$ (Lemma~\ref{lem:excess}). By Propositions~\ref{prop:lopez} and~\ref{prop:acm-highdim}, Ulrich bundles (for $H$ very ample) and aCM bundles can exist only if $I(X,H)\le0$.

\begin{theorem}\label{thm:local-m}
If $\Rh(X,H)$ has a small Cohen--Macaulay module, then
\[
\bigl(K_X^2-2c_2(X)\bigr)\cdot H^{m-2}\ \le\ (m+4)\,H^m,\qquad\text{equivalently}\qquad I(X,H)\le3H^m .
\]
\end{theorem}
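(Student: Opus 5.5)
The plan is to run the argument of Section~\ref{sec:synthesis}(iii) on $\PP^m$ instead of $\PP^2$. I will measure variances with $\ch_2\cdot h^{m-2}$ and $c_1\cdot h^{m-1}$ on $\PP^m$, and $c_2\cdot H^{m-2}$, $c_1^2\cdot H^{m-2}$ on $X$.

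\emph{Setup.} Choose $m+1$ general sections of $L=\OO_X(H)$. They give a finite flat $f\colon X\to\PP^m$ with $f^*\OO(1)=L$, and a finite map $A=\C[[x_0,\dots,x_m]]\to\Rh$. As in Proposition~\ref{prop:model}, base change of $\Tot(L^{-1})\to\Spec R$ gives the regular scheme $W$ with zero section $X$ and conormal bundle $L$. It also gives $Z=\Bl_0\Spec A$ with exceptional divisor $D\iso\PP^m$ and $\cI_D|_D\iso\OO_D(1)$, and a finite $g\colon W\to Z$ with $g^{-1}(D)=X$ scheme-theoretically and $g|_X=f$.

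Let $M$ be a small CM $\Rh$-module. By Lemma~\ref{lem:reflexive} it is reflexive and locally free on $U$. By Auslander--Buchsbaum it is free over $A$, say of rank $N$. Extend $\widetilde M|_U$ to a torsion-free $\cF$ on $W$ with torsion-free restriction. Applying the modifications of Lemma~\ref{lem:elem} along the last Harder--Narasimhan quotient, I arrange, as in Lemma~\ref{lem:normalize}(a), that all normalized slopes $\mu$ of the HN factors $G_i$ of $P=\cF|_X$ lie in an interval of length $\le1$. This needs a short check in dimension $m$. Each modification replaces the quotient $\cQ$ by $\cQ(H)$, which raises $\mu$ by exactly $1$. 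A discrete invariant, such as the sum of ranks weighted by $\lfloor\mu_{\max}-\mu\rfloor$, then decreases. Boundedness of HN slopes of $\cF|_X$ above, since they are controlled by $\cF$, makes the process terminate.

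Then $\cL'=g_*\cF$ is torsion-free on $Z$, and trivial off $D$ because it agrees there with $\widetilde M\iso\OO^N$ pulled back from $\Spec A$. Since $g$ is finite and $X=g^*D$, its restriction is $\cL'|_D=f_*P$.

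\emph{Lower bound.} Define $v(F)=2\ch_2(F)h^{m-2}/N-(c_1(F)h^{m-1}/N)^2$ on $\PP^m$. I need Lemma~\ref{lem:variance} in dimension $m$, that is, $v(\cL'|_D)\ge0$. The first approach is to reduce to $m=2$: take a general $3$-dimensional linear subspace $\Lambda\subset\Spec A$ through $0$. Its strict transform is $\Bl_0\Lambda$, with exceptional divisor a general plane $\PP^2\subset D$. The restriction of $\cL'$ stays torsion-free (by generality), remains trivial off the plane, and has the same $v$, since $v$ only sees $h^{m-2}\cdot\ch_2$ and $h^{m-1}\cdot c_1$. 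If this restriction argument is delicate, the fallback is to redo the proof of \cite[Lemma~4.1]{OAI} directly. There $\cL'$ is compared with the trivial sheaf through the $D$-adic filtration, and the "elementary transformation" sequences along $D$ change $\ch_2$ in a way whose net contribution is a sum of squares.

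\emph{Upper bound.} I compare Hilbert polynomials $\chi(f_*G(t))=\chi(G(tH))$ for a torsion-free semistable $G$ of rank $u$, reading off the coefficients of $t^m,t^{m-1},t^{m-2}$. Hirzebruch--Riemann--Roch on $X$ and on $\PP^m$, with $\mathrm{td}_2=(c_1^2+c_2)/12$, should give
\[
v(f_*G)=\frac{(m+1)d-(K_X^2-2c_2(X))H^{m-2}}{12d}-\frac{(\delta(G)-\alpha(G)^2)\cdot H^{m-2}}{d},
\]
which for $m=2$ reduces to Proposition~\ref{prop:synthesis}. Here $\alpha$ is the $H$-primitive part of $\nu-K_X/2$. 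The last term is $\le0$. For $\delta\cdot H^{m-2}\ge0$ I use Bogomolov on a general complete-intersection surface together with Mehta--Ramanathan restriction of semistability. For $\alpha^2H^{m-2}\le0$ I use Hodge index on that surface. The decomposition $v(f_*P)=\sum_iw_iv(f_*G_i)+v_b$ carries over verbatim, and $v_b\le\frac14$ because the $b_i$ are $\mu(G_i)$ shifted by a common constant.

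\emph{Conclusion and main obstacle.} Combining the two bounds gives $0\le\frac{(m+1)d-S}{12d}+\frac{3d}{12d}$, where $S=(K_X^2-2c_2(X))H^{m-2}$, i.e.\ $S\le(m+4)d$. The main obstacle is the local input: the higher-dimensional variance lemma, together with the compatibility of $g_*$ with restriction to $D$ (torsion-freeness and base change along the Cartier divisor). The Riemann--Roch bookkeeping is routine.
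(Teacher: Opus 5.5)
Your proposal is correct and follows the paper's architecture: freeness over $A$, extension, modifications, $g_*$ and restriction to $D$, then the variance lower bound against the Riemann--Roch upper bound with $v_b\le\frac14$. Your formula for $v(f_*G)$ is \eqref{eq:rr-var}, since $-\alpha^2\cdot H^{m-2}=\theta$.

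The genuine difference is the variance lemma. The paper proves Lemma~\ref{lem:variance-m} directly in dimension $m$, which is your fallback. It clears denominators to get $\cI_D^c\OO_Z^N\subseteq\cL'\subseteq\OO_Z^N$, then uses the $D$-adic graded pieces $\cQ_j$ with $\deg\cQ_j\ge jn_j$ and the derived restriction.

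Your primary route instead slices by the strict transform $\Bl_0\Lambda$ and quotes the case $m=2$. That works, for two reasons:
\begin{enumerate}[label=(\roman*)]
\item A torsion-free $\cL'$ on the regular scheme $Z$ has depth $1$ only at the associated points of $(\cL')^{\vee\vee}/\cL'$. These are finitely many, of codimension $\ge2$, and a general strict transform avoids them, so the restriction stays torsion-free.
\item A general plane in $D$ is $\Tor$-independent of $\cL'|_D$, so $v$ is unchanged.
\end{enumerate}
The cost is a Bertini argument on a scheme not of finite type over $\C$. The slice also presupposes the surface lemma of \cite{OAI}. The direct proof uses no generality and covers $m=2$ too, which is how the paper supplies the details of Theorem~\ref{thm:oai2}.

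Your Bogomolov step, via Mehta--Ramanathan on a complete-intersection surface of members of $|aH|$ with $a\gg0$, is just the standard proof of \cite[Thm.~7.3.1]{HL}, which the paper cites directly.

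One soft spot is the termination of the modifications. Your invariant is not shown to decrease, because the Harder--Narasimhan filtration of the new restriction $P'$ need not be related to that of $P$. Nor is ``controlled by $\cF$'' a uniform bound, since $\cF$ changes at each step. Use the paper's argument instead. The new restriction $P'$ is an extension of $B$ by the semistable sheaf $G(H)$ of slope $\mu_{\min}(P)+1<\mu_{\max}(P)$, so $\mu_{\max}(P')\le\mu_{\max}(P)$. Meanwhile $\mu(P')=\mu(P)+\rk G/\rk P$ increases by at least $1/r$ at each step, so the process stops.
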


For $m=2$ one has $K^2-2c_2=3\sigma$, and this is Theorem~\ref{thm:oai2}. The proof follows the argument of \cite{OAI} (Sections~\ref{sec:oai} and~\ref{sec:synthesis}) step by step. Several steps of \cite{OAI} are written there for surfaces but do not depend on the dimension: the model (Proposition~\ref{prop:model-m}, extending \cite[Prop.~2.1]{Ch} and \cite[\S2]{OAI}), the variance lemma (Lemma~\ref{lem:variance-m}, extending \cite[Lemma~4.1]{OAI}) and the modifications (Step~3, extending \cite[Lemma~3.1]{OAI}). The dimension enters through the Riemann--Roch computation of Proposition~\ref{prop:rr-m}, in which the Todd class of $\PP^m$ produces the constant $\frac{m+1}{12}$ in place of $\frac14$; for a trivial push-forward it reduces to the identity in the proof of \cite[Lemma~3.1]{Lo} (Remark~\ref{rem:rr-checks}(iii)).

\subsection{The model}

\begin{proposition}[cf.\ {\cite[Prop.~2.1]{Ch}}, {\cite[\S2]{OAI}} for $m=2$]\label{prop:model-m}
Let $s_0,\dots,s_m\in H^0(X,\OO_X(H))$ be general sections, $f\colon X\to\PP^m$ the morphism they define, $S=\C[x_0,\dots,x_m]\to R$ the homomorphism $x_i\mapsto s_i$, and $A=\C[[x_0,\dots,x_m]]$.
\begin{enumerate}[label=(\alph*)]
	\item $f$ is finite and surjective of degree $d$, and $f^*\OO(1)=\OO_X(H)$.
	\item $R$ is a normal domain, finite over $S$ of generic rank $d$. Its completion is $\Rh=R\otimes_SA$, a complete normal local domain of dimension $m+1$ with residue field $\C$, finite over $A$ of generic rank $d$.
	\item Let $Z=\Bl_0\Spec A$, with exceptional divisor $D\iso\PP^m$, so that $\cI_D|_D\iso\OO_D(1)$. There are a regular integral scheme $W$, a finite morphism $g\colon W\to Z$ and a proper morphism $p\colon W\to\Spec\Rh$ with the following properties. The scheme-theoretic inverse image $g^{-1}(D)$ is isomorphic to $X$, and under this identification $X\subset W$ is an effective Cartier divisor, $g|_X=f$ and $\cI_X|_X\iso\OO_X(H)$. Finally, $p$ induces an isomorphism $W\smallsetminus X\iso\Spec\Rh\smallsetminus\{\widehat\m\}$ lying over the isomorphism $Z\smallsetminus D\iso\Spec A\smallsetminus\{\m_A\}$. In particular $\Rh$ is regular away from the vertex: $\Rh_{\mathfrak p}$ is regular for every prime $\mathfrak p\ne\widehat\m$.
\end{enumerate}
\end{proposition}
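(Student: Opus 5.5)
For (a) I would use Bertini-type dimension counting. Since $|H|$ is base-point-free, $m+1$ general sections have no common zero: intersecting $X$ successively with general members of $|H|$ drops the dimension by one each time, so after $m+1$ steps the common zero locus is empty. Hence $f$ is a morphism with $f^*\OO(1)=\OO_X(H)$. Because $H$ is ample, $f$ contracts no curve, so $f$ is quasi-finite and projective, hence finite. It is surjective for dimension reasons, and its degree is $(f^*h)^m=H^m=d$.

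For (b) I would identify $R$ with $\Gamma(\PP^m,\bigoplus_t f_*\OO_X(t))$, which is a finite graded $S$-module because $f_*\OO_X$ is coherent. This gives finiteness over $S$, with generic rank $d$. Normality of $R$ follows because $X$ is normal and $R=\bigoplus_t H^0(X,\OO_X(tH))$ is the full section ring of an ample divisor; this is the argument of \cite[Lemma~1.2]{An2}, which does not depend on dimension. Since $R$ is finite over $S$, the $\m$-adic and $\m_S$-adic topologies on $R$ agree, so $\Rh=R\otimes_SA$. Normality and the domain property pass to the completion by excellence, using \cite[\S32]{Mat} together with analytic irreducibility of a normal excellent local ring. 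The dimension of $\Rh$ is $m+1$, its residue field is $R_0=\C$, and its generic rank over $A$ is $d$.

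For (c) I would set $W_0=\Tot(\OO_X(-H))$, i.e.\ $\mathbf{Spec}_X\bigoplus_{t\ge0}\OO_X(tH)$, and $Z_0=\Tot(\OO_{\PP^m}(-1))=\Bl_0\A^{m+1}$. The map $f$ induces a morphism $g_0\colon W_0\to Z_0$ given by $\bigoplus f_*\OO_X(t)$ over $\bigoplus\OO(t)$. This map is finite because the pullback of the tautological line bundle is $\OO_X(-H)$, and it sends the zero section $X$ to the zero section $D$. The affinization $W_0\to\Spec R$ contracts $X$ to the vertex and is an isomorphism elsewhere, since the punctured cone is the $\mathbb{G}_m$-bundle. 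Now base change along $\Spec A\to\Spec S$: put $Z=Z_0\times_{\A^{m+1}}\Spec A$, which is $\Bl_0\Spec A$, and $W=W_0\times_{\Spec R}\Spec\Rh$. Then $g^{-1}(D)=X$ scheme-theoretically, because $g_0^*\cI_D=\cI_X$: the ideal of the zero section is generated by the fibre coordinate, which pulls back to the fibre coordinate. The conormal bundle is $\OO_X(H)$. Regularity of $W$ follows from regularity of $W_0$, a line bundle over a smooth variety, together with regularity of the formal fibres of the excellent ring $R$ along the base change. The isomorphism $W\smallsetminus X\iso\Spec\Rh\smallsetminus\{\widehat\m\}$ is the base change of the punctured-cone isomorphism, and it lies compatibly over $Z\smallsetminus D$. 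This gives regularity of $\Rh$ off the vertex.

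I expect the main obstacle to be the bookkeeping in (c). One has to check that base change commutes with the identifications: that $W$ is integral, and that the closed fibre over $\widehat\m$ is exactly $X$ rather than a thickening. I would verify this by working over the formal neighbourhood, where the preimage of $\widehat\m$ in $W_0$ is the zero section as a set and the scheme structure is fixed by $\cI_X=g^*\cI_D$.
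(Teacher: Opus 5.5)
Your construction is the paper's. Once $\Rh=R\otimes_SA$, your $W=\Tot(\OO_X(-H))\times_{\Spec R}\Spec\Rh$ is the paper's $T\times_{\A^{m+1}}\Spec A$. Likewise $Z$, $g$ (the base change of $f$, via $\Tot(\OO_X(-H))=X\times_{\PP^m}\Tot(\OO_{\PP^m}(-1))$), $p$, and the identification $g^{-1}(D)\iso X$ through $\cI_X=g^*\cI_D$ are the same.

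The one real difference is the regularity of $W$.
\begin{itemize}
\item You pull regularity back from $W_0$ along the regular morphism $R\to\Rh$, which is flat with geometrically regular fibres by excellence. This gives regularity at every point at once.
\item The paper checks only closed points $x\in X$, where $\OO_{T,x'}\to\OO_{W,x}$ is flat with closed fibre $\C$. It reaches all closed points using properness of $W$ over $\Spec A$, and all other points by localization.
\end{itemize}
Both arguments are fine. Yours needs the G-ring property of $R$; the paper's needs only the closed-fibre computation.

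Two assertions of (c) are not established in your proposal.
\begin{itemize}
\item \emph{Properness of $p$}, which you never mention. Note that $W_0\to\A^{m+1}$ is a finite map followed by a blow-up, hence proper. So $W\to\Spec A$ is proper, and $p$ is proper because $\Spec\Rh\to\Spec A$ is separated.
\item \emph{Integrality of $W$.} You list it as an obstacle, but your last paragraph only treats the closed fibre. You can argue as the paper does: each connected component of $W$ is proper over $\Spec A$, so it meets the connected closed fibre $X$; a connected regular Noetherian scheme is integral. Alternatively, use what you already have. The local equation of $X$ is a nonzerodivisor, by flat base change from $W_0$, so $X$ contains no generic point of a component of $W$. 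Hence every component meets $W\smallsetminus X\iso\Spec\Rh\smallsetminus\{\widehat\m\}$, which is irreducible because $\Rh$ is a domain. So $W$ is irreducible, and it is reduced since it is regular.
\end{itemize}
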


\begin{proof}
(a) Since $|H|$ is base-point-free, $m+1$ general sections have no common zero. As $f^*\OO(1)=\OO_X(H)$ is ample, $f$ contracts no curve; a quasi-finite projective morphism is finite, and a finite morphism between varieties of the same dimension is surjective. Its degree is $\int_Xf^*h^m=H^m$, where $h$ is the hyperplane class.

(b) Let $L=\OO_X(H)$, $T=\operatorname{Spec}_X\bigoplus_{t\ge0}L^t$ the total space of $L^{-1}$, $X_0\subset T$ its zero section, and $T^\circ=T\smallsetminus X_0=\operatorname{Spec}_X\bigoplus_{t\in\Z}L^t$. For a homogeneous $s\in R_k$, $k\ge1$, the open set $X_s=X_{s^j}$ is affine, being the complement of a hyperplane section for an embedding by a very ample power $L^{kj}$, and $\Gamma(X_s,L^t)=(R_s)_t$ \cite[Lemma~II.5.14]{Har}; hence the preimage of $D(s)\subset\Spec R$ in $T$ is $\Spec R_s$. These sets cover $T^\circ$ because $L$ is ample, and $X_0$ maps to the vertex, so $T^\circ\iso\Spec R\smallsetminus V(\m)$. Since $H^0(X,\OO_X(tH))=0$ for $t<0$, $R=\Gamma(T^\circ,\OO)$ is the ring of functions of a smooth connected scheme, hence a normal domain. By the projection formula $R=\bigoplus_{t\ge0}H^0(\PP^m,(f_*\OO_X)(t))$, a finitely generated graded $S$-module \cite[Ex.~II.5.9]{Har} of rank $\rk f_*\OO_X=d$. Since $R/(x_0,\dots,x_m)R$ is finite-dimensional, the ideal $(x_0,\dots,x_m)R$ is $\m$-primary, and $\Rh=R\otimes_SA$. As $R_\m$ is excellent and normal, $\Rh$ is normal \cite[\S32]{Mat}, hence a domain, of dimension $m+1$.

(c) Since $\bigoplus_tL^t=f^*\bigoplus_t\OO(t)$, we have $T=X\times_{\PP^m}T_0$ with $T_0=\Tot(\OO_{\PP^m}(-1))=\Bl_0\A^{m+1}$. Put $W=T\times_{\A^{m+1}}\Spec A$ and $Z=T_0\times_{\A^{m+1}}\Spec A=\Bl_0\Spec A$ (blowing up commutes with flat base change). Let $g\colon W\to Z$ be the base change of the finite morphism $T\to T_0$ and $p\colon W\to\Spec(R\otimes_SA)=\Spec\Rh$ that of $T\to\Spec R$. The morphism $T\to\A^{m+1}$ is proper (finite followed by a blow-up), hence so is $W\to\Spec A$; since $\Spec\Rh\to\Spec A$ is finite, hence separated, $p$ is proper as well. Since $\m_A\OO_Z=\cI_D$, the scheme $g^{-1}(D)$ is the fibre of $W$ over the closed point, which equals the fibre of $T$ over $0$, namely $X\times_{\PP^m}D_0=X_0\iso X$, where $D_0\iso\PP^m$ is the zero section of $T_0$. Thus $\cI_X=\cI_D\OO_W$ is locally principal. Its local generators are nonzerodivisors and $W$ is regular along $X$: for a closed point $x\in X\subset W$ with image $x'\in X_0\subset T$, the local homomorphism $\OO_{T,x'}\to\OO_{W,x}$ is flat with closed fibre $\kappa(x')\otimes_SA=\kappa(x')\otimes_\C A/(x_0,\dots,x_m)A=\C$, so $\OO_{W,x}$ is regular of dimension $m+1$ \cite[Thm.~23.7]{Mat} and flatness preserves the nonzerodivisor. Then $\cI_X/\cI_X^2=g^*(\cI_D/\cI_D^2)$, i.e.\ $\cI_X|_X\iso f^*\OO_D(1)=\OO_X(H)$. Since $W$ is proper over $\Spec A$, every closed point of $W$ lies on $X$, so $\OO_{W,x}$ is regular at every closed point $x$. Every point $y$ of the Noetherian scheme $W$ specializes to a closed point $x$, and $\OO_{W,y}$ is a localization of $\OO_{W,x}$, hence regular \cite[Thm.~19.3]{Mat}; so $W$ is regular. Every connected component of $W$ is proper over $\Spec A$, so it contains a closed point and meets the connected $X$; hence $W$ is connected and, being regular, integral. Finally $W\smallsetminus X$ is the base change of $T^\circ\iso\Spec R\smallsetminus V(\m)$ along $\Spec\Rh\to\Spec R$, and the preimage of $V(\m)$ in $\Spec\Rh$ is $\{\widehat\m\}$. The same argument applied to $T_0$ gives $Z\smallsetminus D\iso\Spec A\smallsetminus\{\m_A\}$, compatibly with $g$.
\end{proof}

For $m=2$ this is Proposition~\ref{prop:model} together with the morphisms $f$ and $g$ of Section~\ref{sec:local}, with $X$ here in the role of $Y$ there.

\subsection{The variance lemma on $\PP^m$}
In the rest of this section $h$ denotes the hyperplane class of $\PP^m$. For a coherent sheaf $F$ of rank $N>0$ on $\PP^m$ put
\[
\deg F=c_1(F)\cdot h^{m-1},\qquad v(F)=\frac{2\ch_2(F)\cdot h^{m-2}}{N}-\Bigl(\frac{\deg F}{N}\Bigr)^2 .
\]
For $m=2$ this is the $v$ of Section~\ref{sec:oai}. As there, $v$ is invariant under twists, and $v\bigl(\bigoplus_j\OO(a_j)\bigr)$ is the variance of $(a_j)$.

\begin{lemma}[cf.\ {\cite[Lemma~4.1]{OAI}}]\label{lem:variance-m}
Let $Z$ be an integral Noetherian scheme and $D\subset Z$ an effective Cartier divisor with $D\iso\PP^m$ and $\cI_D|_D\iso\OO_D(1)$. If $\cL'$ is a torsion-free coherent sheaf on $Z$ with $\cL'|_{Z\smallsetminus D}\iso\OO_{Z\smallsetminus D}^N$, then $v(\cL'|_D)\ge0$.
\end{lemma}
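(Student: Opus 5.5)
The plan is to compare the sheaf $\cL'$ with the trivial sheaf $\OO_Z^N$ along the thickenings of $D$, and read off $v(\cL'|_D)$ from the first three coefficients of a polynomial counting function. Put $V=\cL'|_D$ and $\cI=\cI_D$. Since $\cL'$ is torsion-free and $\cI$ is locally generated by a nonzerodivisor, multiplication by a local generator of $\cI^i$ gives isomorphisms
\[
\cI^i\cL'/\cI^{i+1}\cL'\iso V\otimes\OO_D(i),
\]
which are compatible on overlaps, using $\cI|_D\iso\OO_D(1)$. Hence the thickenings $\cL'/\cI^j\cL'$ are filtered by the twists $V(i)$, $0\le i<j$, and
\[
\chi(\cL'/\cI^j\cL')=\sum_{i=0}^{j-1}\chi(V(i)).
\]
The right-hand side is a polynomial in $j$ of degree $m+1$. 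By Riemann--Roch on $\PP^m$ its top three coefficients are determined by $N$, $\deg V$ and $\ch_2(V)\cdot h^{m-2}$. After twisting $\cL'$ by a power of $\cI$ we may assume $\deg V/N\in[0,1)$; by twist invariance this does not change $v(V)$.

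Next I would exploit the triviality off $D$. Since $\cL'$ and $\OO_Z^N$ agree on $Z\smallsetminus D$ and $Z$ is Noetherian, there is $k\ge0$ with
\[
\cI^{k}\OO_Z^N\subset\cL'\subset\cI^{-k}\OO_Z^N
\]
inside the generic fibre $\OO_{Z,\eta}^N$. Multiplying by $\cI^j$ gives the same sandwich for $\cI^j\cL'$, uniformly in $j$: the sheaf $\cI^j\cL'$ is squeezed between $\cI^{j+k}\OO^N$ and $\cI^{j-k}\OO^N$. To turn these inclusions into numerical inequalities I would pass from $\chi$ to lengths. I would take $H^0$ over the formal completion of $Z$ along $D$, or equivalently work with the finitely generated torsion-free module $\Gamma=H^0(Z,\cL')$ over the local ring below $Z$. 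For $i\gg0$ Serre vanishing gives $h^0(V(i))=\chi(V(i))$, so up to a polynomial of degree at most $m$ that does not affect the relevant comparison, the function $j\mapsto\sum_{i<j}\chi(V(i))$ becomes a Hilbert-type length function. That length function is monotone under the inclusions above.

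The comparison then has three layers. The $j^{m+1}$ coefficients agree, since both sides have rank $N$. The $j^m$ coefficients are controlled by the shifts $\pm k$ and simply fix the centring of the comparison. The inequality must come from the $j^{m-1}$ coefficient. Here I would compare $V$ with the split sheaf $\bigoplus\OO(a_j)$ having the same rank and degree, built from $\OO$ and $\OO(1)$ only, for which the variance equals $\{\deg V/N\}(1-\{\deg V/N\})$. The claim to aim for is that a bounded discrepancy between the length function of $\cL'$ and that of a trivial sheaf, uniform in $j$, forces
\[
\ch_2(V)\cdot h^{m-2}\ \ge\ \frac{(\deg V)^2}{2N},
\]
which is exactly $v(V)\ge0$.

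I expect this last step to be the main obstacle. Bounded uniform shifts only control the difference of the two polynomials up to terms of lower order, so the $j^{m-1}$ coefficient does not follow from naive monotonicity. My first attempt would be to refine the sandwich to a chain of elementary modifications
\[
\cL'=\cL_0\subset\cL_1\subset\dots\subset\cL_r=\cI^{-k}\OO^N,
\]
each with cokernel a pushed-forward torsion-free quotient of the restriction to $D$, as in Lemma~\ref{lem:elem}. Along such a chain $v$ changes by explicit quadratic expressions, and one would then run a convexity argument, in the spirit of the majorization Lemma~\ref{lem:maj}, showing that $v$ of the restriction can only decrease toward the trivial value $0$.
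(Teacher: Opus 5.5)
Your proposal stops exactly where the content of the lemma lies, and neither of your two routes closes the gap.

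\textbf{The length route.} The thickening identity $\chi(\cL'/\cI^j\cL')=\sum_{i<j}\chi(V(i))$ is fine, but you then compare only by inclusions. A sandwich $\cI^{j+k}\OO_Z^N\subset\cI^j\cL'\subset\cI^{j-k}\OO_Z^N$ with $k$ fixed moves any length function by $O(j^m)$, so monotonicity determines only the $j^{m+1}$ coefficient, i.e.\ the rank. The $j^m$ coefficient (carrying $\deg V$) and the $j^{m-1}$ coefficient (carrying $\ch_2(V)\cdot h^{m-2}$) remain free, so no inequality between them can come out. An exact comparison of Euler characteristics is needed. Also, the lemma is stated for an arbitrary integral Noetherian $Z$, which has no local ring below it.

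\textbf{The chain route.} You do not say which chain you use, and for a general chain the claimed monotonicity is false. For $0<n<N$, the chain $\cI\OO_Z^N\subset\OO_Z^{N-n}\oplus\cI^{\oplus n}\subset\OO_Z^N$ is of the kind you describe. Its cokernels $\OO_D^{N-n}$ and $\OO_D^{n}$ are torsion-free quotients of the relevant restrictions. Yet the restrictions have $v=0$, $\frac nN\bigl(1-\frac nN\bigr)$, $0$, so $v$ goes up and then down. A majorization-type convexity argument is not the missing input either. What is missing is a positivity statement about the individual steps, and your sketch does not identify one.

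\textbf{What works.} Measure every step against the fixed trivial lattice. After your normalization, take $\cL=\OO_Z^N$ with $\cI^c\cL\subseteq\cL'\subseteq\cL$, and use the $\cI$-adic chain $\cL_t=\cL'+\cI^t\cL$ for $0\le t\le c$, so $\cL_0=\cL$ and $\cL_c=\cL'$.
\begin{enumerate}
\item The pieces $\cQ_t=\cL_t/\cL_{t+1}$ are quotients of $\cI^t\cL/\cI^{t+1}\cL\iso\OO_D(t)^N$. Hence $n_t=\rk\cQ_t\le N$, and $\deg\cQ_t\ge tn_t$ because $\cQ_t(-t)$ is globally generated.
\item Drop the torsion-freeness requirement. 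The pieces need not be torsion-free: for $\cL'=\cI_p\subset\OO_Z$ with $p\in D$ a closed point, the only piece is $\OO_p$. The restriction $\cL'|_D$ itself may have torsion too.
\item Use derived restriction instead. Since $\Tor_1^{\OO_Z}(\cQ_t,\OO_D)\iso\cQ_t(1)$, one gets $[\cL_{t+1}|_D]=[\cL_t|_D]+[\cQ_t(1)]-[\cQ_t]$ in $K(\PP^m)$. Hence $\deg(\cL_t|_D)=\sum_{s<t}n_s$.
\item Passing from $\cL_t$ to $\cL_{t+1}$ changes $N^2v$ by
\[
N(2\deg\cQ_t+n_t)-2n_t\sum_{s<t}n_s-n_t^2\ \ge\ n_t(N-n_t)+2n_t\sum_{s<t}(N-n_s)\ \ge\ 0 .
\]
\end{enumerate}
So along this particular chain $v$ does decrease to $v(\OO_D^N)=0$ as you move up from $\cL'$ to $\cL$, as you hoped. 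Summing the steps gives $N^2v(\cL'|_D)\ge\sum_jn_j(N-n_j)+2\sum_{j>\ell}n_j(N-n_\ell)\ge0$. This is the paper's proof, which performs all these steps at once via the class identity $[\cL'|_D]=N[\OO_D]+\sum_j\bigl([\cQ_j(1)]-[\cQ_j]\bigr)$.
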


\begin{proof}
\emph{Clearing denominators.} Write $\cI=\cI_D$, $\cL=\OO_Z^N$ and $j\colon Z\smallsetminus D\hookrightarrow Z$. A torsion-free sheaf on the integral scheme $Z$ has no nonzero local sections supported on $D$, so $\cL'$ and $\cL$ both embed in $j_*j^*\cL'=j_*j^*\cL$. This sheaf is the increasing union of the coherent subsheaves $\cI^{-q}\cL$, and also of the $\cI^{-q}\cL'$, $q\ge0$: on an affine open set where $D$ is defined by $\tau$, a section of $j_*j^*\cL$ becomes a section of $\cL$ after multiplication by a power of $\tau$. By Noetherianity, $\cI^q\cL\subseteq\cL'\subseteq\cI^{-q}\cL$ for some $q$. Replacing $\cL'$ by $\cI^q\cL'$ replaces $\cL'|_D$ by its twist $\cL'|_D(q)$ and does not change $v$. So we may assume $\cI^c\cL\subseteq\cL'\subseteq\cL$ with $c=2q$.

\emph{The graded pieces.} Let $\cQ=\cL/\cL'$ and $\cQ_j=\cI^j\cQ/\cI^{j+1}\cQ$ for $0\le j<c$. These are coherent $\OO_D$-modules and quotients of $\cI^j\cL/\cI^{j+1}\cL\iso\OO_D(j)^N$. Put $n_j=\rk\cQ_j\le N$. We claim $\deg\cQ_j\ge jn_j$. Indeed $\cQ_j(-j)$ is globally generated, and a globally generated coherent sheaf $G$ on $\PP^m$ has $\deg G\ge0$. To see this, note that its torsion subsheaf contributes an effective divisor class to $c_1$. If $G/G_{\rm tors}$ has rank $n>0$, the surjection from a trivial sheaf induces a nonzero map $\wedge^n\OO^{\oplus M}\to\det(G/G_{\rm tors})$, so the determinant has a nonzero section.

\emph{Derived restriction.} Since $0\to\cI\to\OO_Z\to\OO_D\to0$ is a locally free resolution, $\Tor_i^{\OO_Z}(-,\OO_D)=0$ for $i\ge2$, and $[\cF]_D:=[\cF\otimes\OO_D]-[\Tor_1^{\OO_Z}(\cF,\OO_D)]\in K(D)$ is additive on short exact sequences. For $\cF$ torsion-free a local equation of $D$ is $\cF$-regular, so $\Tor_1=0$. For an $\OO_D$-module $G$ the same resolution gives $\Tor_1(G,\OO_D)\iso G\otimes\cI|_D=G(1)$. Applying this to $0\to\cL'\to\cL\to\cQ\to0$ and to the filtration of $\cQ$,
\[
[\cL'|_D]=[\OO_D^N]+\sum_j\bigl([\cQ_j(1)]-[\cQ_j]\bigr)\qquad\text{in }K(\PP^m).
\]
Since $\ch(\cQ_j(1))-\ch(\cQ_j)=\ch(\cQ_j)\,(e^h-1)=n_jh+\bigl(c_1(\cQ_j)\cdot h+\tfrac{n_j}{2}h^2\bigr)+\dots$, the sheaf $F=\cL'|_D$ has rank $N$, $\deg F=\sum_jn_j$ and
\[
2\ch_2(F)\cdot h^{m-2}=\sum_j\bigl(2\deg\cQ_j+n_j\bigr)\ \ge\ \sum_j(2j+1)\,n_j .
\]
\emph{Conclusion.} Therefore
\[
N^2\,v(F)\ \ge\ N\sum_j(2j+1)n_j-\Bigl(\sum_jn_j\Bigr)^2=\sum_jn_j(N-n_j)+2\sum_{j>\ell}n_j(N-n_\ell)\ \ge\ 0 .\qedhere
\]
\end{proof}

For $m=2$ this is Lemma~\ref{lem:variance}; the proof uses the dimension of $D$ only through the definition of $v$.

\subsection{Riemann--Roch for push-forwards}

\begin{proposition}\label{prop:rr-m}
Let $f\colon X\to\PP^m$ be finite with $f^*\OO(1)=\OO_X(H)$, and let $\cG$ be a torsion-free sheaf of rank $u$ on $X$. Put $\beta=\nu(\cG)-\frac12K_X$ and
\[
\theta(\cG)=\frac{(\beta\cdot H^{m-1})^2-(\beta^2\cdot H^{m-2})\,d}{d}.
\]
Then $f_*\cG$ has rank $ud$, and
\begin{align}
b(\cG):=\frac{\deg f_*\cG}{ud}&=\mu(\cG)-\frac\lambda2-\frac{m+1}{2},\label{eq:rr-slope}\\
v(f_*\cG)&=\frac{m+1}{12}-\frac{(K_X^2-2c_2(X))\cdot H^{m-2}}{12\,d}-\frac{\delta(\cG)\cdot H^{m-2}+\theta(\cG)}{d}.\label{eq:rr-var}
\end{align}
Moreover $\theta(\cG)\ge0$, and if $\cG$ is $\mu$-semistable then $\delta(\cG)\cdot H^{m-2}\ge0$, so that
\[
v(f_*\cG)\ \le\ \frac{m+1}{12}-\frac{(K_X^2-2c_2(X))\cdot H^{m-2}}{12\,d}.
\]
\end{proposition}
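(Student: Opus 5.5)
The plan is to compare Hilbert polynomials on both sides of the finite map $f$, as in the proof of Proposition~\ref{prop:synthesis}, but now reading off the top three coefficients in $t$ instead of using the explicit two-variable Riemann--Roch. Since $f$ is finite, $R^if_*=0$ for $i>0$. By the projection formula $(f_*\cG)(t)=f_*(\cG(tH))$, so
\[
\chi\bigl(\PP^m,(f_*\cG)(t)\bigr)=\chi\bigl(X,\cG(tH)\bigr)\qquad\text{for all }t\in\Z .
\]
Both sides are polynomials of degree $m$ in $t$.

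\emph{Coefficients on $\PP^m$.} For a sheaf $F$ of rank $N$ with $\deg F=D$ and $E=\ch_2(F)\cdot h^{m-2}$, Hirzebruch--Riemann--Roch with
\[
\mathrm{td}(\PP^m)=1+\tfrac{m+1}2h+\tfrac{(m+1)(3m+2)}{24}h^2+\cdots
\]
(the last coefficient being $(c_1^2+c_2)/12$) gives the following:
\begin{itemize}[leftmargin=2em]
	\item the coefficient of $t^m/m!$ is $N$;
	\item the coefficient of $t^{m-1}/(m-1)!$ is $D+N\frac{m+1}2$;
	\item the coefficient of $t^{m-2}/(m-2)!$ is $E+\frac{m+1}2D+N\frac{(m+1)(3m+2)}{24}$.
\end{itemize}

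\emph{Coefficients on $X$.} Here $\ch(\cG)e^{tH}\mathrm{td}(X)$, with $\mathrm{td}(X)=1-\frac12K_X+\frac1{12}(K_X^2+c_2(X))+\cdots$, gives:
\begin{itemize}[leftmargin=2em]
	\item $u\,d$ for $t^m/m!$;
	\item $(c_1(\cG)-\frac u2K_X)\cdot H^{m-1}$ for $t^{m-1}/(m-1)!$;
	\item $\bigl(\ch_2(\cG)-\frac12c_1(\cG)K_X+\frac u{12}(K_X^2+c_2(X))\bigr)\cdot H^{m-2}$ for $t^{m-2}/(m-2)!$.
\end{itemize}

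\emph{Matching.} The leading terms give $N=ud$. The next terms give \eqref{eq:rr-slope} immediately: dividing by $ud$, we get $b+\frac{m+1}2=\beta\cdot H^{m-1}/d=\mu-\frac\lambda2$.

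For \eqref{eq:rr-var}, I would divide the $t^{m-2}$ identity by $ud$ and solve for $2E/N$. The next step is to substitute
\[
\frac{\ch_2(\cG)}u=\frac{\nu^2}2-\frac\delta2,
\]
which follows from $\ch_2=\frac12c_1^2-c_2$ and the definition of $\Delta$. The combination $\frac12\nu^2-\frac12\nu K_X$ then rewrites as $\frac12\beta^2-\frac18K_X^2$.

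Subtracting $b^2$ and using \eqref{eq:rr-slope}, the terms linear in $\beta\cdot H^{m-1}$ cancel. What remains is $\beta^2\cdot H^{m-2}/d-(\beta\cdot H^{m-1})^2/d^2=-\theta/d$, together with a constant part
\[
-\frac{K_X^2\cdot H^{m-2}}{4d}+\frac{(K_X^2+c_2)\cdot H^{m-2}}{6d}+\Bigl(\frac{(m+1)^2}4-\frac{(m+1)(3m+2)}{12}\Bigr).
\]
The last bracket equals $\frac{m+1}{12}$, and the first two terms combine to $-\frac{(K_X^2-2c_2)\cdot H^{m-2}}{12d}$. For $m=2$ this should reproduce Proposition~\ref{prop:synthesis}, which I would use as a check on signs.

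\emph{Inequalities.} The inequality $\theta\ge0$ is the Hodge index inequality $(\beta\cdot H^{m-1})^2\ge(\beta^2\cdot H^{m-2})(H^m)$. To prove it, I would restrict to a smooth surface $S$ that is a complete intersection of $m-2$ general members of $|kH|$, $k\gg0$, which exists by Bertini, and apply the usual Hodge index theorem on $S$ to $\beta|_S$ and $H|_S$; the powers of $k$ scale out.

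For $\mu$-semistable $\cG$, the inequality $\delta(\cG)\cdot H^{m-2}\ge0$ is Bogomolov's inequality in higher dimension. I would deduce it from the surface case by restricting to the same general complete-intersection surface, using the Mehta--Ramanathan restriction theorem (semistability persists for $k\gg0$, Flenner/Langer). Alternatively, Langer's version holds directly on $X$ for semistable torsion-free sheaves; see \cite{HL}.

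I expect the main obstacle to be this last step: justifying Bogomolov for merely torsion-free, not locally free, semistable sheaves. If the restriction route runs into trouble with a non-locally-free $\cG$, the fallback is to cite Langer's Bogomolov inequality valid in any dimension. The Riemann--Roch bookkeeping is routine but sign-sensitive.
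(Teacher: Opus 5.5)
Your proposal is correct and follows the paper's proof essentially step by step. Both compare the $t^{m-1}$ and $t^{m-2}$ coefficients of $\chi\bigl((f_*\cG)(t)\bigr)=\chi(\cG(tH))$ via Hirzebruch--Riemann--Roch, use the substitutions $\ch_2(\cG)/u=\frac12(\nu^2-\delta)$ and $\nu^2-\nu\cdot K_X=\beta^2-\frac14K_X^2$, and prove $\theta\ge0$ by the Hodge index theorem on a general complete-intersection surface. For the semistable case, the paper cites Bogomolov's inequality for $\mu$-semistable torsion-free sheaves in arbitrary dimension directly as \cite[Thm.~7.3.1]{HL}, so your concern about non-locally-free $\cG$ is already covered there. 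The only detail to add is that the complete-intersection surface is connected (e.g.\ \cite[Cor.~III.7.9]{Har}), which the Hodge index inequality requires.
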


\begin{proof}
Let $F=f_*\cG$, of rank $N=ud$. Since $f$ is finite, the projection formula gives $H^i(\PP^m,F(t))=H^i(X,\cG(tH))$, so $\chi(F(t))=\chi(\cG(tH))$ as polynomials in $t$. We compare the coefficients of $t^{m-1}$ and $t^{m-2}$ given by Hirzebruch--Riemann--Roch. The Todd classes are
\begin{gather*}
\operatorname{td}(\PP^m)=\Bigl(\frac{h}{1-e^{-h}}\Bigr)^{m+1}=1+\frac{m+1}2\,h+\frac{(m+1)(3m+2)}{24}\,h^2+\cdots,\\
\operatorname{td}(X)=1-\frac{K_X}2+\frac{K_X^2+c_2(X)}{12}+\cdots,
\end{gather*}
where the $h^2$-coefficient is $\frac{m+1}{12}+\binom{m+1}2\frac14$. In $\chi=\int\ch(\,\cdot\,)\,e^{tH}\operatorname{td}$, the coefficient of $t^k/k!$ is $\int H^k\cdot[\ch(\,\cdot\,)\operatorname{td}]_{m-k}$. With $b=\deg F/N$ and $e=\ch_2(F)\cdot h^{m-2}/N$ we obtain, for $k=m-1$ and $k=m-2$,
\begin{align*}
N\Bigl(b+\frac{m+1}2\Bigr)&=c_1(\cG)\cdot H^{m-1}-\frac u2K_X\cdot H^{m-1},\\
N\Bigl(e+\frac{m+1}2\,b+\frac{(m+1)(3m+2)}{24}\Bigr)&=\Bigl(\ch_2(\cG)-\frac12c_1(\cG)\cdot K_X\Bigr)\cdot H^{m-2}\\
&\qquad+\frac u{12}\bigl(K_X^2+c_2(X)\bigr)\cdot H^{m-2}.
\end{align*}
(For $m=2$ the second line compares constant terms.) The first line is \eqref{eq:rr-slope}. In the second, write $\ch_2(\cG)/u=\frac12(\nu^2-\delta)$, which is the definition of $\Delta$, and divide by $N$:
\begin{multline*}
2e=\frac{(\nu^2-\nu\cdot K_X-\delta)\cdot H^{m-2}}{d}+\frac{(K_X^2+c_2)\cdot H^{m-2}}{6\,d}\\-(m+1)\,b-\frac{(m+1)(3m+2)}{12}.
\end{multline*}
Now $\nu^2-\nu\cdot K_X=\beta^2-\frac14K_X^2$, and by \eqref{eq:rr-slope} $b+\frac{m+1}2=\frac{\beta\cdot H^{m-1}}d$, so $b^2+(m+1)b=\bigl(\frac{\beta\cdot H^{m-1}}d\bigr)^2-\frac{(m+1)^2}4$. Hence
\begin{multline*}
v(F)=2e-b^2=\frac{\beta^2\cdot H^{m-2}}{d}-\frac{(\beta\cdot H^{m-1})^2}{d^2}-\frac{\delta\cdot H^{m-2}}d\\+\Bigl(\frac{K_X^2+c_2}{6}-\frac{K_X^2}{4}\Bigr)\frac{H^{m-2}}{d}+\frac{(m+1)^2}4-\frac{(m+1)(3m+2)}{12}.
\end{multline*}
The first two terms are $-\theta/d$, the bracket is $-\frac1{12}(K_X^2-2c_2)$, and the constants add up to $\frac{m+1}{12}$. This is \eqref{eq:rr-var}.

For $\theta\ge0$: both sides of $(\beta\cdot H^{m-1})^2\ge(\beta^2\cdot H^{m-2})\,H^m$ are homogeneous of degree $2m-2$ in $H$, so we may replace $H$ by a very ample multiple. Let $X_2$ be the intersection of $m-2$ general members of $|H|$. It is smooth by Bertini's theorem \cite[Cor.~III.10.9]{Har} and connected \cite[Cor.~III.7.9]{Har}. Restricting to $X_2$, with $\beta_2=\beta|_{X_2}$ and $H_2=H|_{X_2}$, gives $\beta\cdot H^{m-1}=\beta_2\cdot H_2$, $\beta^2\cdot H^{m-2}=\beta_2^2$ and $d=H_2^2$, and the Hodge index theorem on $X_2$ gives $(\beta_2\cdot H_2)^2\ge \beta_2^2\,H_2^2$. Finally, $\Delta(\cG)\cdot H^{m-2}\ge0$ for $\mu$-semistable torsion-free sheaves is Bogomolov's inequality \cite[Thm.~7.3.1]{HL}.
\end{proof}

\begin{remark}\label{rem:rr-checks}
For $m=2$, $\theta=-\alpha^2$ in the notation of Lemma~\ref{lem:rr}, and \eqref{eq:rr-var} is the equality of Proposition~\ref{prop:synthesis}; its inequality is Lemma~\ref{lem:oai-ss}. Three checks in dimension $m$:
\begin{enumerate}[label=(\roman*)]
	\item for $(X,H)=(\PP^m,\OO(1))$, $f=\mathrm{id}$ and $\cG=\OO$ one has $K^2-2c_2=(m+1)h^2$, and both sides of \eqref{eq:rr-var} vanish;
	\item for $(X,H)=(\PP^m,\OO(2))$, $f=(x_0^2:\dots:x_m^2)$ and $\cG=\OO$, one has $f_*\OO=\bigoplus\OO(-|\varepsilon|/2)$ over the subsets $\varepsilon\subseteq\{0,\dots,m\}$ of even cardinality, whose variance is $\frac{m+1}{16}$ for $m\ge2$. This agrees with \eqref{eq:rr-var}, where $\delta=\theta=0$, $d=2^m$ and $(K^2-2c_2)\cdot H^{m-2}=(m+1)2^{m-2}$. For $m=2$ this is \cite[Rem.~2.7]{An2}.
	\item if $f_*\cG$ is trivial, then $b(\cG)=0$, so $\beta\cdot H^{m-1}=\frac{m+1}2d$ by \eqref{eq:rr-slope}, and $\theta(\cG)=-\alpha'^2\cdot H^{m-2}/u^2$ with $\alpha'=c_1(\cG)-\frac u2\bigl(K_X+(m+1)H\bigr)$; multiplied by $u^2$, \eqref{eq:rr-var} becomes $\Delta(\cG)\cdot H^{m-2}=\alpha'^2\cdot H^{m-2}+\frac{u^2}{12}\bigl((m+1)H^2-K_X^2+2c_2(X)\bigr)\cdot H^{m-2}$, the identity in the proof of \cite[Lemma~3.1]{Lo}.
\end{enumerate}
\end{remark}

\subsection{Proof of Theorem~\ref{thm:local-m}}
Let $M$ be a small CM $\Rh$-module; we use the notation of Proposition~\ref{prop:model-m}.

\emph{Step 1: freeness.} The elements $x_0,\dots,x_m$ form a system of parameters of $\Rh$, hence an $M$-regular sequence. So $\depth_AM=m+1$, and the Auslander--Buchsbaum formula over the regular local ring $A$ gives $M\iso A^N$ for some $N\ge1$.

\emph{Step 2: extension.} Every associated prime $\mathfrak q$ of $M$ satisfies $\dim\Rh/\mathfrak q\ge\depth M=\dim\Rh$ \cite[Prop.~1.2.13]{BrHe}, so $\mathfrak q=0$: $M$ is torsion-free, of some rank $r\ge1$; since $\Rh$ has generic rank $d$ over $A$, $N=rd$. For a prime $\mathfrak p\ne\widehat\m$, $M_{\mathfrak p}$ is Cohen--Macaulay \cite[Thm.~2.1.3]{BrHe} with support $\Spec\Rh_{\mathfrak p}$, hence maximal Cohen--Macaulay over the regular local ring $\Rh_{\mathfrak p}$ (Proposition~\ref{prop:model-m}(c)), hence free. So $\widetilde M$ is locally free of rank $r$ on $U=\Spec\Rh\smallsetminus\{\widehat\m\}\iso W\smallsetminus X$. Let $\cE=(p^*\widetilde M)^{\vee\vee}$. It is torsion-free and $\cE|_{W\smallsetminus X}\iso\widetilde M|_U$. For $x\in W$ one has $\depth\cE_x\ge\min(2,\dim\OO_{W,x})$, since $\cE_x=\operatorname{Hom}(\cE^\vee_x,\OO_{W,x})$ and $\operatorname{Hom}$ into a module of depth $\ge2$ has depth $\ge2$ \cite[Ex.~1.4.19]{BrHe}. If $x\in X$ is not the generic point of $X$, then $\dim\OO_{W,x}\ge2$ and a local equation of $X$ is $\cE_x$-regular, so $\depth(\cE|_X)_x\ge1$. Hence $\cE|_X$ is torsion-free. At the generic point of $X$ the local ring of $W$ is a discrete valuation ring, so $\cE$ is free of rank $r$ there and $\cE|_X$ has rank $r$.

\emph{Step 3: modifications} (\cite[Lemma~3.1]{Ch}, \cite[Lemma~3.1]{OAI}). Let $i\colon X\hookrightarrow W$ be the inclusion, $\cF$ a torsion-free sheaf on $W$ with $\cF|_X$ torsion-free, and $\cF|_X\to\cQ$ a torsion-free quotient with kernel $\cK$. The composite $\cF\to i_*(\cF|_X)\to i_*\cQ$ is surjective; let $\cF'$ be its kernel. It is torsion-free and agrees with $\cF$ off $X$. Tensor $0\to\cF'\to\cF\to i_*\cQ\to0$ with $\OO_X$. A local equation of $X$ is a nonzerodivisor on the torsion-free sheaf $\cF$ on the integral scheme $W$, so $\Tor_1^{\OO_W}(\cF,\OO_X)=0$, while the resolution $0\to\cI_X\to\OO_W\to\OO_X\to0$ gives $\Tor_1^{\OO_W}(i_*\cQ,\OO_X)\iso\cQ\otimes\cI_X|_X=\cQ(H)$. The long exact sequence is $0\to\cQ(H)\to\cF'|_X\to\cF|_X\to\cQ\to0$, that is,
\[
0\to\cQ(H)\to\cF'|_X\to\cK\to0 .
\]
So $\cF'|_X$ is torsion-free of the same rank, and $\mu(\cQ(H))=\mu(\cQ)+1$. Now suppose $\mu_{\max}(P)-\mu_{\min}(P)>1$ for $P=\cF|_X$, and modify along the last Harder--Narasimhan quotient $G$ of $P$, of slope $\mu_{\min}(P)$, with kernel $B$. The new restriction $P'$ is an extension of $B$ by the semistable sheaf $G(H)$ of slope $\mu_{\min}(P)+1<\mu_{\max}(P)$, so $\mu_{\max}(P')\le\mu_{\max}(P)$, while $\mu(P')=\mu(P)+\rk G/\rk P\ge\mu(P)+1/r$. Since $\mu(P')\le\mu_{\max}(P')$ stays bounded by the initial $\mu_{\max}$, the process stops after finitely many steps. Starting from $\cE$ of Step~2, we obtain a torsion-free $\cE$ on $W$ with $\cE|_{W\smallsetminus X}\iso\widetilde M|_U$, whose restriction $P=\cE|_X$ is torsion-free of rank $r$, with Harder--Narasimhan factors $\cG_1,\dots,\cG_s$ satisfying $\mu(\cG_1)-\mu(\cG_s)\le1$. For $m=2$ this is Lemma~\ref{lem:normalize}(a).

\emph{Step 4: $v(f_*P)\ge0$.} The sheaf $\cL'=g_*\cE$ on $Z$ is coherent, and torsion-free because $g$ is finite and dominant and $W$ is integral. Over $Z\smallsetminus D\iso\Spec A\smallsetminus\{\m_A\}$, the morphism $g$ is identified with $\Spec\Rh\smallsetminus\{\widehat\m\}\to\Spec A\smallsetminus\{\m_A\}$ (Proposition~\ref{prop:model-m}(c)), and $\cE|_{W\smallsetminus X}\iso\widetilde M|_U$ is unchanged by Step~3; so $\cL'$ is there the sheaf associated with the $A$-module $M\iso A^N$, hence trivial. Since $g$ is affine and $g^*\OO_D=\OO_X$, $\cL'|_D=g_*(\cE|_X)=f_*P$. Lemma~\ref{lem:variance-m} gives $v(f_*P)\ge0$.

\emph{Step 5: upper bound.} The functor $f_*$ is exact, so $f_*P$ has a filtration with successive quotients $f_*\cG_i$, with weights $w_i=\rk\cG_i/r$. With $b_i=b(\cG_i)$ and $\bar b=\sum_iw_ib_i=\deg f_*P/N$, additivity of rank and $\ch$ gives, as in the proof of Proposition~\ref{prop:synthesis},
\[
v(f_*P)=\sum_iw_i\,v(f_*\cG_i)+\sum_iw_i(b_i-\bar b)^2 .
\]
By \eqref{eq:rr-slope}, $b_i-\mu(\cG_i)$ does not depend on $i$, so the $b_i$ lie in an interval of length $\le1$; with $\gamma$ its midpoint, $\sum_iw_i(b_i-\bar b)^2\le\sum_iw_i(b_i-\gamma)^2\le\frac14$. By Proposition~\ref{prop:rr-m} applied to the semistable $\cG_i$, and by Step~4,
\[
0\ \le\ v(f_*P)\ \le\ \frac{m+1}{12}-\frac{(K_X^2-2c_2(X))\cdot H^{m-2}}{12\,d}+\frac14,
\]
that is, $(K_X^2-2c_2(X))\cdot H^{m-2}\le(m+4)\,d$. \qed

\medskip
If $\Rh(X,H)$ has no small CM module, then neither has $R(X,H)_\m$: by faithful flatness of completion, an $M$-regular sequence of length $m+1$ over $R_\m$ stays regular on $\widehat M$ (as in Corollary~\ref{cor:oai-affine}).

\subsection{One identity in every dimension}\label{ssec:identity}
Let $f\colon X\to\PP^m$ be finite with $f^*\OO(1)=\OO_X(H)$, and let $P$ be a torsion-free sheaf on $X$ with Harder--Narasimhan factors $\cG_i$, weights $w_i$, push-forward slopes $b_i$ with weighted mean $\bar b$, and $v_b=\sum_iw_i(b_i-\bar b)^2$. Proposition~\ref{prop:rr-m} and the decomposition of Step~5 give
\begin{equation}\label{eq:identity-m}
-\frac{I(X,H)}{12}\;=\;d\,\bigl(v(f_*P)-v_b\bigr)\;+\;\sum_iw_i\bigl(\delta(\cG_i)\cdot H^{m-2}+\theta(\cG_i)\bigr),
\end{equation}
and the last sum is $\ge0$. For $m=2$, $-I/12=(d-\sigma)/4$ and \eqref{eq:identity-m} is the identity behind Table~\ref{tab:synthesis}. For the aCM case we need majorization on $\PP^m$.

\begin{lemma}\label{lem:maj-m}
Let $V=\bigoplus_{j=1}^N\OO_{\PP^m}(a_j)$ with $a_1\ge\dots\ge a_N$, and let $0=V_0\subset V_1\subset\dots\subset V_s=V$ be a filtration by subsheaves whose successive quotients have ranks $n_i>0$ and slopes $b_1\ge\dots\ge b_s$ (slope $=\deg/\rk$). Then $\sum_i\frac{n_i}N(b_i+c)^2\le\frac1N\sum_j(a_j+c)^2$ for every $c\in\R$. In particular, with $\bar a=\frac1N\sum_ja_j$, $v_a=\frac1N\sum_j(a_j-\bar a)^2$ and $v_b=\sum_i\frac{n_i}N(b_i-\bar a)^2$, one has $v_b\le v_a$.
\end{lemma}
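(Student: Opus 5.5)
The plan is to deduce the inequality from classical majorization (Karamata's inequality) for the convex function $x\mapsto(x+c)^2$. We compare two nonincreasing vectors of length $N$. The first is $a=(a_1,\dots,a_N)$. The second is $\tilde b$, obtained by repeating each $b_i$ exactly $n_i$ times, in the given order; it is nonincreasing because $b_1\ge\dots\ge b_s$. The claim $\sum_i n_i(b_i+c)^2\le\sum_j(a_j+c)^2$ is then Karamata's inequality, provided $\tilde b$ is majorized by $a$. The statement $v_b\le v_a$ is the case $c=-\bar a$, once we note that the weighted mean of the $b_i$ equals $\bar a$, since $\sum_in_ib_i=\deg V=\sum_ja_j$.

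\emph{Step 1: degrees of subsheaves.} For a subsheaf $F\subseteq V$ of rank $k$, $0<k\le N$, I will show $\deg F\le a_1+\dots+a_k$. The map $F\to V$ is injective, hence generically injective. So it induces a nonzero map $\det F\to\wedge^kV\iso\bigoplus_{|J|=k}\OO(\sum_{j\in J}a_j)$, where $\det F$ is the reflexive rank-one sheaf $\OO(\deg F)$ on $\PP^m$. Some component $\OO(\deg F)\to\OO(\sum_{J}a_j)$ is then nonzero, so $\deg F\le\sum_{j\in J}a_j\le a_1+\dots+a_k$. Torsion in $F$ contributes nothing here, since $F\subseteq V$ is torsion-free. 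The only care needed is that $\deg F=c_1(F)\cdot h^{m-1}$ agrees with the degree of $\det F$; this holds on $\PP^m$ for any torsion-free coherent sheaf.

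\emph{Step 2: majorization at the breakpoints.} Set $k_i=n_1+\dots+n_i=\rk V_i$. By additivity of degree along the filtration, the partial sum of $\tilde b$ up to $k_i$ is $\sum_{l\le i}n_lb_l=\deg V_i$. Step 1 therefore gives $\sum_{l\le k_i}\tilde b_l\le\sum_{l\le k_i}a_l$, with equality for $i=s$, since $\deg V=\sum_ja_j$.

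\emph{Step 3: all partial sums, then Karamata.} Let $A(k)=\sum_{l\le k}a_l$ and $B(k)=\sum_{l\le k}\tilde b_l$. Since $a$ is nonincreasing, $A$ is concave in $k$. On each interval $[k_{i-1},k_i]$ the function $B$ is linear, because $\tilde b$ is constant there. A concave function that dominates a linear one at both endpoints of an interval dominates it on the whole interval, so $B(k)\le A(k)$ for every $k$, with equality at $k=N$. Thus $\tilde b\prec a$, and Karamata's inequality gives the claim for every $c\in\R$.

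The main obstacle is Step 1 in the generality of arbitrary subsheaves. One could instead first saturate the filtration, which only increases the degrees of the $V_i$, but the determinant argument avoids needing this. Steps 2 and 3 are elementary bookkeeping.
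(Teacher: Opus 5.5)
Your proposal is correct and follows the paper's proof essentially step for step. It uses the same determinant argument to bound the degree of a rank-$k$ subsheaf by $a_1+\dots+a_k$, the same majorization at the breakpoints extended to all partial sums by concavity, and then Karamata's inequality; the only difference is that the paper verifies Karamata directly by Abel summation, while you cite it.
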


\begin{proof}
\emph{Step 1.} If $\mathcal S\subset V$ is a subsheaf of rank $p$, then $\deg\mathcal S\le a_1+\dots+a_p$. Indeed $\det\mathcal S=(\wedge^p\mathcal S)^{\vee\vee}$ is invertible, and $\wedge^p\mathcal S\to\wedge^pV=\bigoplus_J\OO(a_J)$, $|J|=p$, is nonzero; so some component $\det\mathcal S\to\OO(a_J)$ is nonzero on the open set where $\mathcal S$ is locally free, whose complement has codimension $\ge2$ since $\mathcal S\subset V$ is torsion-free on the smooth variety $\PP^m$, and extends across it. Hence $\deg\mathcal S\le a_J\le a_1+\dots+a_p$.

\emph{Step 2.} Let $x_1\ge\dots\ge x_N$ be the sequence in which $b_i$ is repeated $n_i$ times, and $\Sigma_x(p)=\sum_{j\le p}x_j$, $\Sigma_a(p)=\sum_{j\le p}a_j$. By Step~1, $\Sigma_x(p)=\deg V_i\le\Sigma_a(p)$ for $p=n_1+\dots+n_i$. Between two such values $\Sigma_x$ is affine and $\Sigma_a$ is concave, so $\Sigma_x\le\Sigma_a$ everywhere, with equality at $p=N$ (Shatz \cite{Sha}).

\emph{Step 3.} Put $S_p=\Sigma_a(p)-\Sigma_x(p)\ge0$, so $S_0=S_N=0$, and $y_p=a_p+x_p+2c$, a non-increasing sequence. Abel summation gives
\[
\sum_j\bigl[(a_j+c)^2-(x_j+c)^2\bigr]=\sum_j(a_j-x_j)\,y_j=\sum_{p<N}S_p\,(y_p-y_{p+1})\ \ge\ 0,
\]
which is Karamata's inequality \cite{Kar}; compare \cite[Lemma~1.5]{La04}. For $m=2$ this is Lemma~\ref{lem:maj}.
\end{proof}

All three obstructions are read off from \eqref{eq:identity-m} in every dimension.
\begin{enumerate}[label=(\roman*),leftmargin=2em]
	\item \emph{Ulrich} ($H$ very ample, $f$ a linear projection). $f_*\cE$ is trivial by Proposition~\ref{prop:ESW}, so $v=v_a=0$, and Lemma~\ref{lem:maj-m} gives $v_b=0$: $\cE$ is $\mu$-semistable (cf.\ \cite[Thm.~2.9]{CH}). Hence $I\le0$, which is the first assertion of Proposition~\ref{prop:lopez}.
	\item \emph{aCM} ($H^i(X,\cE(t))=0$ for $0<i<m$ and all $t$). $f$ is flat by miracle flatness \cite[Thm.~23.1]{Mat}, so $f_*\cE$ is locally free, has no intermediate cohomology and splits by Horrocks' criterion \cite{Hor}, so $v=v_a$. The push-forward slopes of the Harder--Narasimhan factors are decreasing by \eqref{eq:rr-slope}, so Lemma~\ref{lem:maj-m} gives $v_b\le v_a$. Hence $I\le0$, which is Proposition~\ref{prop:acm-highdim}, now without restricting the bundle to a surface section (the Hodge index theorem is still applied on one, in Proposition~\ref{prop:rr-m}). For $m=2$, \eqref{eq:identity-m} is the defect identity of Proposition~\ref{prop:defect}.
	\item \emph{Local.} $v\ge0$ by Lemma~\ref{lem:variance-m} and $v_b\le\frac14$ after modifications. Hence $I\le3d$, which is Theorem~\ref{thm:local-m}.
\end{enumerate}
The gap between the graded and the local bound is always $3H^m$, the same $\frac14$ as for surfaces. Relative to $(m+1)H^m$, however, it shrinks: the ratio of the two bounds on $(K^2-2c_2)\cdot H^{m-2}$ is $\frac{m+4}{m+1}$.

\subsection{Isolated singularities in every dimension}

\begin{corollary}[cf.\ {\cite[Cor.~5.3, Rem.~5.5]{An2}} for the graded part]\label{cor:products}
Let $Y$ be a smooth projective surface, $H_Y$ an ample, globally generated divisor, $d_Y=H_Y^2$ and $\sigma=\sigma(Y)$. For $m\ge2$ let $X=Y\times\PP^{m-2}$ and $H=H_Y\boxtimes\OO(1)$. Then
\[
I(X,H)=3\bigl(\sigma-(m-1)^2d_Y\bigr),\qquad H^m=\binom m2d_Y .
\]
Consequently $(X,H)$ carries no aCM bundle if $\sigma>(m-1)^2d_Y$ (Proposition~\ref{prop:acm-highdim}). If
\[
\sigma\ >\ \frac{(m-1)(3m-2)}{2}\,d_Y,
\]
then the completion $\Rh(X,H)$ of the Segre product $R(Y,H_Y)\#\C[s_0,\dots,s_{m-2}]$ is a complete normal local domain of dimension $m+1$, with residue field $\C$ and an isolated singularity, that has no small CM module.
\end{corollary}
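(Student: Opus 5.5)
The plan is to compute the two intersection numbers directly on the product, and then to feed the result into Proposition~\ref{prop:acm-highdim} and Theorem~\ref{thm:local-m}. The ring-theoretic properties in the local statement will come from Proposition~\ref{prop:model-m}.

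\emph{Step 1: the intersection numbers.} Write $H_Y$ and $h$ also for their pull-backs to $X=Y\times\PP^{m-2}$. On $X$, a monomial $\xi\cdot h^k$ with $\xi$ a class of degree $2$ from $Y$ is nonzero only for $k=m-2$, and then it equals $\int_Y\xi$. Expanding $H^m=(H_Y+h)^m$, only the term $\binom m2H_Y^2h^{m-2}$ survives, so $H^m=\binom m2d_Y$.

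For the Chern classes I use $c(X)=c(Y)\,c(\PP^{m-2})$. This gives $K_X=K_Y-(m-1)h$ and
\[
c_2(X)=c_2(Y)-(m-1)K_Y h+\tbinom{m-1}2h^2 .
\]
The mixed terms $\mp2(m-1)K_Yh$ cancel in $K_X^2-2c_2(X)$, which leaves
\[
K_X^2-2c_2(X)=\bigl(K_Y^2-2c_2(Y)\bigr)+(m-1)h^2=3\sigma+(m-1)h^2 .
\]
Multiplying by $H^{m-2}=\sum_k\binom{m-2}kH_Y^kh^{m-2-k}$, the first summand picks out $h^{m-2}$ and gives $3\sigma$. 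The second picks out $H_Y^2h^{m-4}$ and gives $(m-1)\binom{m-2}2d_Y$; this term is absent when $m\le3$, which is consistent because the binomial vanishes there.

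Subtracting $(m+1)\binom m2d_Y$ then yields
\[
I(X,H)=3\sigma+\tfrac{m-1}{2}\bigl[(m-2)(m-3)-m(m+1)\bigr]d_Y=3\sigma-3(m-1)^2d_Y .
\]

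\emph{Step 2: the two consequences.} The divisor $H=H_Y\boxtimes\OO(1)$ is ample and globally generated, since both factors are. Proposition~\ref{prop:acm-highdim} excludes aCM bundles as soon as $I(X,H)>0$, which is exactly $\sigma>(m-1)^2d_Y$.

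Theorem~\ref{thm:local-m} excludes small CM modules over $\Rh(X,H)$ as soon as $I(X,H)>3H^m$. By Step~1 this inequality reads
\[
3\sigma-3(m-1)^2d_Y>\tfrac{3m(m-1)}2d_Y,
\]
that is, $\sigma>(m-1)\bigl(m-1+\tfrac m2\bigr)d_Y=\tfrac{(m-1)(3m-2)}2d_Y$.

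\emph{Step 3: the ring.} By the Künneth formula, $H^0(X,\OO_X(tH))=H^0(Y,tH_Y)\otimes H^0(\PP^{m-2},\OO(t))$. Hence $R(X,H)$ is the Segre product $R(Y,H_Y)\#\C[s_0,\dots,s_{m-2}]$. Proposition~\ref{prop:model-m}(b),(c) then shows that $\Rh$ is a complete normal local domain of dimension $m+1$ with residue field $\C$, regular away from the vertex. So the singularity is isolated, and it is genuinely singular, since a regular ring would itself be a small CM module.

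Nothing here is deep. The only step needing care is the bookkeeping in Step~1: the sign of the cross term in $c_2(X)$, and the fact that $c_2(\PP^{m-2})h^{m-4}$ contributes through $\binom{m-2}2$. I would check the formula against $m=2$, where it gives $I=3(\sigma-d_Y)$ as it must.
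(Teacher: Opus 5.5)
Your proof is correct and follows the paper's argument essentially step by step: the same intersection count, the same reduction to $I>0$ and $I>3H^m$ via Proposition~\ref{prop:acm-highdim} and Theorem~\ref{thm:local-m}, and the same appeal to the K\"unneth formula and Proposition~\ref{prop:model-m}. The only cosmetic difference is how you get $K_X^2-2c_2(X)=3\sigma+(m-1)h^2$. The paper reads it off from additivity of $2\ch_2$ on $T_X=\mathrm{pr}_1^*T_Y\oplus\mathrm{pr}_2^*T_{\PP^{m-2}}$, whereas you expand $c(Y)\,c(\PP^{m-2})$ and let the cross terms cancel.
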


\begin{proof}
Since $T_X=\mathrm{pr}_1^*T_Y\oplus\mathrm{pr}_2^*T_{\PP^{m-2}}$ and $K^2-2c_2=2\ch_2(T)$ is additive,
\[
K_X^2-2c_2(X)=\mathrm{pr}_1^*\bigl(K_Y^2-2c_2(Y)\bigr)+(m-1)\,\mathrm{pr}_2^*h^2,
\]
because $2\ch_2(T_{\PP^k})=\bigl((k+1)^2-(k+1)k\bigr)h^2=(k+1)h^2$. Expand
\[
H^{m-2}=\sum_j\binom{m-2}jH_Y^jh^{m-2-j}.
\]
 Only $j=0$ pairs nontrivially with a $0$-cycle pulled back from $Y$, and only $j=2$ with $h^2$. Hence $(K_X^2-2c_2(X))\cdot H^{m-2}=3\sigma+(m-1)\binom{m-2}2d_Y$ and $H^m=\binom m2d_Y$. Since $(m-1)\binom{m-2}2-(m+1)\binom m2=-3(m-1)^2$, this gives $I(X,H)$. The condition $I>3H^m$ reads $\sigma>\bigl((m-1)^2+\binom m2\bigr)d_Y=\frac{(m-1)(3m-2)}2d_Y$. The divisor $H$ is ample and globally generated, and $R(X,H)$ is the Segre product by the K\"unneth formula. Theorem~\ref{thm:local-m} and Proposition~\ref{prop:model-m}(c) conclude.
\end{proof}

For $m=2$ the two conditions are $\sigma>d_Y$ and $\sigma>2d_Y$. Table~\ref{tab:highdim} applies them to the families of Table~\ref{tab:families}, using that $H_n$ is ample and globally generated (indeed very ample) for all $n\ge2$ (Proposition~\ref{prop:hesse-va} and \cite[Lemmas~7.1, 7.3]{An1}).

\begin{table}[ht]
\centering
\small
\renewcommand{\arraystretch}{1.15}
\begin{tabular}{@{}lccccc@{}}
	\toprule
	& $m=2$ & $m=3$ & $m=4$ & $m=5$ & $m=6$\\
	\midrule
	Hesse, $\sigma/d_Y=(3n^2-11)/7$ & $3\,/\,3$ & $4\,/\,5$ & $5\,/\,7$ & $7\,/\,9$ & $8\,/\,10$\\
	dual Hesse, $\sigma/d_Y=(2n^2-13)/24$ & $5\,/\,6$ & $8\,/\,10$ & $11\,/\,14$ & $15\,/\,18$ & $18\,/\,23$\\
	quadrangle, $\sigma/d_Y=(n^2-10)/15$ & $6\,/\,7$ & $9\,/\,11$ & $13\,/\,16$ & $16\,/\,21$ & $20\,/\,25$\\
	\bottomrule
\end{tabular}
\caption{For $X=Y_n\times\PP^{m-2}$ and $H=H_n\boxtimes\OO(1)$, with $\sigma=\sigma(Y_n)$ and $d_Y=H_n^2$: the smallest exponent $n$ with $\sigma/d_Y>(m-1)^2$, so that $(X,H)$ has no aCM bundle (Proposition~\ref{prop:acm-highdim}) / the smallest $n$ with $\sigma/d_Y>(m-1)(3m-2)/2$, so that $\Rh(X,H)$, of dimension $m+1$, has no small CM module (Corollary~\ref{cor:products}). Both criteria are only sufficient, and since $\sigma/d_Y$ increases with $n$ they then hold for all larger $n$. The column $m=2$ is Table~\ref{tab:coverage}.}\label{tab:highdim}
\end{table}

\begin{corollary}\label{cor:alldim}
For every $m\ge2$ there is a complete Noetherian normal local domain of dimension $m+1$, with residue field $\C$ and an isolated singularity, that has no small Cohen--Macaulay module. For example, take $\Rh(Y_n\times\PP^{m-2},H_n\boxtimes\OO(1))$ for the Hesse surfaces $Y_n$, with $n$ such that $3n^2-11>\frac72(m-1)(3m-2)$.
\end{corollary}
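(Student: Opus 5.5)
The plan is to apply Corollary~\ref{cor:products} to the Hesse pairs $(Y_n,H_n)$, with $H_n=4A_n-E_n$, and to check two things: that its hypotheses hold, and that the numerical condition in it is exactly the one stated.

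By Proposition~\ref{prop:hesse-va}, $H_n$ is very ample for every $n\ge2$, hence ample and globally generated, so the hypotheses of Corollary~\ref{cor:products} on $(Y,H_Y)$ hold. By Theorem~\ref{thm:an1} (and Table~\ref{tab:families}), $d_Y=H_n^2=7n^9$ and $\sigma(Y_n)=(3n^2-11)n^9$. The criterion $\sigma>\frac{(m-1)(3m-2)}{2}d_Y$ of Corollary~\ref{cor:products} therefore reads
\[
3n^2-11>\frac72(m-1)(3m-2),
\]
which is the stated condition. Since the left side tends to infinity with $n$, such an $n$ exists for every $m$, and it can be taken $\ge3$.

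For $m\ge3$, Corollary~\ref{cor:products} then yields a complete normal local domain $\Rh(Y_n\times\PP^{m-2},H_n\boxtimes\OO(1))$ of dimension $m+1$, with residue field $\C$ and no small CM module. The isolated singularity comes from Proposition~\ref{prop:model-m}(c): $\Rh$ is regular away from the vertex. For $m=2$, $\PP^0$ is a point, so $X=Y_n$, and the condition becomes $3n^2-11>7$; this holds for all $n\ge3$, in agreement with Corollary~\ref{cor:hesse-local}.

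The only genuine work, namely Theorem~\ref{thm:local-m} and the intersection-number computation for the product, is already contained in Corollary~\ref{cor:products}. So the main obstacle is only bookkeeping: confirming that the formula $\frac{(m-1)(3m-2)}2=(m-1)^2+\binom m2$ matches the factor $\frac72$ after dividing by $n^9$.
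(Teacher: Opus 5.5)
Your proposal is correct and is the paper's argument: apply Corollary~\ref{cor:products} with $\sigma/d_Y=(3n^2-11)/7$ from Theorem~\ref{thm:an1}. One arithmetic slip: for $m=2$ the condition is $3n^2-11>14$ (that is, $\sigma>2d_Y$), not $3n^2-11>7$; it still holds for all $n\ge3$, so the conclusion is unaffected.
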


\begin{proof}
Corollary~\ref{cor:products} with $\sigma/d_Y=(3n^2-11)/7$ (Theorem~\ref{thm:an1}).
\end{proof}

The ball quotient $Y_3$ works here only for $m=2$, since $\sigma/d_Y=\frac{16}{7}<7$.

Unlike the rings of Corollary~\ref{cor:powerseries}, these have isolated singularities. For them a small CM module amounts to a nonzero vector bundle $\cV$ on the regular punctured spectrum $U$ with $H^i(U,\cV)=0$ for $1\le i\le m-1$ (take $M=\Gamma(U,\cV)$). This module is finitely generated: it is the module of global sections of the reflexive hull of any coherent extension of $\cV$ to $\Spec\Rh$, since $\Rh$ is normal and $\{\widehat\m\}$ has codimension $\ge2$. This is the analogue of Lemma~\ref{lem:reflexive}, using that maximal Cohen--Macaulay modules over the regular local rings of $U$ are free.

\begin{remark}[Contrast with characteristic $p$]\label{rem:charp}
Let $k$ be an algebraically closed field of characteristic $p>0$, $X$ a smooth connected projective $k$-variety of dimension $m\ge2$ and $H$ an ample Cartier divisor. Then $R(X,H)$ has a graded MCM module, and therefore its completion has a small CM module, by Lemma~\ref{lem:ladder}(b), whose proof (faithful flatness of completion) uses neither the dimension nor the characteristic. For $m=2$ this is the theorem of Hartshorne, Hochster and Peskine--Szpiro \cite[Thm.~1, Cor.~2]{Ho04}; in particular, by Lemma~\ref{lem:ladder}(b), the completion of every three-dimensional graded domain as in \cite[Cor.~2]{Ho04} has a small CM module. For $m\ge3$ it follows from \cite[Cor.~4.5(i)]{ShTa}, which characterizes the existence of a graded MCM module over $R(X,H)$ by the existence of a coherent sheaf on $X$ with MCM stalks, a role played here by $\OO_X$. Alternatively, for every $m\ge2$: since $X$ is Cohen--Macaulay, Serre vanishing and Serre duality show that $B=\{s\in\Z: H^i(X,\OO_X(sH))\ne0$ for some $0<i<m\}$ is finite, say $B\subset[-c,c]$. Let $F$ be the absolute Frobenius of $X$, choose $e$ with $p^e>2c+1$, and put $\cG=F^e_*\OO_X((c+1)H)$, a coherent sheaf since $k$ is perfect. As $F^{e*}\OO_X(H)=\OO_X(p^eH)$, the projection formula gives $\cG(t)=F^e_*\OO_X((c+1+p^et)H)$, and $c+1+p^et\notin[-c,c]$ for every $t\in\Z$. Hence $H^i(X,\cG(t))=0$ for $0<i<m$ and all $t$. Put $q=p^e$. Since $c+1+qt<0$ for $t<0$,
\[
\Gamma_*(\cG)=\bigoplus_{t\ge0}H^0\bigl(X,\OO_X((c+1+qt)H)\bigr),
\]
where $r\in R(X,H)_\ell$ acts by $s\mapsto r^qs$, of degree $t\mapsto t+\ell$ (projection formula). Thus $\Gamma_*(\cG)$ is the part of $F^e_*R(X,H)$ in degrees $\equiv c+1\pmod q$, regraded by $t$, hence finitely generated as $k$ is perfect. It is a nonzero graded MCM $R(X,H)$-module: $H^0_\m$ and $H^1_\m$ vanish as for any module of sections, and $H^{i+1}_\m(\Gamma_*(\cG))\iso\bigoplus_tH^i(X,\cG(t))=0$ for $1\le i\le m-1$.

Over $\C$ both statements fail, even after completion: for $m=2$ by Corollary~\ref{cor:hesse-local}, and for every $m\ge2$ by Corollary~\ref{cor:products}, since $Y_n\times\PP^{m-2}$ is smooth. Thus \cite{An2} shows that in characteristic zero Hochster's conjecture has no graded refinement, and Corollary~\ref{cor:alldim} shows that the local consequence of the graded theorem of characteristic $p$ fails as well, in every dimension. Likewise, the characteristic hypothesis of \cite[Cor.~4.5(i)]{ShTa} is essential not only for its graded statement, as shown in \cite[Cor.~5.4]{An2} with the threefolds $Y_n\times\PP^1$, $n\ge4$, but also for its local consequence, with $Y_n\times\PP^1$, $n\ge5$. The case $n=4$ lies in the gap of item~\ref{oq:highdim} of Section~\ref{sec:questions-open}. As in \cite{An2}, the step that has no analogue in characteristic $p$ is Bogomolov's inequality, used in Proposition~\ref{prop:rr-m}.
\end{remark}
\section{Provenance and status}\label{sec:status}

Figure~\ref{fig:citations} in the introduction records the citations. \cite{Lo} starts from the examples of \cite{An1}; \cite{An2} extends Lopez's inequality to aCM bundles; \cite{Ch} takes its polarized surface from \cite[Rem.~4.2, Cor.~4.3]{An2} and identifies its hypothesis $c>0$ with the graded criterion $d<\sigma$ of \cite{An2} (see \cite[Rem.~1.3]{Ch}), while its own argument does not use the graded theorem. \cite{OAI} attributes the surface inequality without Picard-rank hypothesis to \cite[Prop.~2.1, Rem.~2.3]{An1}, takes the complete-quadrangle construction and polarization from \cite[\S7.2]{An1}, compares with the graded results of \cite{An2}, and acknowledges that its existential claim overlaps with \cite{Ch}.

\subsection*{Status}
To the author's knowledge, none of the five texts has yet appeared in a refereed publication. \cite{An1}, \cite{Lo} and \cite{An2} present their projective and graded statements with complete proofs. \cite{Ch} presents its Main claim as a proposed argument circulated for verification, and states its consequence for Hochster's conjecture conditionally; Section~\ref{sec:chen} states its main steps without claiming independent verification, except that Proposition~\ref{prop:chen-chi} follows from Lemma~\ref{lem:rr} (Remark~\ref{rem:chen-v}). \cite{OAI} claims a complete proof. Its argument is short --- Sections~2--5 of the manuscript, summarized in Sections~\ref{sec:local} and~\ref{sec:oai} above --- and uses only standard tools. Section~\ref{sec:highdim} gives a complete proof of its main theorem, in every dimension. Sections~\ref{sec:synthesis} and~\ref{sec:highdim} represent the new part of this survey.

\section{Open questions}\label{sec:questions-open}

\begin{enumerate}[label=\arabic*.,ref=\arabic*,leftmargin=2em,itemsep=3pt]
	\item\label{oq:sharp} \textbf{The sharp local obstruction.} Does the existence of a small CM module over $\Rh(Y,H)$ force $H^2\ge\sigma(Y)$, as in the graded case? The argument of Lemma~\ref{lem:normalize}(b), a twist followed by modifications, places the normalized Harder--Narasimhan slopes of the restriction $P$ of a reflexive extension (Lemma~\ref{lem:reflexive}(a)) in $[\frac\lambda2-\frac12,\frac\lambda2+\frac12)$, $\lambda=K_Y\cdot H/H^2$, for every $(Y,H)$. By Remark~\ref{rem:chen-v}, it would then suffice that $\chi(P)\ge0$ for such an extension; this is the missing input, and \cite{Ch} claims it when $K\equiv4H$. The quadrangle ring with $n=6$ lies in the gap $d<\sigma<2d$.
	\item \textbf{Beauville's refinement.} Is there a smooth surface $S$ with $\rho(S)=1$ and a very ample $H$ with $H^2<\sigma(S)$?
	\item \textbf{Thresholds.} The Veronese subrings $R(Y_n,H_n)^{(\ell)}$ have no graded MCM module for $7\ell^2<3n^2-11$ and many for $\ell\ge\ell_0$ \cite[Rem.~6.1]{An2}. What happens in between, graded and locally?
	\item\label{oq:bs} \textbf{The Boij--S\"oderberg cone: the second question of Eisenbud and Schreyer.} Eisenbud and Schreyer asked for the Boij--S\"oderberg cone of cohomology tables of aCM bundles on the surfaces of \cite{An1} (see \cite[Acknowledgements]{An2}). For $H^2<\sigma(Y)$ there are no aCM bundles (Theorem~\ref{thm:acm}), so this cone is $\{0\}$, and their question becomes one about all vector bundles. By \cite[Thm.~5.3]{ES11}, the cone of nonnegative rational combinations of cohomology tables of coherent sheaves on $X\subset\PP^N$ coincides with that of $\PP^{\dim X}$ if and only if $X$ carries an Ulrich sheaf; on $\PP^2$ the tables of vector bundles are positive combinations of tables of supernatural bundles \cite{ES09}. For $H^2<\sigma(Y)$, the cone $C_{\mathrm{vb}}(Y,H)$ of nonnegative rational combinations of the tables of vector bundles on $(Y,H)$ contains no nonzero table with vanishing $h^1$-row \cite[Rem.~6.2]{An2}; in particular it misses the supernatural rays of $\PP^2$ with consecutive roots, those of the line bundles $\OO(a)$. What is $C_{\mathrm{vb}}(Y,H)$? For a coherent sheaf $F$ of rank $N>0$ on $\PP^2$ with $b=c_1(F)/N$, Remark~\ref{rem:chen-v} applied to $F(t)$ shows that the roots of $\chi(F(t))$ are $-b-\frac32\pm\frac12\sqrt{1-4v(F)}$; they are real, at distance $\sqrt{1-4v(F)}$, exactly when $v(F)\le\frac14$. For a $\mu_H$-semistable bundle $E$ on $Y$, Proposition~\ref{prop:synthesis} gives $v(f_*E)\le\frac14-\frac\sigma{4d}$, so the Hilbert polynomial of $E$ has real roots at distance at least $\sqrt{\sigma/d}>1$. Is $C_{\mathrm{vb}}(Y,H)$ cut out from the cone of $\PP^2$ by conditions of this kind, once unstable bundles (the term $v_b$) are taken into account? For $(Y,\ell H)$ the bound $\sqrt{\sigma/(\ell^2d)}$ drops below $1$ exactly when $\ell^2H^2>\sigma$, consistent with the existence of Ulrich bundles for $\ell\gg0$ \cite{CoHu}; how does the cone fill up as $\ell$ grows?
	\item\label{oq:highdim} \textbf{Higher dimension.} Theorem~\ref{thm:local-m} gives $I(X,H)\le3H^m$ for small CM modules over $\Rh(X,H)$, while aCM bundles require $I(X,H)\le0$. For the products of Corollary~\ref{cor:products} this leaves the range $(m-1)^2<\sigma(Y)/H_Y^2\le(m-1)(3m-2)/2$ open, e.g.\ the Hesse pairs with $(m,n)=(3,4)$. Is $I(X,H)\le0$ necessary locally, as item~\ref{oq:sharp} asks for surfaces? Does Chen's argument extend beyond surfaces?
	\item \textbf{Mixed characteristic.} After spreading out, the reductions modulo primes of good reduction of the rings above have graded MCM modules, hence their completions have small CM modules (Remark~\ref{rem:charp}); Bogomolov's inequality is the step that fails. Are there analogous counterexamples in mixed characteristic, where the conjecture was originally motivated by Serre's positivity of intersection multiplicities?
	\item \textbf{A characterization.} For which smooth projective surfaces $Y$ and ample, globally generated $H$ does $\Rh(Y,H)$ admit a small CM module? By Theorem~\ref{thm:oai2}, $\sigma\le2H^2$ is necessary, and by Coskun--Huizenga existence holds for $(Y,\ell H)$ with $\ell\gg0$.
	\item \textbf{Non-graded examples.} Are there complete local domains without small CM modules that are not completions of graded rings, in particular not of the form $\Rh(X,H)$?
\end{enumerate}

\section*{Statements and declarations}
\noindent\textbf{Use of generative AI.} ChatGPT and Claude were used for exploratory computations, literature search and editorial assistance, including drafting of the proofs in Sections~\ref{sec:synthesis} and~\ref{sec:highdim}. The author is solely responsible for the final text.

\end{document}